\documentclass[a4paper,12pt]{article}
\usepackage[utf8]{inputenc}
\usepackage{amsmath}
\usepackage{amssymb}
\usepackage{amsthm}
\usepackage[arrow, matrix, curve]{xy}
\usepackage{color}
\usepackage{enumerate}
\usepackage{a4wide}
\usepackage{hyperref}

\makeatletter

\DeclareMathOperator{\GL}{GL}
\DeclareMathOperator{\gl}{\mathfrak{gl}}
\DeclareMathOperator{\PGL}{PGL}

\DeclareMathOperator{\PSL}{PSL}
\let\sl\relax
\DeclareMathOperator{\sl}{\mathfrak{sl}}
\DeclareMathOperator{\upO}{O}
\DeclareMathOperator{\SO}{SO}
\DeclareMathOperator{\so}{\mathfrak{so}}

\let\sp\relax
\DeclareMathOperator{\sp}{\mathfrak{sp}}

\DeclareMathOperator{\su}{\mathfrak{su}}

\DeclareMathOperator{\upU}{U}
\DeclareMathOperator{\Str}{Str}
\DeclareMathOperator{\str}{\mathfrak{str}}
\DeclareMathOperator{\Aut}{Aut}
\DeclareMathOperator{\aut}{\mathfrak{aut}}
\DeclareMathOperator{\Co}{Co}
\DeclareMathOperator{\co}{\mathfrak{co}}

\newcommand{\fraka}{\mathfrak{a}}
\newcommand{\frake}{\mathfrak{e}}
\newcommand{\frakf}{\mathfrak{f}}
\newcommand{\frakg}{\mathfrak{g}}

\newcommand{\frakk}{\mathfrak{k}}
\newcommand{\frakl}{\mathfrak{l}}
\newcommand{\frakm}{\mathfrak{m}}

\newcommand{\frakp}{\mathfrak{p}}
\newcommand{\frakq}{\mathfrak{q}}
\newcommand{\fraks}{\mathfrak{s}}
\newcommand{\frakt}{\mathfrak{t}}
\newcommand{\fraku}{\mathfrak{u}}

\newcommand{\CC}{\mathbb{C}}

\newcommand{\FF}{\mathbb{F}}
\newcommand{\HH}{\mathbb{H}} 

\newcommand{\OO}{\mathbb{O}}

\newcommand{\RR}{\mathbb{R}}
\newcommand{\ZZ}{\mathbb{Z}}

\newcommand{\calB}{\mathcal{B}}

\newcommand{\calF}{\mathcal{F}}
\newcommand{\calG}{\mathcal{G}}
\newcommand{\calH}{\mathcal{H}}

\newcommand{\calO}{\mathcal{O}}

\newcommand{\calS}{\mathcal{S}}

\newcommand{\0}{\textbf{0}}

\DeclareMathOperator{\Ind}{Ind}

\DeclareMathOperator{\tr}{tr}
\DeclareMathOperator{\rank}{rank}
\DeclareMathOperator{\ad}{ad}
\DeclareMathOperator{\Ad}{Ad}

\DeclareMathOperator{\End}{End}

\DeclareMathOperator{\Sym}{Sym}
\DeclareMathOperator{\Herm}{Herm}
\DeclareMathOperator{\Skew}{Skew}

\DeclareMathOperator{\id}{id}
\DeclareMathOperator{\sgn}{sgn}
\DeclareMathOperator{\triv}{triv}

\renewcommand\Re{\operatorname{Re}}
\renewcommand\Im{\operatorname{Im}}

\renewcommand{\min}{{\textup{min}}}
\DeclareMathOperator{\otimeshat}{\widehat{\otimes}}

\theoremstyle{plain}
\newtheorem{theorem}{Theorem}[section]
\newtheorem{proposition}[theorem]{Proposition}
\newtheorem{lemma}[theorem]{Lemma}
\newtheorem{corollary}[theorem]{Corollary}

\newtheorem{fact}[theorem]{Fact}

\newtheorem{thmalph}{Theorem}

\theoremstyle{definition}

\newtheorem{remark}[theorem]{Remark}

\numberwithin{equation}{section}

\title{Exceptional theta correspondences via Plancherel formulas for rank one symmetric spaces}

\author{Jan Frahm and Quentin Labriet}
\date{\small\today}

\begin{document}
	
	\maketitle
	
	\begin{abstract}
		We consider the minimal representation of (a finite cover of) the conformal group of a simple split Jordan algebra over $\RR$ or $\CC$, whenever it exists. The conformal group contains a natural dual pair $G\times G'$, where $G$ is essentially the automorphism group of the Jordan algebra and $G'$ is either $\PSL(2,\RR)$, $\PGL(2,\RR)$ or $\PGL(2,\CC)$. The groups $G$ that arise in this way include the complex exceptional group of type $F_4$ as well as its compact and split real form.\\
        We explicitly determine the direct integral decomposition of the minimal representation restricted to the corresponding cover of $G\times G'$. This yields a one-to-one correspondence between certain representations of $G$ and (a finite cover of) $G'$. The representations of $G$ that occur in this correspondence are in the support of the Plancherel measure for a rank one symmetric space for $G$, and the proof makes use of the corresponding Plancherel formula.
	\end{abstract}

\small\textit{2020 Mathematics Subject Classification:} Primary 22E46; Secondary 43A85.

\small\textit{Keywords:} exceptional theta correspondence, minimal representation, Jordan algebra, conformal group, automorphism group.

%\cleardoublepage

\setcounter{tocdepth}{1}

%\tableofcontents

%\cleardoublepage

\section*{Introduction}

The classical theta correspondence is a one-to-one correspondence between certain representations of two groups $G$ and $G'$ that form a dual pair inside a symplectic group. It is obtained by restricting the metaplectic representation of the metaplectic group, a double cover of the symplectic group, to the product of $G$ and $G'$ (or rather their corresponding coverings). The groups $G$ and $G'$ that can occur in this way are all classical groups.

In order to also obtain correspondences for exceptional groups, and also to enlarge the general framework of the theta correspondence, one can replace the metaplectic representation by a minimal representation of another simple Lie group and consider its restriction to dual pairs. This approach has received quite some attention during the past years, and many \emph{exceptional theta correspondences} have been studied. Most results in the literature are for dual pairs where one member is compact (see e.g. \cite{GS98,HPS96,Li96,Lok99,Lok00,Lok03,RS95}), and there are only few results, mostly partial, where both members are non-compact (see e.g. \cite{GLPS25,GS03,Li97,Li99,LS07,LS19}). Moreover, in contrast to the classical theta correspondence, which is a general statement for all dual pairs inside symplectic groups, most exceptional theta correspondence are studied case-by-case, i.e. for a specific dual pair inside a particular simple Lie group.

In this paper, we obtain novel exceptional theta correspondences for a family of dual pairs inside the conformal groups of simple Jordan algebras. The first member $G$ of the dual pairs is essentially the automorphism group of the Jordan algebra, which is often non-compact. The second member $G'$ is either $\PSL(2,\RR)$, $\PGL(2,\RR)$ or $\PGL(2,\CC)$. By using the $L^2$-model for the minimal representation of the conformal group, we relate the theta correspondence to the Plancherel formula for a rank one symmetric space for the automorphism group. Our correspondences can be seen as an archimedean analogue of some results by Savin~\cite{Sav94} for $p$-adic groups.

\paragraph{Statement of the results.} Let $V$ be a split simple Jordan algebra over $\FF=\RR$ or $\CC$ of rank at least two. There is a finite covering of the conformal group $\calG$ of $V$ which has a minimal representation $\Pi_\min$ (unless $\calG=\SO(p,q)$ with $p+q$ odd). The groups obtained in this way are listed in Section~\ref{sec:Tables}. The list includes the complex exceptional group of type $E_7$ as well as its Hermitian and split real forms $E_{7(-25)}$ and $E_{7(7)}$. Some groups $\calG$ have more than one minimal representation; we focus on one of them (see Section~\ref{sec:MinRep} for this choice).

Inside $\calG$ there is a natural dual pair $G\times G'$, where $G$ is essentially the automorphism group of $V$. If $V$ is real and Euclidean, then $G$ is compact and $G'\simeq\PSL(2,\RR)$, and if $V$ is real and non-Euclidean or complex, then $G$ is non-compact and $G'\simeq\PGL(2,\RR)$ or $\PGL(2,\CC)$. For instance, for $\calG$ of type $E_7$, the group $G$ is of type $F_4$, either compact, split or complex. In all cases, the restriction of the minimal representation $\Pi_\min$ to the corresponding finite cover of $G\times G'$ factors through $G\times\widetilde{G'}$ for some finite cover $\widetilde{G'}$ of $G'$.

Our first main result relates the restriction of the minimal representation $\Pi_\min$ to $G\times\widetilde{G'}$ to the Plancherel formula for a certain symmetric space for $G$. More precisely, we let $\calO_G$ denote the $G$-orbit of a primitive idempotent in $V$. It turns out to be a symmetric space for $G$ of split rank one, compact if $V$ is Euclidean and non-compact otherwise (see Lemma~\ref{lem:OHisSymmetricSpace} and Lemma~\ref{lem:MaxTorus}).

\begin{thmalph}[see Theorem~\ref{thm:AbstractTheta} and Section~\ref{sec:ExplicitTheta}]\label{introthmA}
    Let
    $$ L^2(\calO_G) \simeq \int_{\widehat{G}}\pi\,d\mu(\pi) $$
    denote the direct integral decomposition of the left regular representation of $G$ on $L^2(\calO_G)$ into irreducible unitary representations of $G$, then the restriction of $\Pi_\min$ to $G\times\widetilde{G'}$ decomposes as
    $$ \Pi_\min|_{G\times\widetilde{G'}} \simeq \int_{\widehat{G}}\pi\boxtimes\theta(\pi)\,d\mu(\pi) $$
    with $\theta(\pi)$ an irreducible unitary representation of $\widetilde{G'}$, and the map $\pi\mapsto\theta(\pi)$ is one-to-one (possibly ignoring a set of measure zero).
\end{thmalph}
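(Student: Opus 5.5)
We work in the $L^2$-model of $\Pi_\min$, realized on $L^2(\calO_{\min},d\nu)$, where $\calO_{\min}\subseteq V$ is the minimal nonzero orbit of the structure group (the rank one elements) and $\nu$ is the equivariant measure on it. In this model the structure group $\Str(V)$ acts geometrically, up to a character, and the group $G\subseteq\Aut(V)$ therefore acts by $\Pi_\min(g)f(x)=f(g^{-1}x)$.

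The first step is to write $\calO_{\min}$ as a fibration over the $G$-orbit $\calO_G$. Every rank one element is of the form $x=t\,gc$, where $c$ is a fixed primitive idempotent, $g\in G$ and $t$ is a scalar. To see this, use $\tr(x)$ on the open dense set $\{\tr(x)\neq0\}$: there $x/\tr(x)$ is a primitive idempotent, and one checks via Lemma~\ref{lem:OHisSymmetricSpace} that it lies in $\calO_G$, or in one of finitely many such orbits. The complement is a null set. This gives a measure isomorphism
$$ L^2(\calO_{\min})\simeq L^2(\calO_G)\otimes L^2(\FF^\times,|t|^{\alpha}dt) $$
for a suitable exponent $\alpha$, and $G$ acts only on the first factor.

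The second step is to identify the $\widetilde{G'}$-action. The group $G'$ is generated by three pieces. The diagonal torus acts by dilations $x\mapsto tx$. The unipotent elements act by multiplication with characters $e^{i\tr(x)\cdot s}$, since the translations by multiples of the identity $e$ act in the $L^2$-model by $e^{i(x|se)}$. The Weyl element acts by the conformal inversion, i.e. the Bessel-type unitary operator $\Pi_\min(j)$. The first two commute with $G$ and preserve the fibration, so they act only on the $t$-variable. The key point is that $G$ and $G'$ commute, so $\Pi_\min(j)$ commutes with $G$. By Schur/direct integral theory it therefore preserves the decomposition
$$ L^2(\calO_{\min})\simeq\int_{\widehat G}\pi\otimes\mathcal{M}_\pi\,d\mu(\pi), $$
where $\mathcal{M}_\pi=\operatorname{Hom}_G(\pi,\cdot)$ is identified with the multiplicity space of $\pi$ in $L^2(\calO_G)$ tensored with $L^2(\FF^\times)$. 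For a rank one symmetric space the multiplicities are essentially one, i.e. the space is multiplicity free, possibly after splitting off finitely many orbits and the $\pm t$ components. Hence $\mathcal{M}_\pi\simeq L^2(\FF^\times,|t|^\alpha dt)$, and $\widetilde{G'}$ acts on it. We set $\theta(\pi)$ to be this representation.

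The third step, which I expect to be the main obstacle, is to show that $\theta(\pi)$ is irreducible and that $\pi\mapsto\theta(\pi)$ is injective. I would compute the action of $j$ on each fibre explicitly. Restricted to $\pi$-isotypic functions $f(t\,gc)=\phi(t)\,\psi_\pi(g)$, where $\psi_\pi$ is a spherical function on $\calO_G$, the inversion should become a Hankel-type transform in $t$. Its order is determined by the spectral parameter of $\pi$, namely the Casimir eigenvalue on $\calO_G$. This identifies $\theta(\pi)$ with a concrete principal or discrete series representation in a Kirillov-type model on $L^2(\FF^\times)$. A Kirillov model is irreducible, and the Casimir of $\mathfrak{g}'$ acts on $\theta(\pi)$ through an explicit affine function of the Casimir of $G$ on $\pi$; this gives irreducibility and injectivity up to the finitely many parameters where that function fails to be injective. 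To obtain the Casimir relation I would compute the Casimir of $\mathfrak{g}'$ in the $L^2$-model using the Bessel operators. On functions of the form $\phi(t)\psi(gc)$ it should reduce to a second order operator in $t$ plus a multiple of the Laplace–Beltrami operator of $\calO_G$. This is where the Plancherel formula for $\calO_G$ enters, since it tells us which spectral parameters occur.
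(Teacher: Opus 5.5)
Your first two steps and the Casimir computation at the end of your third step follow the paper's route:
\begin{itemize}
\item the polar decomposition $L^2(\calO)\simeq L^2(\calO_G)\otimeshat L^2(\FF^\times,|t|^{\delta\frac{rd}{2}}\,d^\times t)$;
\item the parabolic $P'$ acting only on the radial variable, hence irreducibly on each multiplicity space;
\item a Bessel-operator computation showing that the remaining generator of $\frakg'$ acts as a second order operator in $t$ plus $8\xi^{-1}$ times the Casimir of $G$ (Lemma~\ref{lem:KeyLemma}).
\end{itemize}
Knowing the Casimir of $\frakg'$ together with the $P'$-action is equivalent to this, since $F$ acts by a nowhere vanishing multiplier.

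\textbf{The gap is injectivity.} You claim the Casimir relation gives injectivity except at finitely many parameters. For $V$ real non-Euclidean this fails. The continuous spectrum consists of $\pi_{0,\lambda}$ and $\pi_{1,\lambda}$, which have the same Casimir eigenvalue for \emph{every} $\lambda$ (Lemma~\ref{lem:CasimirEigenvalues}). On the other side, $\tau_{0,\nu}$ and $\tau_{1,\nu}$ have identical restrictions to $P'$ and identical infinitesimal action on $C_c^\infty(\RR^\times)$ (Remark~\ref{rem:AbstractTheta}). Everything you propose to compute is therefore blind to the parity, and you cannot exclude $\theta(\pi_{0,\lambda})\simeq\theta(\pi_{1,\lambda})$ on a set of full measure.

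The Hankel-transform variant does not repair this as stated. The Weyl element is exactly where $\tau_{0,\nu}$ and $\tau_{1,\nu}$ differ, so its fibrewise kernel cannot be determined by the Casimir eigenvalue alone. You also give no way to compute $\Pi_\min(j)$ on ``isotypic'' functions, which for the continuous spectrum are spherical distributions rather than $L^2$-functions.

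\textbf{How the paper closes it.} The missing ingredient is a $K\times K'$-type argument (Lemma~\ref{lem:KtypeAnalysisNonEuclidean} and Corollary~\ref{cor:ExplicitThetaNonEuclidean}).
\begin{enumerate}
\item $\Pi_\min|_{K\times K'}$ contains $W_0\boxtimes\CC_{\triv}$ and $W_1\boxtimes\CC_{\sgn}$. For $r>2$ these are the trivial $K$-type and $V^-_\CC\subseteq\frakp_\CC$.
\item $W_\xi$ occurs in $\pi_{\xi,\lambda}$ but not in $\pi_{\xi+1,\lambda}$. For $r>2$ this is Frobenius reciprocity, using $m_0$ from Lemma~\ref{lem:MHquotientForNonEuclidean} and the elements $\exp(\pi(c_j,0,-c_j))$. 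For $r=2$ it is a branching law, with the roles swapped when $p-q\equiv2\pmod4$.
\item Discrete series of $\PGL(2,\RR)$ contain no one-dimensional $K'$-type, and $\tau_{\eta,\nu}$ contains $\CC_{\triv}$ resp.\ $\CC_{\sgn}$ only for $\eta=0$ resp.\ $\eta=1$. This fixes $\eta$ on a set of $\lambda$ of positive measure.
\item Holomorphic dependence on $\lambda$ of the intertwiners given by spherical distributions extends this to almost all $\lambda$.
\end{enumerate}

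\textbf{Smaller points.}
\begin{itemize}
\item Multiplicity one is not a general feature of rank one symmetric spaces; the continuous spectrum of $\SO_0(2,1)/\SO_0(1,1)$ has multiplicity two. It must be read off from the explicit Plancherel formulas together with $M/M_c$.
\item Your trace argument needs $G$ to act transitively on the primitive idempotents in $\calO$. Lemma~\ref{lem:OHisSymmetricSpace} does not give this. It is not a null-set issue, since a second open orbit would enlarge the multiplicity spaces and destroy irreducibility of $\theta(\pi)$. The paper obtains it from the open $Q_1$-orbit in $L/G$ (Proposition~\ref{prop:FOHopendense}).
\item For $\FF=\CC$ one real Casimir does not separate the $\pi_{k,\lambda}$. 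You need both invariants $C$ and $D$, equivalently the complex-valued holomorphic Casimir.
\end{itemize}
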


By making the Plancherel formula for $\calO_G$ explicit, we obtain an explicit exceptional theta correspondence. The statement involves the two structure constants $r$ and $d$ of $V$ which are introduced in Section~\ref{sec:ConfGrpAlg} and listed in Section~\ref{sec:Tables} for all cases. The notation for representations of $G$ is explained in Section~\ref{sec:ExplicitTheta} and for representations of $G'$ and $\widetilde{G'}$ in Section~\ref{sec:RepPGL2}.

\begin{thmalph}[see Corollaries~\ref{cor:ExplicitThetaEuclidean}, \ref{cor:ExplicitThetaNonEuclidean} and \ref{cor:ExplicitThetaComplex}]\label{introthmB}
    \begin{enumerate}[(1)]
        \item If $V$ is real and Euclidean, then 
        $$ L^2(\calO_G) \simeq \widehat{\bigoplus_{k\in2\ZZ_{\geq0}}} \pi_{k\beta}, $$
        where $\pi_{k\beta}$ is the irreducible finite-dimensional representation of $G$ of highest weight $k\beta$ ($\beta$ and/or $2\beta$ are certain restricted roots, see Proposition~\ref{prop:RootSystem}). Moreover,
        $$ \theta(\pi_{k\beta})=\tau_{k+\frac{rd}{2}}^{\textup{hds}} $$
        is the holomorphic discrete series representation of a finite cover of $G'=\PSL(2,\RR)$ of parameter $k+\frac{rd}{2}>1$.
        \item If $V$ is real and non-Euclidean, then
        $$ L^2(\calO_G) \simeq \bigoplus_{\xi\in\ZZ/2\ZZ}\int^\oplus_{i\RR_+}\pi_{\xi,\lambda}\,d\lambda \oplus \widehat{\bigoplus_{k+\frac{rd}{2}\in2\ZZ_{>0}}} A_{\frakq}(k\beta), $$
        where $\pi_{\xi,\lambda}$ are certain (degenerate) principal series representations of $G$ and $A_{\frakq}(k\beta)$ Zuckerman's derived functor modules for some parabolic subalgebra $\frakq$ of $\frakg_\CC$. Moreover,
        $$ \theta(\pi_{\xi,\lambda})=\tau_{\xi,\lambda/2} \qquad \mbox{and} \qquad \theta(A_{\frakq}(k\beta))=\tau^{\textup{ds}}_{k+\frac{rd}{2}}, $$
        where $\tau_{\xi,\nu}$ is a unitary principal series representation of $G'=\PGL(2,\RR)$ and $\tau_m^{\textup{ds}}$ is the discrete series representation of parameter $m>1$ (except for the case $\calG=\SO(p+1,q+1)$ with $p-q\equiv2\pmod4$ where $\theta(\pi_{\xi,\lambda})=\tau_{\xi+1,\lambda/2}$).
        \item If $V$ is complex, then
        $$ L^2(\calO_G) \simeq \widehat{\bigoplus_{m\in\ZZ}} \int^\oplus_{i\RR_+} \pi_{m,\lambda}\,d\lambda, $$
        where $\pi_{m,\lambda}$ are certain (degenerate) principal series representations of $G$. Moreover,
        $$ \theta(\pi_{m,\lambda})=\tau_{m,\lambda/2} $$
        is a unitary principal series representation of $G'=\PGL(2,\CC)$.
    \end{enumerate}
\end{thmalph}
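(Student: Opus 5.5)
The plan is to derive Theorem~\ref{introthmB} from Theorem~\ref{introthmA} in two steps: make the Plancherel formula for the rank one symmetric space $\calO_G$ explicit, and then identify each representation $\theta(\pi)$ of $\widetilde{G'}$ by an infinitesimal character computation together with a knowledge of its $K'$-types.

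\emph{Step 1: the Plancherel decomposition of $L^2(\calO_G)$.} Lemma~\ref{lem:OHisSymmetricSpace} and Lemma~\ref{lem:MaxTorus} show that $\calO_G=G/H$ is a symmetric space of split rank one, and Proposition~\ref{prop:RootSystem} describes its restricted root system, with roots $\beta$ and/or $2\beta$. Using this, I would proceed case by case.
\begin{enumerate}[(1)]
\item In the Euclidean case $G/H$ is a compact rank one symmetric space. The Cartan--Helgason theorem then gives the spherical representations: their highest weights are $k\beta$, with $k$ subject to a parity condition that the root multiplicities turn into $k\in2\ZZ_{\geq0}$.
\item In the real non-Euclidean case I would use the Plancherel formula for non-Riemannian rank one symmetric spaces, due to Faraut, Kosters, and Oshima--Matsuki. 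Its continuous part consists of the principal series induced from a minimal $\sigma\theta$-stable parabolic $P=MAN$ with $\dim A=1$. The index $\xi\in\ZZ/2\ZZ$ records the two characters of $M\cap H$ (equivalently, the two $H$-orbits on the open $P$-orbits). Its discrete part consists of the $A_{\frakq}(k\beta)$, which by Oshima--Matsuki are the discrete series for $G/H$; their parameters must satisfy $k+\frac{rd}{2}\in2\ZZ_{>0}$, which I would read off from the positivity and integrality conditions on the $\rho$-shifted parameter.
\item In the complex case $G/H$ is the complexification of the compact case. Here there is no discrete spectrum, and the $\ZZ$ worth of $M$-characters supplies the index $m$.
\end{enumerate}

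\emph{Step 2: identifying $\theta(\pi)$.} Theorem~\ref{introthmA} comes with an intertwiner built from the $L^2$-model of $\Pi_\min$: the minimal orbit fibres over $\calO_G$, with fibre a cone or line on which $\widetilde{G'}$ acts. The central tool is the Casimir identity. In the Schrödinger model, the Casimir of $\frakg'$ acts through the second-order Bessel-type operators, and restricted to functions on $\calO_G\times(\text{fibre})$ it should equal an affine function of the Laplacian of $\calO_G$. Concretely I expect
$$ C_{\frakg'} = \tfrac14\,\Delta_{\calO_G} + \text{const}, $$
with the constant depending on $r$ and $d$. Applying this on each $\pi$-isotypic piece, the eigenvalue of $\Delta_{\calO_G}$ (determined by $\lambda$ or by $k$) fixes the infinitesimal character of $\theta(\pi)$. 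This accounts for the parameter $\lambda/2$ and for the shift $k+\frac{rd}{2}$. The infinitesimal character does not separate principal series from discrete series, nor does it detect the sign character $\xi$. To pin down the representation itself, I would determine the $K'$-types. For this I would use the action of $K'$ (rotations, or $\upO(2)$ in the $\PGL$ case) on the fibre variable, or else the explicit lowest weight vector $e^{-\tr x}$-type functions in the Euclidean case. This shows that $\theta(\pi_{k\beta})$ has lowest $K'$-type $k+\frac{rd}{2}$, hence is holomorphic discrete series. In the non-Euclidean case it shows that discrete spectrum pairs with discrete series.

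\emph{Main obstacle.} I expect the hardest part to be matching the sign/$\xi$ labels. This means tracking how the central and component-group elements of $\widetilde{G'}$, for instance the element $\mathrm{diag}(1,-1)\in\PGL(2,\RR)$, act through the Weyl element of the dual pair on functions on each $H$-orbit, including the anomalous shift $\xi\mapsto\xi+1$ for $\SO(p+1,q+1)$ with $p-q\equiv2\pmod4$. My first attempt would be to compute the action of $\mathrm{diag}(1,-1)$ as $f(x)\mapsto f(-x)$ times a character determined by the cover, and to evaluate that character explicitly from the realization of $\Pi_\min$ in Section~\ref{sec:MinRep}.
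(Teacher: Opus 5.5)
Your Step 1 and the Casimir identity in Step 2 follow the paper's route: Cartan--Helgason, the Molchanov/van den Ban Plancherel formula with Matsuki--Oshima discrete series parameters, and a weak form of Lemma~\ref{lem:KeyLemma}. The gap is in your resolution of the ``main obstacle'', the $\xi$-labels in the non-Euclidean case, which would fail.

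\textbf{The $\xi$-labels.} The element $\mathrm{diag}(1,-1)\in\PGL(2,\RR)$ is $-\id_V$. It lies in $P'$ and acts on $L^2(\calO)$ simply by $f(x)\mapsto f(-x)$, with no extra character. By Lemma~\ref{lem:IntertwinerQandH} this matches $\tau_{m,\nu}(-\id_V)$ for \emph{both} $m\in\ZZ/2\ZZ$. More generally, $\tau_{0,\nu}$ and $\tau_{1,\nu}$ have the same restriction to $P'$ and the same derived action on $C_c^\infty(\RR^\times)$ (Remark~\ref{rem:AbstractTheta}). They differ only in how $K'$-finite vectors glue across $\zeta=0$, i.e.\ across the two cones $\pm\RR_+\calO_G$. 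So this computation cannot see $\xi$. Nor can an ``action of $K'$ on the fibre variable'', because there is none: $K'\not\subseteq P'$, and $E-F$ acts through the Bessel operator, which couples to $d\ell(C)$.

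What is missing is global input on $\Pi_\min|_{K\times K'}$. The paper takes known $K$-types of $\Pi_\min$: the trivial one and $\frakp_\CC$ for $r>2$, and $\calH^k(\RR^{p+1})\boxtimes\calH^{k+\frac{p-q}{2}}(\RR^{q+1})$ for $r=2$. Restricting these, it exhibits $W_0\boxtimes\CC_{\triv}$ and $W_1\boxtimes\CC_{\sgn}$. It then shows by Frobenius reciprocity that $W_\xi$ is a $K$-type of $\pi_{\xi,\lambda}$ only, using that $m_0$ acts by $-1$ on $V_{12}^-$ and the elements $m_j=\exp(\pi(c_j,0,-c_j))$, $j\geq3$. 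The exceptional shift for $p-q\equiv2\pmod4$ comes from the parity of $\frac{p-q}{2}$ in this branching. The discrete series of $\PGL(2,\RR)$ have no one-dimensional $K'$-types, so these vectors lie in the continuous spectrum. This fixes the label for a set of $\lambda$ of positive measure. Holomorphy in $\lambda$ of the projecting intertwiners (Molchanov's spherical distributions) then extends it to almost all $\lambda$.

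\textbf{The complex case.} Here $\frakg$ and $\frakg'$ each have two independent quadratic invariants. A single identity $C_{\frakg'}=\tfrac14\Delta_{\calO_G}+\mathrm{const}$ only determines $\nu^2+m^2$. But $\tau_{m,\nu}$ and $\tau_{-m,\nu}$ share this value and have the same $K'$-types, since the torus weights $\pm m$ are Weyl-conjugate in $K'$. Meanwhile $\pi_{m,\lambda}\not\simeq\pi_{-m,\lambda}$ for $m\neq0$, so the ambiguity matters. You need the second invariant $D$ together with the companion formula for $d\Pi_\min(ie,0,0)$ in Lemma~\ref{lem:KeyLemma}, which gives $d\pi(D)=-\tfrac14 im\nu$.

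\textbf{Smaller points.}
\begin{enumerate}
\item For $d=1$ the root system is $\{\pm\beta\}$, so Cartan--Helgason integrality only gives $k\in\ZZ_{\geq0}$. Evenness comes from $\exp(tT_0)\in G_c\iff t\in\pi\ZZ$ (Lemma~\ref{lem:TorusStabilizer}), not from root multiplicities.
\item $\xi$ indexes characters of $M/M_c=\{1,m_0\}$, not characters of $M\cap H$.
\item Among infinite-dimensional unitary representations of $\PGL(2,\RR)$, the infinitesimal character \emph{does} single out the discrete series, whose parameter $\nu=\frac{1-k}{2}$ is real. This is exactly how the paper identifies $\theta(A_{\frakq}(k\beta))$, so no $K'$-type argument is needed there.
\end{enumerate}
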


\paragraph{Method of proof.}

We use the $L^2$-model of $\Pi_\min$ that was constructed by Vergne--Rossi~\cite{VR76} for $V$ Euclidean and by Dvorsky--Sahi~\cite{DS99} and Kobayashi--{\O}rsted~\cite{KO03c} for the other cases. In this model, the representation space is $L^2(\calO)$, where $\calO\subseteq V$ is the subvariety of rank one elements (with an additional positivity condition if $V$ is Euclidean). The restriction of $\Pi_\min$ to $G$ is the left regular representation of $G$ on $L^2(\calO)$. We show that $\calO$ contains an open dense subset of the form $\RR_+\times\calO_G$, $\RR^\times\times\calO_G$ or $\CC^\times\times\calO_G$ for $V$ real Euclidean, real non-Euclidean and complex, respectively (see Proposition~\ref{prop:FOHopendense}). This relates the decomposition of $\Pi_\min|_{G\times\widetilde{G'}}$ to the Plancherel formula for $\calO_G$. To also keep track of the action of $G'$, we use the Lie algebra representation which was studied in \cite{HKM14}. The key computation is Lemma~\ref{lem:KeyLemma} where the underlying Lie algebra representation of $G'$ on each $G$-isotypic component is determined. This essentially proves Theorem~\ref{introthmA}.

To show Theorem~\ref{introthmB}, we make the Plancherel formula for $\calO_G$ explicit using results from the literature (see e.g. \cite{Mol92,vdB05}). In the case where $V$ is real and non-Euclidean, a few additional arguments using $K$-types and intertwining operators are necessary to obtain the theta correspondence (see Remark~\ref{rem:AbstractTheta} and Section~\ref{sec:PlancherelNonEuclidean}).

We remark that some parts of this paper could be significantly shortened using case-by-case computations instead of the framework of Jordan algebras. However, in this way, we also treat the exceptional groups along the same lines.

\paragraph{Relation to other work.}

For $V$ Euclidean, our result can mostly be reduced to the classical theta correspondence and is well-known (see e.g. \cite{KV78}). We decided to include this case, because the proof is structurally identical to the other cases, the main difference being that $\calO_G$ is compact. All other correspondences seem to be new, except for the discrete spectrum in the case $\calG=\SO(p+1,q+1)$ which is a special case of \cite[Theorem C]{KO03}.

Similar results for the same algebraic groups, but over non-archimedean local fields have been obtained by Savin~\cite{Sav94}. Instead of decomposing the unitary representation $\Pi_\min$, he considers quotients of the smooth vectors $\Pi_\min^\infty$, just as in the classical theta correspondence. It would be interesting to investigate these questions also in the archimedean case.

Our proof builds on the \emph{stratified model} by the second author, which was applied in the context of holomorphic discrete series in our previous work~\cite{FL25}.

\section{Minimal representations of conformal groups}

We recall the construction of $L^2$-models for minimal representations of conformal groups of simple split Jordan algebras over $\RR$ and $\CC$. For this, we follow the exposition in \cite{HKM14}.

\subsection{The conformal group and its Lie algebra}\label{sec:ConfGrpAlg}

Let $V$ denote a simple unital Jordan algebra over $\FF\in\{\RR,\CC\}$. In the case $\FF=\CC$, we often view $V$ as a real Jordan algebra and note that as such it is also simple. If $\FF=\RR$ then its complexification $V_\CC$ is a simple complex Jordan algebra, and if $\FF=\CC$ then $V_\CC$ is isomorphic to $V\oplus V$. In some of the formulas below we use $\delta=\dim_\RR\FF\in\{1,2\}$ to make the formulation uniform.

We write $n=\dim_\FF V$ for the dimension of $V$ and $r=\mathrm{rk}_\FF V$ for its rank, and we assume $r\geq2$. For $x\in V$ let $L(x):V\to V$ denote multiplication by $x$. The bilinear form
$$ \tau_\FF:V\times V\to\FF, \quad (x,y)\mapsto\frac{r}{n}\tr(L(x)L(y)) $$
is non-degenerate and normalized such that $\tau_\FF(e,e)=r$, where $e\in V$ is the unit element. For an endomorphism $T\in\End(V)$ we write $T^\#$ for its adjoint with respect to $\tau_\FF$. The real part of $\tau_\FF$,
$$ \tau(x,y) = \Re\tau_\FF(x,y), $$
is a non-degenerate bilinear form on the real vector space $V$ which is positive definite if and only if $V$ is a Euclidean Jordan algebra.

Let
$$ P:V\to\End(V), \quad P(x)=2L(x)^2-L(x^2) $$
denote the quadratic representation of $V$ and $P(x,y)=L(x)L(y)+L(y)L(x)-L(xy)$ its bilinear version. The structure group $\Str(V)$ of $V$ is defined to be the group of all $g\in\GL(V)$ such that $P(gx)=gP(x)g^\#$ for all $x\in V$. Its Lie algebra $\str(V)$ is the direct sum of $L(V)=\{L(x):x\in V\}$ and $\aut(V)$, the Lie algebra of derivations of $V$, i.e. all $T\in\gl(V)$ such that $T(x\cdot y)=Tx\cdot y+x\cdot Ty$ for all $x,y\in V$. The latter one is spanned by elements of the form $[L(x),L(y)]$. Moreover, $\aut(V)$ is the Lie algebra of the group $\Aut(V)$ of automorphisms of $V$, i.e. those $g\in\GL(V)$ such that $g(x\cdot y)=gx\cdot gy$ for all $x,y\in V$. We note that by \cite[Proposition II.4.2]{FK94} and \cite[(1.8) and (1.9)]{HKM14} we have
\begin{equation}
    |\det(g)| = 1 \qquad \mbox{for all }g\in\Aut(V).\label{eq:DetTrivialOnAut}
\end{equation}

Note that $L(x)^\#=L(x)$ for $x\in V$ and that $D^\#=-D$ for all $D\in\aut(V)$. We have the following characterization of $\Aut(V)$ and $\aut(V)$:

\begin{lemma}[{see \cite[Proposition VIII.2.4 and VIII.2.6]{FK94}}]\label{lem:CharacterizationH}
    For $g\in\Str(V)$ the following equivalences hold:
    $$ g\in\Aut(V) \qquad \Leftrightarrow \qquad ge=e. $$
    For $X\in\str(V)$ the following equivalences hold:
    $$ X\in\aut(V) \qquad \Leftrightarrow \qquad Xe=0 \qquad \Leftrightarrow \qquad X^\#=-X. $$
\end{lemma}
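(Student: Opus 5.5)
The plan is to prove the group statement first and derive the Lie algebra statement from it, using only the structure-group identity $P(gx)=gP(x)g^\#$ and the decomposition $\str(V)=L(V)\oplus\aut(V)$.

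\emph{Group statement.} The implication ``$\Rightarrow$'' is immediate: an automorphism preserves the unit, since $g e\cdot gx=g(e\cdot x)=gx$ for all $x$, so $ge$ is a unit and hence equals $e$. For ``$\Leftarrow$'', take $g\in\Str(V)$ with $ge=e$. Applying the defining identity at $x=e$ and using $P(e)=\id$ gives $\id=gg^\#$, so $g^\#=g^{-1}$. Substituting back gives $P(gx)=gP(x)g^{-1}$ for every $x$. Polarizing yields $P(gx,gy)=gP(x,y)g^{-1}$, and specializing $y=e$ gives $P(x,e)=L(x)$. Hence $L(gx)=gL(x)g^{-1}$. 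Evaluating at $gy$ gives $gx\cdot gy=g(x\cdot y)$, so $g\in\Aut(V)$.

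\emph{Lie algebra statement.} Write $X=L(a)+D$ with $D\in\aut(V)$.
\begin{itemize}
\item Since $De=0$, because a derivation kills the unit via $D(e)=D(e\cdot e)=2\,e\cdot De$ together with the fact that $L(e)=\id$, we get $Xe=a$.
\item Since $L(a)^\#=L(a)$ and $D^\#=-D$, we get $X^\#=L(a)-D$.
\end{itemize}
Consequently each of the three conditions $X\in\aut(V)$, $Xe=0$ and $X^\#=-X$ is equivalent to $a=0$, which proves all the equivalences at once.

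Alternatively, one could differentiate the group statement, but the decomposition argument avoids any connectedness issues.

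\emph{Main obstacle.} The only subtle point is the polarization step in ``$\Leftarrow$''. It needs $P(x,e)=L(x)$, which follows directly from the definition $P(x,y)=L(x)L(y)+L(y)L(x)-L(xy)$ with $L(e)=\id$. The facts $D^\#=-D$ and $\str=L(V)\oplus\aut(V)$ are taken as stated earlier in the paper.
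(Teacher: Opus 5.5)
Your proof is correct. Since the paper gives no argument of its own (it only cites \cite[Propositions VIII.2.4 and VIII.2.6]{FK94}), yours is a genuinely different, self-contained route. For the group statement, you evaluate $P(gx)=gP(x)g^\#$ at $x=e$ to get $g^\#=g^{-1}$. You then polarize and use $P(x,e)=L(x)$ to obtain $L(gx)=gL(x)g^{-1}$, which is exactly multiplicativity. For the Lie algebra statement, writing $X=L(a)+D$ reduces all three conditions to $a=0$.

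What this buys over the citation is that it uses precisely the paper's definition of $\Str(V)$ via the $\tau_\FF$-adjoint. It also works uniformly for $\FF=\RR$ (Euclidean or not) and $\FF=\CC$, so one need not check that the setting of the reference matches.

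The caveat is that your Lie algebra half is only a reduction. All its content sits in the decomposition $\str(V)=L(V)\oplus\aut(V)$, which the paper asserts without proof. You can remove this dependence with the infinitesimal version of your own group argument. Differentiating $P(e^{tX}x)=e^{tX}P(x)e^{tX^\#}$ at $t=0$ gives $2P(x,Xx)=XP(x)+P(x)X^\#$. At $x=e$ this reads $2L(Xe)=X+X^\#$, hence $Xe=0\Leftrightarrow X^\#=-X$. Under these conditions, polarizing $2P(x,Xx)=[X,P(x)]$ and setting one variable equal to $e$ gives $L(Xx)=[X,L(x)]$, i.e.\ $X$ is a derivation. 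The converse, $De=0$ for a derivation, you already have.

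This also shows that your worry about connectedness when differentiating the group statement is unfounded, since only one-parameter subgroups are used.
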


The conformal group $\Co(V)$ is the group of rational transformations of $V$ generated by all translations $\overline{n}_a:V\to V,\,x\mapsto x+a$ ($x\in V$), the structure group $\Str(V)$ and the inversion $j:V\to V,\,x\mapsto-x^{-1}$. This is a simple real Lie group with trivial center. Its Lie algebra $\co(V)$ can be decomposed as
\begin{equation}
    \co(V)=V\oplus\str(V)\oplus V\label{eq:GelfandNaimark}
\end{equation}
with Lie bracket
\begin{equation}
    [(u_1,T_1,v_1),(u_2,T_2,v_2)] = (T_1u_2-T_2u_1,[T_1,T_2]+2(u_1\Box v_2)-2(u_2\Box v_1),-T_1^\# v_2+T_2^\# v_1),\label{eq:LieBracketInCO}
\end{equation}
where $u\Box v=L(uv)+[L(u),L(v)]\in\str(V)$.

In order to discuss some structure theory of the group $\Co(V)$ we choose a Cartan involution of a particular form. We first choose a Cartan involution $\vartheta$ of $V$, i.e. $\vartheta\in\Aut(V)$ is an automorphism of order two such that the $\RR$-bilinear form
$$ V\times V\to\RR, \quad (x,y)\mapsto(x|y):=\tau(x,\vartheta y) $$
is positive definite. The adjoint $g^*$ of $g\in\Str(V)$ with respect to this inner product is given by $g^*=\vartheta\circ g^\#\circ\vartheta$, so $g\mapsto\theta(g)=(g^*)^{-1}$ defines a Cartan involution on $\Str(V)$. It can be extended to a Cartan involution on $\Co(V)$ by $\theta(g)=\vartheta\circ j\circ g\circ j\circ\vartheta$. The corresponding involution on the Lie algebra $\co(V)$ is given by
$$ \theta(u,T,v) = (-\vartheta(v),-T^*,-\vartheta(u)) \qquad (u,v\in V,T\in\str(V)). $$
In particular the corresponding Cartan decomposition $\frakg=\frakk\oplus\frakp$ is given by
\begin{align}
    \frakk &= \{(u,T,-\vartheta(u)):u\in V,T+T^*=0\},\label{eq:LieAlgK}\\
    \frakp &= \{(u,T,\vartheta(u)):u\in V,T=T^*\}.\label{eq:LieAlgP}
\end{align} 

We write
$$ V=V^+\oplus V^- $$
for the decomposition of $V$ into $+1$ and $-1$ eigenspaces of $\vartheta$. Since $\vartheta$ is an automorphism, we have $V^\pm\cdot V^\pm\subseteq V^+$ and $V^\pm\cdot V^\mp\subseteq V^-$. In particular, $V^+$ is a Euclidean Jordan algebra. Let $c_1,\ldots,c_r\in V$ be a Jordan frame in $V^+$, i.e. a complete set of pairwise orthogonal primitive idempotents. If $V(c_i,\lambda)\subseteq V$ denotes the eigenspace of $L(c_i)$ to the eigenvalue $\lambda\in\{0,\frac{1}{2},1\}$, we have the following Peirce decomposition of $V$:
$$ V = \bigoplus_{1\leq i\leq j\leq r} V_{ij}, $$
where
$$ V_{ii} = V(c_i,1) \qquad \mbox{and} \qquad V_{ij} = V(c_i,\tfrac{1}{2})\cap V(c_j,\tfrac{1}{2}) \qquad \mbox{for }i\neq j. $$
We assume in what follows that the Jordan algebra $V$ is \emph{split} over $\FF$, i.e. $V_{ii}=\FF c_i$ for all $i=1,\ldots,r$. All other Peirce spaces $V_{ij}$ ($i\neq j$) have a common dimension $d=\dim_\FF V_{ij}$.

The Peirce decomposition has the following properties:
\begin{align}
    V_{ij}\cdot V_{jk} &\subseteq V_{ik} && \mbox{for $i,j,k$ distinct,}\label{eq:VijkMultiplication}\\
    V_{ij}\cdot V_{ij} &\subseteq V_{ii}\oplus V_{jj} && \mbox{for $i,j$ distinct.}\label{eq:VijMultiplication}
\end{align}
With the notation $V_{ij}^\pm=V_{ij}\cap V^\pm$ we even have the following result about multiplication operators:

\begin{lemma}\label{lem:VijkIso}
    Let $i,j,k$ be distinct and $x\in V_{jk}^\pm$ non-zero. Then $L(x):V_{ij}\to V_{ik}$ is an isomorphism.
\end{lemma}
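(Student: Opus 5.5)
Since $\dim V_{ij}=\dim V_{ik}=d$, it suffices to prove that $L(x)|_{V_{ij}}$ is injective. It already maps $V_{ij}$ into $V_{ik}$ by \eqref{eq:VijkMultiplication}.

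The plan is to compute $L(x)^2$ on $V_{ij}$ using the Jordan identity together with the Peirce rules, and to show that $L(x)^2|_{V_{ij}}$ is a nonzero scalar multiple of the identity. This is the classical Peirce identity: for $x\in V_{jk}$ and $y\in V_{ij}$ one has
$$x(xy)=\tfrac14\,\tau_{\FF}\text{-type}(x^2)\,y.$$
More precisely, \eqref{eq:VijMultiplication} gives $x^2=\alpha c_j+\beta c_k$ with $\alpha,\beta\in\FF$, and the claim is $x(xy)=\frac{\alpha}{4}\,y$, or the analogous statement with $\beta$, depending on the normalization.

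To derive it, polarize the Jordan identity $[L(a),L(a^2)]=0$ into the linearized form
$$L(a^2b)-L(a)L(ab)\ \text{-type identities},$$
or use the standard relation
$$2L(x)^2-L(x^2)=P(x).$$
The key observation is that $P(x)$ maps $V_{ij}$ into $V(c_k,0)\cap V(c_j,0)$-type spaces where it must vanish. Indeed, $P(x)$ maps $V_{ij}$ into $V_{ij}$, since it sends Peirce spaces of type $(j)\to(k)$ twice. Then the known rule that $P(V_{jk})$ kills components having only one index in $\{j,k\}$ gives
$$P(x)y=0 \qquad (y\in V_{ij}).$$
This reduces the claim to $2L(x)^2y=L(x^2)y=\frac12(\alpha+\beta)y$, because $c_j$ and $c_k$ each act by $\tfrac12$ or $0$ on $V_{ij}$. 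In fact $L(c_j)y=\tfrac12 y$ and $L(c_k)y=0$, so $L(x^2)y=\frac{\alpha}{2}y$ and hence $L(x)^2y=\frac{\alpha}{4}y$. Justifying $P(x)y=0$ from \cite[Ch.~IV]{FK94} (the Peirce relations for $P$) is the first technical step.

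The remaining, and main, step is to show $\alpha\neq0$; this is where $x\in V^\pm$ enters. If $x\in V^\pm$ then $x^2\in V^+$, and $\tau(x^2,c_j)=\tau(x,x c_j)=\frac12\tau(x,x)$. The inner product gives $(x|x)=\tau(x,\vartheta x)=\pm\tau(x,x)$, which is nonzero for $x\neq0$. Since $\tau(c_j,c_j)=1$ and $\tau(c_k,c_j)=0$, we get $\alpha=\tau_\FF(x^2,c_j)$, whose real part is $\pm\frac12(x|x)\neq0$. When $\FF=\CC$, $\tau$ is the real part of $\tau_\FF$, and a nonzero real part suffices. Hence $L(x)^2|_{V_{ij}}=\frac{\alpha}{4}\id$ is invertible, so $L(x):V_{ij}\to V_{ik}$ is injective, and therefore an isomorphism by the dimension count.

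The expected obstacle is the vanishing $P(x)V_{ij}=0$. If citing the Peirce rules for $P$ is awkward, I would instead linearize the Jordan identity directly with $a=x$, $b=c_j$ applied to $y$, which yields the same scalar relation.
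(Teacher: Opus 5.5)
Your argument is correct and is essentially the paper's. The paper cites \cite[Lemma IV.2.2]{FK94} for $L(x)^2y=\frac18\tau_\FF(x,x)y$ ($y\in V_{ij}$), uses $\vartheta x=\pm x$ to get $\tau_\FF(x,x)=\pm\|x\|^2\neq0$, and writes down the inverse $\pm8\|x\|^{-2}L(x)$ instead of counting dimensions. You instead rederive that identity from $P(x)y=0$ and $x^2=\alpha c_j+\beta c_k$ with $\alpha=\frac12\tau_\FF(x,x)$.

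The step you flag is justified most cleanly by the fundamental formula. With $c=c_j+c_k$ one has $P(c)x=x$ and $P(c)y=0$, so $P(x)y=P(c)P(x)P(c)y=0$. You should drop the muddled remark about $V(c_k,0)\cap V(c_j,0)$ and the intermediate expression $\frac12(\alpha+\beta)$.
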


\begin{proof}
    Let $x\in V_{jk}$ and $y\in V_{ij}$, then by \cite[Lemma IV.2.2]{FK94} (whose proof literally translates to split Jordan algebras over $\FF=\RR,\CC$ by replacing the norm $\|x\|^2$ by $\tau_\FF(x,x)$) we have
    \begin{equation}
        L(x)^2y = \frac{1}{8}\tau_\FF(x,x)y.\label{eq:Lxsquare}
    \end{equation}
    If additionally $x\in V_{jk}^\pm$, then $\vartheta(x)=\pm x$ and hence $\tau_\FF(x,x)=\tau(x,x)=\pm\tau(x,\vartheta(x))=\pm\|x\|^2$. It follows that $L(x):V_{ij}\to V_{ik}$ is an isomorphism with inverse $\pm8\|x\|^{-2}L(x):V_{ik}\to V_{ij}$.
%    Pairing both sides with $z\in V_{ij}$ using the trace form yields
%    \begin{equation}
%        \tau(xy,xz) = \tau(x(xy),z)=\frac{1}{8}\tau(x,x)\tau(y,z) \qquad (x\in V_{jk}^\pm,y,z\in V_{ij}).\label{eq:IsometryLxOnVij}
%    \end{equation}
%    Now, for $x\in V_{jk}^\pm$ we have $\vartheta(x)=\pm x$ and hence $\tau(x,x)=\pm\tau(x,\vartheta(x))=\pm\|x\|^2$. This shows
%    $$ \tau(xy,xz) = \pm\frac{1}{8}\|x\|^2\tau(y,z) \qquad (x\in V_{jk}^\pm,y,z\in V_{ij}). $$
%    Putting $z=\vartheta(y)$ and using once more that $x=\pm\vartheta(x)$ yields
%    $$ \|xy\|^2 = \frac{1}{8}\|x\|^2\|y\|^2 \qquad (x\in V_{jk}^\pm,y\in V_{ij}). $$
%    Hence, $L(x):V_{ij}\to V_{ik}$ is a scalar multiple of an isometry and in particular an isomorphism.
\end{proof}

By \cite[Proposition 1.4.6]{Moe10}, we have the following root space decomposition of $\frakl=\str(V)$ with respect to the abelian subalgebra $\bigoplus_{j=1}^r\RR L(c_j)$:
$$ \frakl = \frakl_0 \oplus \bigoplus_{i\neq j}\frakl_{ij}, $$
where
$$ \frakl_0 = \{L(x)+D:x\in\bigoplus_{i=1}^{r}V_{ii},D\in\aut(V),Dc_i=0\,\forall\,i\} $$
and
$$ \frakl_{ij} = \{\tfrac{1}{2}L(x)+[L(c_i),L(x)]:x\in V_{ij}\}. $$
For $x\in V_{ij}$ we have $[L(c_i),L(x)]=-[L(c_j),L(x)]$ by \cite[Lemma 1.4.4~(2)]{Moe10}, so the Lie algebra $\frakg=\aut(V)$ decomposes as
\begin{equation}
    \frakg = \frakg_0 \oplus \bigoplus_{i<j}\frakg_{ij}\label{eq:DecompositionLieAlgH}
\end{equation}
with
$$ \frakg_0 = \frakl_0\cap\frakg = \{D\in\aut(V):Dc_i=0\,\forall\,i\} \qquad \mbox{and} \qquad \frakg_{ij} = (\frakl_{ij}\oplus\frakl_{ji})\cap\frakg = [L(c_i),L(V_{ij})]. $$
Note the following inclusions for $i,j,k,\ell$ distinct:
\begin{gather}\label{eq:CommutationInclusions}
    [\frakg_{ij},\frakg_{ij}]\subseteq\frakg_0, \qquad [\frakg_{ij},\frakg_{jk}]\subseteq\frakg_{ik}, \qquad [\frakg_{ij},\frakg_{k\ell}]=\{0\},\\
    [\frakg_0,\frakg_{ij}]\subseteq\frakg_{ij}, \qquad [\frakg_0,\frakg_0]\subseteq\frakg_0.\nonumber
\end{gather}

\subsection{The minimal representation}\label{sec:MinRep}

We recall the $L^2$-model for minimal representations constructed in \cite{HKM14,Moe10}. Note that we slightly modify the formulas by twisting the representation with the Cartan involution in \cite[equation (2.4)]{HKM14}. The construction can be summarized as follows.

Let $c$ be a primitive idempotent and $\calO=\Str(V)_0\cdot c$ its orbit of the identity component $\Str(V)_0$ of $\Str(V)$. Then $\calO=-\calO$ if and only if $V$ is not Euclidean. $\calO$ carries a unique (up to scalar multiples) equivariant measure $\mu$ satisfying $d\mu(gx)=|\det_\RR(g)|^{\frac{rd}{2n}}\,d\mu(x)$. In particular it behaves under dilation by $z\in\FF^\times$ as $d\mu(zx)=|z|^{\delta\cdot\frac{rd}{2}}\,d\mu(x)$.

Unless $\co(V)\simeq\so(p+1,q+1)$ with $p,q\geq2$ and $p+q$ odd, there is a unique irreducible unitary representation $\Pi_\min$ of a finite covering of $\Co(V)_0$ on $L^2(\calO):=L^2(\calO,d\mu)$ whose restriction to $\Str(V)_0\overline{N}$ splits and is given by
\begin{align}
    \Pi_\min(\overline{n}_a)f(x) &= e^{-i\tau(x,b)}f(x) && (\overline{n}_b=\exp(0,0,b)\in \overline{N}),\label{eq:ActionN}\\
    \Pi_\min(l)f(x) &=|\mathrm{det}_\RR(l)|^{-\frac{rd}{4n}}f(l^{-1}x) && (l\in\Str(V)_0).\label{eq:ActionL}
\end{align}
The corresponding Lie algebra representation of $\mathfrak{g}$ is given by regular differential operators and hence extends to $C^\infty(\calO)$. It is given by
\begin{align*}
    d\Pi_\min(0,0,u) &= -i\tau(x,u) && (u\in V),\\
    d\Pi_\min(0,T,0) &= -\partial_{Tx}-\frac{rd}{4n}\tr_\RR(T) && (T\in\mathfrak{l}),\\
    d\Pi_\min(v,0,0) &= -i\tau(\calB,v) && (v\in V),
\end{align*}
where $\calB:C^\infty(\calO)\to C^\infty(\calO)\otimes V_\CC$ is the Bessel operator given by
$$ \calB = \sum_{\alpha,\beta}P(\widehat{e}_\alpha,\widehat{e}_\beta)x\frac{\partial^2}{\partial x_\alpha\partial x_\beta}+\frac{\delta d}{2}\sum_\alpha\widehat{e}_\alpha\frac{\partial}{\partial x_\alpha} $$
with $(e_\alpha)_\alpha\subseteq V$ a basis of $V$ (as real vector space), $(\widehat{e}_\alpha)_\alpha\subseteq V$ its dual basis with respect to $\tau$ and $x=\sum_\alpha x_\alpha e_\alpha$.

\begin{remark}
    Note that for $\FF=\CC$ this is not exactly the version of $\Pi_\min$ as constructed in \cite{HKM14} since the bilinear form $\tau$ is normalized differently. The relation between our formulas and the ones in \cite{HKM14} is the automorphism $(u,T,v)\mapsto(2u,T,\frac{1}{2}v)$.
\end{remark}

If $V$ is real and Euclidean or complex, then we consider $\Pi_\min$ as an irreducible unitary representation of a finite cover $\widetilde{\calG}$ of the connected group $\calG=\Co(V)_0$. If $V$ is real and non-Euclidean, we extend $\Pi_\min$ to an irreducible unitary representation of a finite cover $\widetilde{\calG}$ of the (possibly disconnected) group $\calG=\Co(V)_0\cup(-\id_V)\Co(V)_0$ in order to be sure that $\calG$ has the disconnected group $\PGL(2,\RR)$ as subgroup, instead of only $\PSL(2,\RR)$ (see Section~\ref{sec:DualPair}).

\begin{lemma}
    If $V$ is real and non-Euclidean, the representation $\Pi_\min$ of a finite cover of $\Co(V)_0$ can be extended to a finite cover of $\calG=\Co(V)_0\cup(-\id_V)\Co(V)_0$ by
    $$ \Pi_\min(-\id_V)f(x) = f(-x) \qquad (f\in L^2(\calO),x\in\calO). $$
\end{lemma}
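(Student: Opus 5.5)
The plan is to define the operator $\sigma f(x)=f(-x)$ on $L^2(\calO)$ and check three things. First, $\sigma$ is well-defined and unitary. Second, conjugation by $\sigma$ on $\Pi_\min$ agrees with conjugation by $-\id_V$ inside $\Co(V)$. Third, $\sigma^2=\id$. These together give a representation of the group generated by $\widetilde{\Co(V)_0}$ and an element lifting $-\id_V$, i.e.\ of a finite cover of $\calG$.

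For the first point: since $V$ is non-Euclidean we have $\calO=-\calO$, so $x\mapsto -x$ maps $\calO$ to itself. By the dilation property $d\mu(zx)=|z|^{\delta rd/2}d\mu(x)$ with $z=-1$, the measure $\mu$ is invariant. Hence $\sigma$ is unitary and $\sigma^2=\id$.

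Next I would verify the intertwining relation on the Lie algebra level:
$$\sigma\, d\Pi_\min(X)\,\sigma^{-1}=d\Pi_\min(\Ad(-\id_V)X) \qquad (X\in\co(V)).$$
Here $-\id_V\in\Str(V)$ commutes with $\str(V)$ and acts on $\co(V)=V\oplus\str(V)\oplus V$ by $(u,T,v)\mapsto(-u,T,-v)$: translations by $a$ are conjugated to translations by $-a$, and $-\id_V$ commutes with the inversion $j$. The check then goes term by term.
\begin{itemize}
\item For $(0,0,u)$, the operator is multiplication by $-i\tau(x,u)$, and conjugating by $\sigma$ gives $-i\tau(-x,u)=d\Pi_\min(0,0,-u)$.
\item For $(0,T,0)$, the operator $-\partial_{Tx}-\frac{rd}{4n}\tr_\RR(T)$ is invariant under $x\mapsto-x$, because $\partial_{Tx}$ is linear in $x$ and first order, so both signs cancel.
\item For $(v,0,0)$, the Bessel operator $\calB$ consists of terms that are either linear in $x$ with two derivatives or constant with one derivative. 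Both kinds change sign under $x\mapsto -x$, so $\sigma\calB\sigma^{-1}=-\calB$, which is what is required.
\end{itemize}

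Finally I would pass to the group. Since $\Co(V)_0$ is normal in $\calG$, conjugation by $-\id_V$ lifts to an automorphism $\alpha$ of the cover $\widetilde{\Co(V)_0}$; choose the cover to be, say, the universal cover modulo the kernel of $\Pi_\min$. The Lie algebra identity integrates to $\sigma\Pi_\min(g)\sigma^{-1}=\Pi_\min(\alpha(g))$. The semidirect product $\widetilde{\Co(V)_0}\rtimes\ZZ/2$, with the generator acting by $\alpha$ and sent to $\sigma$, is then represented. Up to the finite kernel this is a finite cover of $\calG$, because $(-\id_V)^2=\id$. Irreducibility is automatic since the restriction to $\Co(V)_0$ is already irreducible. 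Consistency with \eqref{eq:ActionL} holds as well: $-\id_V\in\Str(V)$ and $|\det(-\id_V)|=1$, so the formula agrees with the Levi action.

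The main obstacle is making the covering-group bookkeeping precise, namely that $\alpha$ preserves the kernel of $\Pi_\min$ on the universal cover. This follows from the intertwining relation itself: $\Pi_\min\circ\alpha$ is unitarily equivalent to $\Pi_\min$, so the two have the same kernel.
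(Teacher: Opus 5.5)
Your proposal is correct and follows the paper's proof. The core is the same check, $\sigma\circ d\Pi_\min(u,T,v)=d\Pi_\min(-u,T,-v)\circ\sigma$, combined with $\Ad(-\id_V)(u,T,v)=(-u,T,-v)$. The other points you supply are left implicit in the paper, and you handle them correctly: invariance of $\mu$ under $x\mapsto -x$, $\sigma^2=\id$, lifting conjugation by $-\id_V$ to the cover and checking that it preserves $\ker\Pi_\min$, and consistency with the Levi action.
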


\begin{proof}
    It is easy to check that if we define $\Pi_\min(-\id_V)$ in the above way, the following identity holds for all $u,v\in V$ and $T\in\frakl$:
    $$ \Pi_\min(-\id_V)\circ d\Pi_\min(u,T,v) = d\Pi_\min(-u,T,-v)\circ\Pi_\min(-\id_V). $$
    Since $\Ad(-\id_V)(u,T,v)=(-u,T,-v)$, this shows the claim.
\end{proof}

\section{Dual pairs and orbits}

We identify a dual pair of the form $G\times G'$ in $\calG$, where $G=\calG\cap\Aut(V)$ and $G'$ has Lie algebra $\sl(2,\FF)$, and study orbits of $G$ in $\calO$.

\subsection{The dual pair}\label{sec:DualPair}

We first identify the dual pair on the Lie algebra level. The Lie algebra of $G=\calG\cap\Aut(V)$ is $\frakg=\aut(V)$. We write $\frakg'$ for the Lie algebra spanned over $\FF$ by
$$ E=(e,0,0), \qquad F=(0,0,e), \qquad \mbox{and} \qquad H=(0,2L(e),0). $$
By \eqref{eq:LieBracketInCO} these elements satisfy the standard commutator relations
$$ [H,E] = 2E, \qquad [H,F] = -2F, \qquad [E,F] = H, $$
so $\frakg'\simeq\sl(2,\FF)$.

\begin{lemma}\label{lem:DualPairLieAlg}
    $(\frakg,\frakg')$ is a dual pair in $\co(V)$.
\end{lemma}

\begin{proof}
    Let first $X=(u,T,v)$ be in the centralizer of $\frakg'\simeq\sl(2,\FF)$. By \eqref{eq:LieBracketInCO} we find $(u,0,-v)=[H,X]=0$, so $u=v=0$, and further $(Te,0,0)=[X,E]=0$, so $Te=0$ and hence $T\in\aut(V)=\frakg$ by Lemma~\ref{lem:CharacterizationH}.\\
    Now let $X=(u,T,v)$ be in the centralizer of $\aut(V)$, then by \eqref{eq:LieBracketInCO}
    $$ (Su,[S,T],-S^\#v) = [(0,S,0),X] = 0 \qquad \mbox{for all }S\in\aut(V). $$
    In particular, $Su=Sv=0$ for all $S\in\aut(V)=\aut(V)^\#$. In the case where $V$ is complex, \cite[Lemma VIII.5.1]{FK94} implies that $u,v\in\CC e$. If $V$ is real, then $V_\CC$ is simple and $\aut(V_\CC)=\aut(V)_\CC$ by \cite[Proposition VIII.1.1]{FK94}, so $u,v\in\CC e\subseteq V_\CC$ by the same argument. But $V\cap\CC e=\RR e$, so $u,v\in\RR e$ in this case. It remains to show that $T\in\FF L(e)$. Write $T=L(x)+D$ with $x\in V$ and $D\in\aut(V)$. Then
    $[S,T]=L(Sx)+[S,D]=0$ for all $S\in\aut(V)$, which implies that $Sx=0$ and $[S,D]=0$ for all $S$. Since $\aut(V)$ is semisimple, this implies $D=0$, and by using once more \cite[Lemma VIII.5.1]{FK94} we find $x\in\FF e$.
\end{proof}

Now let $G'$ denote the subgroup of $\calG$ given by the fractional linear transformations of the form
$$ z\mapsto(az+b)(cz+d)^{-1} \qquad \begin{pmatrix}a&b\\c&d\end{pmatrix}\in\begin{cases}\PSL(2,\RR)&\mbox{if $V$ is Euclidean,}\\ 
\PGL(2,\FF)&\mbox{if $V$ is non-Euclidean or complex.}\end{cases} $$

Since $-\id_V\in \calG$ for $V$ real and non-Euclidean, this disconnected group is indeed a subgroup of $\calG$. Clearly $G'\simeq\PSL(2,\RR)$ resp. $\PGL(2,\RR)$ resp. $\PGL(2,\CC)$ for $V$ Euclidean resp. non-Euclidean resp. complex. The Lie algebra of $G'$ is $\frakg'$.

To prove that $(G,G')$ is a dual pair in $\calG$, we first consider the pair $(\Aut(V),\PGL(2,\FF))$ inside $\Co(V)$, where $\PGL(2,\FF)$ is defined as above, also for $V$ Euclidean. For $r>2$, the automorphism group $\Aut(V)$ has trivial center and $(\Aut(V),\PGL(2,\FF))$ turns out to be a dual pair inside $\Co(V)$. For $r=2$, the center $Z(\Aut(V))$ of $\Aut(V)$ has two elements, the non-trivial one being the linear map that maps the identity element $e$ of $V$ to itself and acts by $-1$ on the orthogonal complement $e^\perp\subseteq V$. In this case, the centralizer of $\Aut(V)$ is the product of $\PGL(2,\FF)$ and $Z(\Aut(V))$.

\begin{lemma}
    Let $V$ be a simple split Jordan algebra over $\FF\in\{\RR,\CC\}$ of rank $r$.
    \begin{itemize}
        \item If $r>2$, then $(\Aut(V),\PGL(2,\FF))$ is a dual pair in $\Co(V)$ and $(G,G')$ is a dual pair in $\calG$.
        \item If $r=2$, then $(\Aut(V),\PGL(2,\FF) Z(\Aut(V)))$ is a dual pair in $\Co(V)$ and $(G,G'Z(G))$ is a dual pair in $\calG$.
    \end{itemize}
\end{lemma}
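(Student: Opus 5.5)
The plan is to compute the two centralizers in $\Co(V)$ directly. For each, the Lie algebra statement of Lemma~\ref{lem:DualPairLieAlg} controls the identity components, and the Jordan-algebraic characterization in Lemma~\ref{lem:CharacterizationH} handles the component groups.

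\emph{Step 1: the centralizer of $\PGL(2,\FF)$ is $\Aut(V)$.} Let $g\in\Co(V)$ commute with $\PGL(2,\FF)$ (it suffices to use the subgroup generated by $\exp(tH)$, the translations $\overline{n}_{se}$ and $j$).
\begin{enumerate}[(a)]
\item Since $g$ commutes with $\exp(tH)$, the map $\Ad(g)$ preserves every eigenspace of $\ad(H)$ in the grading \eqref{eq:GelfandNaimark}.
\item Hence $g$ normalizes both the parabolic $\Str(V)\ltimes\overline{N}$ and its opposite. The intersection of these two parabolics is the Levi factor $\Str(V)$, so $g\in\Str(V)$. Alternatively, one can argue that $g$ fixes the point $0$ and the point at infinity, which are the fixed points of the dilations.
\item Commuting with $\overline{n}_{e}$ gives $g(x+e)=gx+e$, hence $ge=e$. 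Lemma~\ref{lem:CharacterizationH} then gives $g\in\Aut(V)$.
\end{enumerate}
Conversely, automorphisms commute with translations by multiples of $e$, with dilations, and with $x\mapsto -x^{-1}$. So the centralizer of $\PGL(2,\FF)$ is exactly $\Aut(V)$. This holds equally with $\PSL(2,\RR)$ in place of $\PGL(2,\RR)$, since only $\exp(tH)$, $\overline{n}_{se}$ and $j$ were used.

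\emph{Step 2: the centralizer of $\Aut(V)$.} Let $g$ commute with $\Aut(V)$.
\begin{enumerate}[(a)]
\item $\Ad(g)$ commutes with $\Ad(\Aut(V))$, so it preserves the centralizer of $\aut(V)$. By Lemma~\ref{lem:DualPairLieAlg} this centralizer is $\frakg'\simeq\sl(2,\FF)$.
\item The restriction $\Ad(g)|_{\frakg'}$ is therefore an automorphism of $\sl(2,\FF)$. In the complex case it is complex linear, because $\Co(V)$ consists of holomorphic maps and so $\co(V)$ is a complex Lie algebra on which $\Ad(g)$ acts $\CC$-linearly.
\item Every such automorphism is $\Ad(h)$ for some $h\in\PGL(2,\FF)$. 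Then $h^{-1}g$ centralizes both $\Aut(V)$ (because $h$ does) and $\frakg'$.
\item To apply Step 1, we need $h^{-1}g$ to centralize the group $\PGL(2,\FF)$, not just its Lie algebra. Centralizing $\frakg'$ gives commutation with the identity component, i.e.\ with $\exp(tH)$ and the translations, which are exactly what Step 1 used. For $j$, we check that $h^{-1}g$ commutes with it via the description $j=\exp(\tfrac{\pi}{2}(E-F))$ up to sign. If this fails, we instead rerun Step 1(a)--(c) using only $\exp(tH)$ and $\overline{n}_{se}$.
\item Step 1 then puts $h^{-1}g$ in $\Aut(V)$ and in the centralizer of $\Aut(V)$, so $h^{-1}g\in Z(\Aut(V))$.
\end{enumerate}
Thus the centralizer of $\Aut(V)$ is $\PGL(2,\FF)\,Z(\Aut(V))$.

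\emph{Step 3: the center of $\Aut(V)$.} Using the classification, or the root decomposition \eqref{eq:DecompositionLieAlgH} together with \eqref{eq:CommutationInclusions}, we verify two facts.
\begin{itemize}
\item For $r>2$ the center $Z(\Aut(V))$ is trivial. The idea is that a central element preserves the Jordan frame up to the action of the root subgroups and must act trivially on each $\frakg_{ij}$.
\item For $r=2$ the center is $\{\id,z\}$ with $z=\id$ on $\FF e$ and $z=-\id$ on $e^\perp$. Moreover $z\notin\PGL(2,\FF)$, since the elements of $\PGL(2,\FF)\cap\Str(V)$ act on $V$ by scalars.
\end{itemize}
For $r=2$ this also confirms that $\PGL(2,\FF)Z(\Aut(V))$ is its own double centralizer. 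Indeed, by Step 1 the centralizer of $\PGL(2,\FF)$ is $\Aut(V)$, and $z$ commutes with $\Aut(V)$ by centrality.

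\emph{Step 4: passing to $\calG$.} Intersect Steps 1--2 with $\calG$.
\begin{itemize}
\item By Step 1, the centralizer of $G'$ in $\calG$ is $\Aut(V)\cap\calG=G$; the same argument works for $G'\simeq\PSL(2,\RR)$.
\item For the centralizer of $G$, repeat Step 2 with $G$ in place of $\Aut(V)$. The Lie algebra of $G$ is still $\aut(V)$, and $G'\subseteq\calG$, so we reduce to $Z_{\calG}(G)\cap G=Z(G)$.
\end{itemize}
The main obstacle is this last reduction: we must show that $Z(G)$ is trivial for $r>2$ and equals $Z(\Aut(V))$ for $r=2$, even though $G$ may be a proper (e.g.\ identity-component-type) subgroup of $\Aut(V)$. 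I would first try the weight argument: a central element of $G$ commutes with the maximal split torus and the root subgroups of $G$, so it acts by a scalar on each irreducible $G$-constituent of $V$, namely $\FF e$ and $e^\perp$. Preservation of the multiplication $e^\perp\cdot e^\perp\to V$ then forces that scalar to be $\pm1$, with $-1$ possible only when $e^\perp\cdot e^\perp\subseteq\FF e$, which happens exactly when $r=2$.
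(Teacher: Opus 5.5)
Your proposal is correct and is essentially the paper's proof. On one side, $\Ad(g)H=H$ forces $g\in\Str(V)$, and commuting with the translation by $e$ gives $ge=e$ (the paper uses $\Ad(g)E=E$ instead). On the other side, $\Ad(g)$ preserves $\frakg'=Z_{\co(V)}(\aut(V))$, agrees there with $\Ad(h)$ for some $h\in\PGL(2,\FF)$, and $h^{-1}g$ lands in the center; the paper only asserts that this center is trivial for $r>2$, and your Schur-plus-multiplicativity argument on $e^\perp$ actually supplies it.

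Two corrections to your Step~4:
\begin{enumerate}[(1)]
\item Nothing about $Z(G)$ needs computing for $r=2$, since the statement is phrased with $Z(G)$. Moreover, $Z(G)=Z(\Aut(V))$ is false in general. For $V=\Herm(2,\CC)$ one gets $G\simeq\SO(3)$ with trivial center, because the element acting by $-1$ on $e^\perp$ is not in $\Co(V)_0$.
\item The point that genuinely needs an argument concerns Euclidean $V$: there the element $h$ could a priori lie in $\PGL(2,\RR)\setminus G'$, so $h^{-1}g$ need not lie in $\calG$. This is ruled out because it would give $-a\in\Co(V)_0$ for some $a\in\Aut(V)$, whereas linear elements of $\Co(V)_0$ preserve the symmetric cone of $V$. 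The paper's ``the same arguments apply'' also skips this point.
\end{enumerate}
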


\begin{proof}
    Since elements of $\Aut(V)$ are multiplicative and fix the identity, it follows immediately that $\Aut(V)$ commutes with all fractional linear transformations in $\PGL(2,\FF)$. It remains to show that $\Aut(V)$ and $\PGL(2,\FF)\times Z(\Aut(V))$ are mutual centralizers.
    
    Let $g\in Z_{\Co(V)}(\PGL(2,\FF))\subseteq Z_{\Co(V)}(\frakg')$, then $\Ad(g)H=H$. Since $H$ defines the three-grading \eqref{eq:GelfandNaimark}, it follows that $g$ is contained in the Levi subgroup of the corresponding maximal parabolic subgroup, i.e. $g\in\Str(V)$. Moreover, $\Ad(g)E=E$ implies that $ge=e$ and hence $g\in\Aut(V)$ by Lemma~\ref{lem:CharacterizationH}.

    Now assume that $g\in Z_{\Co(V)}(\Aut(V))$. Since $\Aut(V)$ commutes with $\PGL(2,\FF)$, we must have $\Ad(g)\frakg'\subseteq Z_\frakg(\aut(V))=\frakg'$ by Lemma~\ref{lem:DualPairLieAlg}. So $\Ad(g)\frakg'=\frakg'$ and hence $\Ad(g)|_{\frakg'}\in \Aut(\frakg') \simeq \PGL_2 (\FF)$. We can therefore choose $g'\in\PGL(2,\FF)\subseteq\Co(V)$ such that $\Ad(g)=\Ad(g')$ on $\frakg'$. It follows that $g^{-1}g'\in Z_{\Co(V)}(\frakg')=\Aut(V)$. But since $g^{-1}g'$ also commutes with every element in $\Aut(V)$, it has to be contained in the center of $\Aut(V)$. If $r>2$, the center of $\Aut(V)$ is trivial, so $g=g'\in\PGL(2,\FF)$. For $r=2$, the center of $\Aut(V)$ also contains the non-trivial element which acts by $-1$ on the orthogonal complement $e^\perp\subseteq V$ of the identity $e$.

    The same arguments apply to $(G,G')$ in $\calG$, because they have the same Lie algebras as $(\Aut(V),\PGL(2,\FF))$.
\end{proof}

\subsection{Orbits of $G$ in $\calO$}

We now study orbits of $G$ in $\calO$. Note that the action of $G$ commutes with dilations by $\FF^\times$ and $\calO$ is $\FF^\times$-stable (resp. $\RR_+$ if $V$ is Euclidean). Moreover, the map $\calO \to \FF,\ z\mapsto\tau_\FF(z,e)$ is constant on each $G$-orbit and has value $1$ at $c$. This implies that $\calO_G=G\cdot c$ and its dilations by $\FF^\times$ (resp. $\RR_+$) are distinct $G$-orbits in $\calO$. Their union turns out to be open and dense.

\begin{proposition}\label{prop:FOHopendense}
\begin{enumerate}[(1)]
    \item If $\FF=\RR$ and $V$ is Euclidean, then $\RR_+\calO_G$ is open dense in $\calO$.
    \item If $\FF=\RR$ and $V$ is non-Euclidean, then $\RR^\times\calO_G$ is open dense in $\calO$.
    \item If $\FF=\CC$, then $\CC^\times\calO_G$ is open dense in $\calO$.
\end{enumerate}
\end{proposition}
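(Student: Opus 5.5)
The plan is to compare dimensions and then use the function $\phi:\calO\to\FF$, $\phi(x)=\tau_\FF(x,e)$, which is the Jordan trace restricted to $\calO$. Since $\phi$ is constant on $G$-orbits and homogeneous under dilations, the set $\FF^\times\calO_G$ (resp. $\RR_+\calO_G$) is contained in $\{x\in\calO:\phi(x)\neq0\}$ (resp. $\{\phi>0\}$ in the Euclidean case). I will prove equality, i.e. that $\{x\in\calO:\phi(x)=1\}$ is exactly $\calO_G$, or at least that $\calO_G$ is open in this level set and the level set is connected. Density then follows because $\{\phi=0\}$ is the zero set of a non-zero real-analytic (indeed polynomial) function on the connected analytic manifold $\calO$, hence nowhere dense. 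In the Euclidean case $\phi>0$ on all of $\calO$, since rank one elements in the closure of the symmetric cone have positive trace.

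The first step is an infinitesimal computation. I will show that the orbit map $G\to\{\phi=1\}$, $g\mapsto gc$, is a submersion at $c$, i.e. $\frakg c=T_c\calO\cap\ker d\phi$. The tangent space is $T_c\calO=\frakl c=\{L(x)c+Dc\}$. By the Peirce decomposition with respect to $c=c_1$, we have $L(x)c\in\FF c\oplus\bigoplus_{j>1}V_{1j}$, so $T_c\calO=\FF c\oplus\bigoplus_{j\ge2}V_{1j}$, with $\FF$-dimension $1+(r-1)d$. On the other side, $\frakg_{1j}c=[L(c_1),L(V_{1j})]c_1$. A short computation with $L(c_1)$ acting by $\tfrac12$ on $V_{1j}$ gives $[L(c_1),L(x)]c_1=c_1(xc_1)-x c_1^2$ up to sign, which equals $\tfrac14x-\tfrac12x=-\tfrac14x$. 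Hence $\frakg c\supseteq\bigoplus_{j\ge2}V_{1j}$, and this is exactly the kernel of $d\phi$ on $T_c\calO$, since $\phi(c)\ne0$ and $\tau_\FF(V_{1j},e)=0$. Thus $\FF^\times\calO_G$ has full dimension near $c$, and by $G$- and $\FF^\times$-equivariance the same holds at every point of $\FF^\times\calO_G$. So $\FF^\times\calO_G$ is open in $\calO$.

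For density, the same computation is needed at an arbitrary $x\in\calO$ with $\phi(x)\ne0$. This makes each $G$-orbit in the level set $\{\phi=1\}$ open, so the orbits are the connected components. I expect the main obstacle to be showing that this level set meets only one $G$-orbit, i.e. connectedness or an explicit normal form. I would try to reduce to $c$ directly: $x\in\calO$ has the form $x=lc$ with $l\in\Str(V)_0$, and I would use a decomposition $\Str(V)_0=G_0\cdot(\text{Borel-type part})$ coming from the root decomposition of $\frakl$ and the grading \eqref{eq:DecompositionLieAlgH} to write a rank one element with nonzero trace as $g(\lambda c+y)$ with $y\in\bigoplus_jV_{1j}$. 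Rank one elements of this shape are then conjugated to $\lambda c$ by $\exp$ of elements of $\frakg_{1j}$ together with Lemma~\ref{lem:VijkIso}.

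If this normal form is awkward, the fallback is to avoid connectedness altogether. Since $\FF^\times\calO_G$ is open, it suffices to show that its complement has empty interior. For that I would argue that $\calO\setminus\FF^\times\calO_G$ is contained in $\{\phi=0\}$ together with a finite union of lower-dimensional $G\times\FF^\times$-orbits. The finiteness would come from the finitely many $G$-orbits of rank one elements visible through the Peirce frame, namely the signs of $\tau_\FF$ on the components of $V_{1j}^\pm$.
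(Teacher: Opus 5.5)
Your openness argument is sound. The computations $T_c\calO=\FF c\oplus V(c,\tfrac12)$ and $[L(c),L(y)]c=-\tfrac14 y$, together with $G\times\FF^\times$-equivariance, show that $\FF^\times\calO_G$ (resp.\ $\RR_+\calO_G$) is open. Since $x^2=\phi(x)x$ on $\calO$, the same computation at any $x$ with $\phi(x)\neq0$ shows that every $G$-orbit in $\{\phi=1\}\cap\calO$ is open in that level set. This is exactly why density is the real difficulty, and there is a genuine gap here. Density is equivalent to $G$ acting transitively on $\{\phi=1\}\cap\calO$, because any second orbit would be a second \emph{open} orbit, and you do not prove this.

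The tool you propose is a global decomposition $\Str(V)_0=G_0\cdot B$ with $B$ built from $\frakl_0\oplus\bigoplus_{i<j}\frakl_{ij}$. It holds in the Euclidean case (Iwasawa, $G$ compact) but fails as soon as $G$ is non-compact. Take $V=M(r,\RR)$: $\Str(V)_0$ acts by $x\mapsto axb$, $G_0$ by conjugation, and $B$ by pairs of triangular matrices. Then $\Str(V)_0=G_0B$ would force every $ba\in\GL(r,\RR)_0$ to be a product of two triangular matrices, which fails for $\left(\begin{smallmatrix}0&1\\-1&0\end{smallmatrix}\right)$ when $r=2$. In general $G_0B$ is only open in $\Str(V)_0$. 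Showing that $G_0Q_1$ contains every $l$ with $\phi(lc)\neq0$ merely restates what you want to prove. The second half of your normal-form step is correct: if $c+y\in\calO$ with $y\in V(c,\tfrac12)$, then $y^2=0$ and $c+y=\exp(-4[L(c),L(y)])c$.

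Your fallback does not get around this. By your own computation there are no lower-dimensional $G\times\FF^\times$-orbits inside $\{\phi\neq0\}$. What has to be ruled out is a second open orbit, and counting signs of $\tau_\FF$ on the $V_{1j}^\pm$ gives no argument for that.

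The missing ingredient is a uniqueness statement for open orbits. The paper obtains it as follows. It identifies $\calO/\FF^\times\simeq L/Q_1$, where $Q_1$ is the parabolic subgroup stabilizing $V(c,1)$, using $Q_1\cdot c=\FF^\times c$. The claim then becomes: $Q_1$ has a unique open orbit on the semisimple symmetric space $L/G$. This follows from \cite[Proposition 8.1]{vdB05}, because the root systems of $\str(V)$ and of $\str(V^+)$ with respect to $\sum_i\RR L(c_i)$ are both of type $A_{r-1}$ and so have the same Weyl group.
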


\begin{proof}
    Write $L=\Str(V)\cap\calG$ and $K_L=L^\theta$ for its maximal compact subgroup. By \cite[Proposition 1.5.5]{Moe10} the subgroup $Q_1=\{g\in L:gV(c,1)\subseteq V(c,1)\}$ is parabolic in $L$ with Langlands decomposition $Q_1=M_1N_1$, where $M_1=\{g\in L:gL(c)=L(c)g\}$ and $N_1$ fixes $c$. Moreover, the stabilizer $L_c$ of $c$ in $L$ is given by $L_c=(M_1\cap L_c)N_1\subseteq Q_1$ and $M_1=(M_1\cap K_L)A_1(M_1\cap L_c)$ with $A_1=\exp(\RR L(c)+\RR L(e-c))$. It follows that
    $$ Q_1\cdot c = (M_1\cap K_L)A_1(M_1\cap L_c)N_1\cdot c = (M_1\cap K_L)A_1\cdot c = (M_1\cap K_L)\cdot\RR_+ c. $$
    Since $M_1\cap K_L$ is compact and leaves $V(c,1)=\FF c$ invariant, its orbit through $c$ is $\{c\}$ for $V$ real and Euclidean, $\{\pm c\}$ for $V$ real and non-Euclidean, and $\{e^{i\theta}c:\theta\in\RR\}$ for $V$ complex. This implies $Q_1\cdot c=\FF^\times c$, so $L/Q_1=\calO/\FF^\times$. To show the claim of the proposition it therefore suffices to show that the orbit of $G$ through the base point of $L/Q_1$ is open and dense.\\
    This is equivalent to showing that the $Q_1$-orbit through the base point of the semisimple symmetric space $L/G$ is open and dense. The subgroup $G\subseteq L$ is open in the fixed point set of the involution $g\mapsto g^{-\#}$ which commutes with the Cartan involution $g\mapsto\theta(g)=g^{-*}=\vartheta\circ g^{-\#}\circ\vartheta$. The intersection of both $(-1)$-eigenspaces in $\frakl$ is $L(V^+)$ which contains $\sum_{i=1}^r\RR L(c_i)$ as a maximal abelian subspace, where $c_1,\ldots,c_r$ is a Jordan frame in $V^+$ (see \cite[Lemma 1.4.5]{Moe10}). Note that this subspace is also maximal abelian in the structure algebra $\str(V^+)$ of the Euclidean Jordan subalgebra $V^+$. The corresponding root systems are both of type $A_{r-1}$ (see \cite[Proposition 1.4.6]{Moe10}), so they have the same Weyl group. Together with \cite[Proposition 8.1]{vdB05} this shows that $Q_1$ has a unique open (hence dense) orbit in $L/G$.
\end{proof}

In the next section we show that $\calO_G$ is a semisimple symmetric space, so it carries a unique (up to scalar multiples) $G$-invariant measure $d\mu_G$. We relate the restriction of the measure $d\mu$ on $\calO$ to the open subset $\FF^\times\calO_G\simeq\FF^\times\times\calO_G$ to the measure $d\mu_G$.

\begin{lemma}\label{lem:MeasureInPolarCoordinates}
    After appropriate normalization of the measures $d\mu$ and $d\mu_G$, the following integral formula holds for all $f\in C_c(\FF^\times\calO_G)$:
    $$ \int_\calO f(x)\,d\mu(x) = \int_{\FF^\times}\int_{\calO_G}f(ty)\,d\mu_G(y)\,|t|^{\delta\frac{rd}{2}}\,d^\times t, $$
    where $d^\times t$ is the multiplication invariant measure on $\FF^\times$. In particular,
    $$ L^2(\calO) \simeq L^2(\calO_G)\otimeshat L^2(\FF^\times,|t|^{\delta\frac{rd}{2}}\,d^\times t). $$
\end{lemma}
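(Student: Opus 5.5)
The plan is to use uniqueness of relatively invariant measures on the open set $\FF^\times\calO_G$. By Proposition~\ref{prop:FOHopendense}, $\FF^\times\calO_G$ (read $\RR_+\calO_G$ in the Euclidean case; all formulas below adapt with $\FF^\times$ replaced by $\RR_+$) is open and dense in $\calO$. The first step is to check that the map
$$ \Phi:\FF^\times\times\calO_G\to\FF^\times\calO_G,\quad (t,y)\mapsto ty, $$
is a diffeomorphism. It is surjective by definition. For injectivity, use the $G$-invariant function $z\mapsto\tau_\FF(z,e)$, which equals $1$ on $\calO_G$: from $ty=t'y'$ we get $t=t'$, hence $y=y'$. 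For the inverse, write $z\mapsto(\tau_\FF(z,e),\,\tau_\FF(z,e)^{-1}z)$; it is smooth on the open set, and $\tau_\FF(z,e)\neq0$ there. In the Euclidean case $\tau_\FF(z,e)>0$, which fits with $\RR_+$.

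Next I would compare invariance properties. The group $H=\FF^\times\times G$ acts on $\FF^\times\calO_G$ by $(s,g)\cdot z=sgz$, and $\Phi$ intertwines this with the action $(s,g)(t,y)=(st,gy)$. Transitivity is clear. For the measure $\mu$, use $d\mu(gx)=|\det_\RR g|^{\frac{rd}{2n}}d\mu(x)$ together with \eqref{eq:DetTrivialOnAut}. This gives that $\mu$ is $G$-invariant, and that it transforms under dilation by $s$ with the character $|s|^{\delta\frac{rd}{2}}$. Dilations and $G$ lie in $\Str(V)_0$ up to the disconnectedness issue; the dilation rule is stated in the paper directly, and $G$-invariance follows from $G\subseteq\Str(V)$ and $|\det g|=1$. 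On the other side, the product measure $\nu=|t|^{\delta\frac{rd}{2}}d^\times t\otimes d\mu_G$ is $G$-invariant and transforms by the same character under $s$. Hence $\Phi^*\mu$ and $\nu$ are both relatively invariant Radon measures on the homogeneous space $H/H_c$ with the same character.

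The key step is uniqueness up to scalar of such measures. I would argue it as follows. The ratio $\nu^{-1}\Phi^*\mu$ is an invariant measure twisted by a trivial character. Equivalently, multiply both by $|t|^{-\delta\frac{rd}{2}}$ to get two $H$-invariant measures on $H/H_c$, where $H_c=\{1\}\times G_c$. An $H$-invariant measure on $H/H_c$ is unique up to a scalar whenever one exists, and here $\nu$ provides one. Explicitly, $d^\times t\otimes d\mu_G$ is invariant, and it is nonzero because $\calO_G$ carries an invariant measure by the next section. So $\Phi^*\mu=C\nu$, and absorbing $C$ into the normalization yields the integral formula for $f\in C_c(\FF^\times\calO_G)$.

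Finally, since the complement of $\FF^\times\calO_G$ is $\mu$-null, $L^2(\calO)=L^2(\FF^\times\calO_G,\mu)$. I would check this by noting that it is a proper closed analytic subset of lower dimension: it is the complement of an open orbit in the connected manifold $\calO$ (resp. each component). Via $\Phi$ this space is $L^2(\FF^\times\times\calO_G,\nu)\simeq L^2(\calO_G)\otimeshat L^2(\FF^\times,|t|^{\delta\frac{rd}{2}}d^\times t)$. I expect the main obstacle to be this null-set claim and the smoothness and transitivity bookkeeping. The algebraic identification of the characters is routine.
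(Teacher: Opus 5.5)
Your proposal is correct and follows essentially the same route as the paper: $G$-invariance of $\mu$ from $|\det g|=1$, then the dilation rule $d\mu(zx)=|z|^{\delta\frac{rd}{2}}d\mu(x)$ fixes the radial density. Your uniqueness-of-invariant-measures argument on $H/H_c$ just spells out the paper's step ``$d\mu=\chi(t)\,d^\times t\,d\mu_G$''. The paper never states that the complement of $\FF^\times\calO_G$ is $\mu$-null, and your justification for it needs one more ingredient. Calling the complement ``the complement of an open orbit'' gives neither analyticity nor measure zero by itself. Instead, note that the symmetric subgroup $G$ has only finitely many orbits on $L/Q_1\simeq\calO/\FF^\times$, so the complement is a finite union of lower-dimensional orbits.
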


\begin{proof}
    The character by which the measure $d\mu$ transforms is a power of the absolute value of the determinant, hence it is trivial on $G$ by \eqref{eq:DetTrivialOnAut} and the measure $d\mu$ is $G$-invariant. It follows that $d\mu=\chi(t)\,d^\times t\,d\mu_G$ for some function $\chi$ on $\FF^\times$. Moreover, $d\mu(zx)=|z|^{\delta\frac{rd}{2}}\,d\mu(x)$ and the claim follows.
\end{proof}

\subsection{Structure of \texorpdfstring{$\calO_G$}{OH}}

Let $G_c$ denote the stabilizer of $c$ in $G$, so that $\calO_G\simeq G/G_c$. The pair $(G,G_c)$ turns out to be a symmetric pair, so $\calO_G$ is a semisimple symmetric space.

\begin{lemma}\label{lem:OHisSymmetricSpace}
    Let $w=\exp(\pi(c,0,-c))$, then $w\in G$ is the identity on $V(c,1)\oplus V(c,0)$ and minus the identity on $V(c,\frac{1}{2})$. Moreover, $\sigma:G\to G,\,h\mapsto whw^{-1}$ defines an involution on $G$ and the fixed point subgroup $G^\sigma$ of $G$ agrees with $G_c$, except in the case $r=2$ where $G_c$ is a subgroup of $G^\sigma$ of index two. In particular, $\calO_G\simeq G/G_c$ is a semisimple symmetric space, compact if $V$ is Euclidean and non-compact if $V$ is non-Euclidean.
\end{lemma}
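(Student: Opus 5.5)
The plan is to view $X=(c,0,-c)$ inside the $\sl(2)$-triple attached to the idempotent $c$. Set $E_c=(c,0,0)$, $F_c=(0,0,c)$ and $H_c=(0,2L(c),0)$, and define $E_{e-c},F_{e-c},H_{e-c}$ analogously. By \eqref{eq:LieBracketInCO}, each triple spans a copy of $\sl(2,\RR)$. The two copies commute, because $c\Box(e-c)=L(c(e-c))+[L(c),L(e-c)]=0$: orthogonal idempotents give commuting multiplication operators. Moreover $E=E_c+E_{e-c}$, $F=F_c+F_{e-c}$ and $H=H_c+H_{e-c}$, and $X=E_c-F_c$.

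In any finite-dimensional representation of $\sl(2)$, the element $\exp(\pi(E_c-F_c))$ is the image of $-I\in\SL(2,\RR)$. It therefore acts by $(-1)^k$ on the $H_c$-weight-$k$ vectors. Consequently $\Ad(w)$ is trivial on the span of $E_c,F_c,H_c$, since that span carries only even weights. It is also trivial on the copy for $e-c$, which commutes with $X$. Hence $\Ad(w)$ fixes $E,F,H$, so $w$ centralizes $\frakg'$. Since $w=\exp(\pi X)$ lies in the identity component, the argument of the dual pair lemma shows $w\in\Str(V)$ with $we=e$. Lemma~\ref{lem:CharacterizationH} then gives $w\in G$.

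Next I determine how $w$ acts on $V$. The $H_c$-weight of $(u,0,0)$ equals $2\lambda$ for $u\in V(c,\lambda)$, so $\Ad(w)(u,0,0)=(-1)^{2\lambda}(u,0,0)$. Because $w\in\Str(V)$ acts on the first summand of \eqref{eq:GelfandNaimark} through its linear action on $V$, it follows that $w$ is $+1$ on $V(c,1)\oplus V(c,0)$ and $-1$ on $V(c,\frac12)$. In particular $w^2=\id$, so $\sigma$ is an involution of $G$.

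For $G_c\subseteq G^\sigma$: if $h\in G$ fixes $c$, then $h$ commutes with $L(c)$, since $h$ is an automorphism. Hence $h$ preserves the Peirce spaces of $c$ and commutes with $w$.

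Conversely, if $h\in G^\sigma$, then $h$ preserves the $+1$-eigenspace $W=V(c,1)\oplus V(c,0)=\FF c\oplus V(c,0)$. This $W$ is a Jordan subalgebra with unit $e$, and its simple ideals are $\FF c$ and $V(c,0)$; the latter is simple of rank $r-1$. The restriction $h|_W$ is an automorphism, so it permutes these minimal ideals.

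If $r>2$, the ideal $V(c,0)$ has dimension $>1$, so $h$ preserves $\FF c$. Since $c$ is the unit of that ideal, $hc=c$, giving $G^\sigma=G_c$. If $r=2$, then $V(c,0)=\FF(e-c)$, and $h$ may interchange $c$ and $e-c$. The nontrivial central element of $\Aut(V)$, which is $-1$ on $e^\perp$, maps $c=\frac e2+(c-\frac e2)$ to $e-c$. So in this case $G_c$ has index exactly two in $G^\sigma$.

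Thus $\calO_G\simeq G/G_c$ is a symmetric space; for $r=2$ it is a double cover of $G/G^\sigma$. It is semisimple because $\frakg$ is semisimple. If $V$ is Euclidean, $G$ preserves the positive definite form $\tau$ and is compact. If $V$ is non-Euclidean, I would pick a nonzero $x\in V_{12}^-$ and use Lemma~\ref{lem:VijkIso} and \eqref{eq:Lxsquare}: the element $[L(c_1),L(x)]\in\frakg_{12}$ is then self-adjoint for $(\cdot|\cdot)$, with nonzero real eigenvalues on $c_1$. Consequently the orbit of $c=c_1$ is unbounded and $\calO_G$ is non-compact.

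I expect the main obstacle to be the identification of $w$ as an element of $G$ with the stated action on $V$. This requires care that $\Ad(w)$ really determines $w$ on $V$, rather than only up to center or covering issues. Going through the triple-commutation argument above should settle it.
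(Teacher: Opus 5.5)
Your identification of $w$ and your comparison of $G^\sigma$ with $G_c$ are correct, and they follow the paper's argument. The $\sl(2)$-weight argument is a cleaner version of the paper's explicit formula for $\exp(t\ad(c,0,-c))(0,\id,0)$. The step that fails is your witness for index two when $r=2$.

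The element $z$ acting by $-1$ on $e^\perp$ is central in $\Aut(V)$. But you need it in $G=\calG\cap\Aut(V)$, and in general it is not there. Under $\co(V)\cong\so(p+1,q+1)$ resp.\ $\so(n+2,\CC)$, with $n=\dim_\FF V$, $\Ad(z)$ is conjugation by the orthogonal transformation $z\oplus\id$ of $V\oplus\FF^2$, whose determinant is $(-1)^{n-1}$. For $n$ even this is conjugation by an orthogonal matrix of determinant $-1$ in even dimension, hence not an inner automorphism. On the other hand, $\Ad(\calG)$ consists of inner automorphisms: $\Co(V)_0$ is connected, and $\Ad(-\id_V)$ is conjugation by a matrix of determinant $(-1)^n=1$. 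So $z\notin G$ whenever $n$ is even. This covers every non-Euclidean case $\RR^{p,q}$ considered here, since $p+q$ is even. A concrete example is $V=\Herm(2,\CC)$: there $G\cong\SO(3)$ has trivial center, and $z(X)=(\tr X)I-X$ involves complex conjugation, so it is not in $\calG\cong\mathrm{PSU}(2,2)$.

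Your ideal-permutation argument still correctly gives $[G^\sigma:G_c]\le 2$. What is missing is an element of $G$ itself, not merely of $\Aut(V)$, that exchanges $c$ and $e-c$. Since $\exp(\frakg)\subseteq G$, you can take $h_0=\exp(\frac{\pi}{2}T_0)$ with $T_0=[L(c_1),L(t_0)]$ and $t_0\in V_{12}^+$, $\tau(t_0,t_0)=32$, whenever $V_{12}^+\neq 0$ (for instance in the Euclidean and complex cases). By the computation in the proof of Lemma~\ref{lem:TorusStabilizer}, $h_0$ fixes $c_1+c_2=e$ and negates $c_1-c_2$, so $h_0c=e-c$. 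Then $h_0L(c)h_0^{-1}=\id-L(c)$, so $h_0$ preserves the eigenspaces of $w$ and lies in $G^\sigma$. In any remaining case you still have to exhibit a suitable element of $G$; the paper itself only asserts that such an element is easy to construct.
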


\begin{proof}
    It follows from the formula \eqref{eq:LieBracketInCO} for the Lie bracket of $\co(V)$ that
    $$\exp(t\ad(c,0,-c))(0,\id,0)=(-\tfrac{1}{2}\sin(2t)c,\id+(\cos(2t)-1)L(c),-\tfrac{1}{2}\sin(2t)c),$$
    so that $\Ad(w)(0,\id,0)=(0,\id,0)$. But since $(0,\id,0)\in\co(V)$ induces the grading \eqref{eq:GelfandNaimark}, this implies that $w\in\calG\cap\Str(V)$. Moreover, a similar computation shows that $w$ acts on $V(c,1)\oplus V(c,0)$ by $+1$ and on $V(c,\frac{1}{2})$ by $-1$, so it is an automorphism.
    
    It is clear that $\sigma$ defines an involution on $G$. We now show that $G_c=G^\sigma$ for $r>2$ and that $G_c\subseteq G^\sigma$ is of index two for $r=2$. Assume first that $h\in G^\sigma$, then $h$ leaves $V(c,1)\oplus V(c,0)$ invariant. If $r>2$, then $V(c,1)$ and $V(c,0)$ are both simple and non-isomorphic, so the automorphism $h$ must map $V(c,1)$ onto itself. Finally, $hc$ must be an idempotent in $V(c,1)=\FF c$, but $c$ is the only idempotent in $\FF c$. It follows that $hc=c$, so $h\in G_c$. For $r=2$, one can easily construct an element $h_0$ in $G^\sigma$ that maps $c$ to $e-c$ and vice versa. The same arguments as above show that either $h$ is contained in $G_c$ or in $hG_c$. If conversely $h\in G_c$, then $hL(c)h^{-1}=L(hc)=L(c)$. This implies that $h$ leaves the eigenspaces of $L(c)$ invariant, hence it has to commute with $w$, so $h\in G^\sigma$.
\end{proof}

Abusing notation, we also write $\sigma$ for the involution on the Lie algebra $\frakg$ of $G$. To study the symmetric space $\calO_G=G/G_c$ in more detail, we write $\frakg=\frakg^\sigma\oplus\frakg^{-\sigma}$ for the decomposition into the eigenspaces of $\sigma$. The Cartan involution $\theta$ on $\str(V)$ restricts to a Cartan involution on $\frakg$ that commutes with $\sigma$, because $\vartheta(c)=c$. Note that $\theta$ is trivial on $\frakg$ if $V$ is Euclidean since in this case $G$ is compact.

\begin{lemma}\label{lem:MaxTorus}
    We have
    $$ \frakg^{-\sigma} = [L(c),L(V(c,\tfrac{1}{2}))] \qquad \mbox{and} \qquad \frakg^{-\sigma}\cap\frakg^{\pm\theta} = [L(c),L(V(c,\tfrac{1}{2})^\pm)]. $$
    Moreover, $\calO_G$ is a symmetric space of split rank $1$ and rank $\delta=\dim_\RR\FF\in\{1,2\}$.
\end{lemma}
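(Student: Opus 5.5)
We fix a Jordan frame $c_1,\ldots,c_r$ of $V^+$ with $c=c_1$, which is possible since $c$ is a primitive idempotent with $\vartheta(c)=c$. Then $V(c,1)=V_{11}$, $V(c,\frac12)=\bigoplus_{j\geq2}V_{1j}$ and $V(c,0)=\bigoplus_{2\leq i\leq j}V_{ij}$. We use the decomposition \eqref{eq:DecompositionLieAlgH} of $\frakg$ and compute $\sigma=\Ad(w)$ summand by summand, using that by Lemma~\ref{lem:OHisSymmetricSpace} $w$ is $+1$ on $V(c,1)\oplus V(c,0)$ and $-1$ on $V(c,\frac12)$. A derivation $D$ is $\sigma$-fixed if and only if it commutes with $w$, i.e. preserves the two eigenspaces of $w$.

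\emph{The $+1$ part.} Elements of $\frakg_0$ kill $c$. Hence $[D,L(c)]=L(Dc)=0$, so they commute with $L(c)$ and preserve its eigenspaces. For $i,j\geq2$, the elements $[L(c_i),L(x)]$ with $x\in V_{ij}$ commute with $L(c_1)$: this follows from the Peirce rules \eqref{eq:VijkMultiplication}, \eqref{eq:VijMultiplication} (they kill $c_1$, being derivations $[L(c_i),L(x)]$ with $c_1x=0$ and $c_1c_i=0$). So all of these lie in $\frakg^\sigma$.

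\emph{The $-1$ part.} For $x\in V_{1j}$, the operator $L(x)$ maps $V(c,\frac12)$ into $V(c,1)\oplus V(c,0)$ and conversely, by the Peirce rules, while $L(c)$ preserves both. Hence $[L(c),L(x)]$ anticommutes with $w$, i.e. $\sigma=-1$ on $\frakg_{1j}$.

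This gives $\frakg^{-\sigma}=\bigoplus_{j\geq2}\frakg_{1j}=[L(c),L(V(c,\frac12))]$. For the $\theta$-refinement, the Cartan involution on $\frakg$ is $D\mapsto\vartheta D\vartheta$, and $\vartheta[L(c),L(x)]\vartheta=[L(c),L(\vartheta x)]$ since $\vartheta c=c$. Thus $x\in V(c,\frac12)^\pm$ gives exactly the $\pm\theta$ eigenspace. The map $x\mapsto[L(c),L(x)]$ is injective on $V(c,\frac12)$, since applying it to $c$ gives $-\frac12\cdot\frac12 x\neq0$ up to a constant. So the decomposition is direct.

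\emph{The rank statements.} We must show two things. First, maximal abelian subspaces of $\frakg^{-\sigma}\cap\frakp$ (respectively of $\frakg^{-\sigma}\cap\frakk$ when $V$ is Euclidean, where $\frakp=0$) are one-dimensional. Second, Cartan subspaces of $\frakg^{-\sigma}$ have real dimension $\delta$. My plan is to first conjugate by $G_c$ (or $(G_c)_0\cap K$) so that a given $X=[L(c),L(x)]$ has $x\in V_{12}$. This uses that $G_c$ contains automorphisms of the subalgebra $V(c,0)$ permuting the frame $c_2,\ldots,c_r$, and that the Peirce decomposition is compatible. For the reduction, the analogue of the classical fact ``every element of $V(c,\frac12)$ lies in $V_{12}$ for a suitable frame of $V(c,0)$'' would be invoked; this is where I would rely on \cite{FK94} or \cite{Moe10}.

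Next, given $x\in V_{12}^\pm$ nonzero, I compute the centralizer of $X$ in $\frakg^{-\sigma}$. For $y\in V_{1j}$ with $j\geq3$, the bracket $[X,[L(c),L(y)]]$ lies in $\frakg_{2j}$ by \eqref{eq:CommutationInclusions}. Its component is essentially $[L(c_2),L(xy)]$, and $xy\neq0$ by Lemma~\ref{lem:VijkIso}; so these never commute with $X$. For $y\in V_{12}$, everything happens inside the rank-two split subalgebra $V_{11}\oplus V_{12}\oplus V_{22}$, which is a spin factor. There the bracket acts on $V_{12}$ as a ``rotation'' $z\mapsto\tau_\FF(x,z)y-\tau_\FF(y,z)x$ up to a constant. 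This vanishes iff $y\in\FF x$. Consequently the centralizer is $\FF X$, of real dimension $\delta$. Intersected with $\frakp$ (or with the relevant $\theta$-eigenspace) it is $\RR X$, since in the complex case $iX$ lies in the opposite $\theta$-eigenspace. This gives rank $\delta$ and split rank $1$.

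I expect the main obstacle to be the conjugation step into $V_{12}$, together with verifying the explicit bracket formula on $V_{12}$ in the non-Euclidean case, where $\tau_\FF$ is indefinite. The indefiniteness only matters through $x\neq0$ with $\tau_\FF(x,x)\neq0$, which holds for $x\in V_{12}^\pm$ as in the proof of Lemma~\ref{lem:VijkIso}.
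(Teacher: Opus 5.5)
Your computation of $\frakg^{-\sigma}$ and of its $\theta$-eigenspaces is correct. It matches the paper's argument: you use that $w$ is a polynomial in $L(c)$ and that $L(x)$, $x\in V(c,\tfrac12)$, anticommutes with $w$, where the paper checks the action of $w$ on each $V_{ij}$. Your centralizer computation is also right. On $V_{12}$ one gets $\bigl[[L(c_1),L(x)],[L(c_1),L(y)]\bigr]z=\tfrac18\bigl(\tau_\FF(x,z)y-\tau_\FF(y,z)x\bigr)$, and evaluating at $z=x$ is exactly the paper's computation. The components $y_{1j}$, $j\geq3$, can be handled separately because the brackets land in the distinct summands $\frakg_0$ and $\frakg_{2j}$ of a direct sum.

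The gap is the conjugation step you put in front of this.

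\emph{As formulated it is false.} The claim that every element of $V(c,\tfrac12)$ lies in $V_{12}$ for a suitable frame of $V(c,0)$ fails for non-Euclidean $V$. Take $V=M(3,\RR)$, $c=E_{11}$ and $x=E_{12}+E_{31}\in V(c,\tfrac12)$. An adapted $c_2$ would be a rank-one idempotent $p$ in the lower $2\times 2$ block with $pe_3=e_3$ and $e_2^{\top}p=e_2^{\top}$. This is impossible, since $e_2^{\top}e_3=0$.

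\emph{Restricted, it is circular.} For $x\in V(c,\tfrac12)^\pm$, with frames taken in $V^+$, the statement is true. But it says that every nonzero element of $\frakg^{-\sigma}\cap\frakg^{\pm\theta}$ can be brought to the form $[L(c_1),L(x)]$ with $x\in V_{12}^\pm$. That is essentially equivalent to the rank-one claim you are proving, and you give no argument for it.

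\emph{You do not need it.} The involution $\theta$ restricts to a Cartan involution of the reductive algebra $\frakg^{\sigma\theta}=(\frakg^\sigma\cap\frakk)\oplus(\frakg^{-\sigma}\cap\frakp)$. Hence all maximal abelian subspaces of $\frakg^{-\sigma}\cap\frakp$ are conjugate under $(G_c\cap K)_0$. The same holds for $\frakg^{-\sigma}\cap\frakk$, via the compact symmetric pair $(\frakk,\frakg^\sigma\cap\frakk)$. So it suffices to show that $\RR X$ is maximal abelian for a single $x\in V_{12}^\pm\setminus\{0\}$, which your computation does. Moreover, $\FF X$ is then a maximal abelian subspace of $\frakg^{-\sigma}$ consisting of semisimple elements, hence a Cartan subspace. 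This gives rank $\delta$, because all Cartan subspaces have the same dimension. This is precisely the paper's proof.
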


\begin{proof}
    We complete $c_1=c$ to a Jordan fram $c_1,\ldots,c_r$ of $V$ and recall the corresponding decomposition \eqref{eq:DecompositionLieAlgH} of $\frakg$. Since $w$ acts by $+1$ on $V_{ij}$ if either $i=j=1$ or $i,j>1$ and by $-1$ if $i=1$ and $j\neq1$, the involution $\sigma$ clearly acts by $-1$ on $\frakg_{1j}=[L(c_1),L(V_{1j})]$ for $j>1$ and by $+1$ on $\frakg_{ij}=[L(c_i),L(V_{ij})]$ for $i,j>1$. Moreover, each $D\in\frakg_0$ preserves the subspaces $V_{ij}$ and is therefore fixed by the involution $\sigma$. This shows that
    $$ \frakg^{-\sigma}=\bigoplus_{j=2}^{r}\frakg_{1j} = [L(c_1),L(V(c_1,\tfrac{1}{2}))]. $$
    The Cartan involution $\vartheta$ fixes $c_1$, so the corresponding Cartan involution $\theta$ of $\frakl$ maps $[L(c_1),L(x)]$ to $[L(c_1),L(\vartheta x)]$. This shows the formula for $\frakg^{-\sigma}\cap\frakg^{\pm\theta}$.

    To show the remaining claims, we let $x\in V_{12}^\pm$, $x\neq0$, and show that $\RR[L(c_1),L(x)]$ is maximal abelian in $\frakg^{-\sigma}\cap\frakg^{\pm\theta}$. So let $y\in V(c_1,\frac{1}{2})$ with
    $$ \big[[L(c_1),L(x)],[L(c_1),L(y)]\big]=0. $$
    %For this, we first note that
    %$$ [[L(c),L(x)],[L(c),L(y)]] = -\tfrac{1}{4}[L(x),L(y)]+\tfrac{1}{2}[L(c),L((xy)_1)]. $$
    Write $y=\sum_{i=2}^r y_{1i}$ with $y_{1i}\in V_{1i}$. We first consider $i\neq2$, then $[L(c_1),L(x)]c_i=0$ and $[L(c_1),L(y)]c_i=\frac{1}{4}y_{1i}$, so
    $$ 0 = \big[[L(c_1),L(x)],[L(c_1),L(y)]\big]c_i = \frac{1}{4}[L(c_1),L(x)]y_{1i} = \frac{1}{4}(c_1(xy_{1i})-\tfrac{1}{2}xy_{1i}) = -\frac{1}{8}xy_{1i} $$
    since $xy_{1i}\in V_{2i}$ by \eqref{eq:VijkMultiplication}. By Lemma~\ref{lem:VijkIso}, $L(x):V_{1i}\to V_{2i}$ is an isomorphism, so $y_{1i}=0$ for all $i>2$ and hence $y=y_{12}\in V_{12}$. Next, we apply $\big[[L(c_1),L(x)],[L(c_1),L(y)]\big]$ to $x$. By \eqref{eq:VijMultiplication}, $xy\in V_{11}\oplus V_{22}=\FF c_1\oplus\FF c_2$ we can write $xy=\lambda c_1+\mu c_2$. Moreover, $\lambda=\tau(xy,c_1)=\tau(x,yc_1)=\frac{1}{2}\tau(x,y)=\tau(x,yc_2)=\tau(xy,c_2)=\mu$, so $xy=\frac{1}{2}\tau(x,y)(c_1+c_2)$. Using this, we find 
    \begin{align*}
        0 = \big[[L(c_1),L(x)],[L(c_1),L(y)]\big]x &= \frac{1}{4}\tau(x,y)[L(c_1),L(x)](c_1-c_2) - \frac{1}{4}\tau(x,x)[L(c_1),L(y)](c_1-c_2)\\
        &= -\frac{1}{8}\tau(x,y)x+\frac{1}{8}\tau(x,x)y.
    \end{align*}
    Since $x\in V_{12}^\pm$, it follows that $\tau(x,x)=\pm(x|x)\neq0$, so $x$ and $y$ are linearly dependent.
\end{proof}

If $V$ is Euclidean, then $\calO_G$ is a compact symmetric space, otherwise it is a non-compact pseudo-Riemannian symmetric space which we now study in more detail in terms of a maximal compact and a maximal split torus in $\frakg^{-\sigma}$. For this, recall the decomposition \eqref{eq:DecompositionLieAlgH} of $\frakg$ into $\frakg_0$ and $\frakg_{ij}$ ($i<j$) as well as the commutator inclusions \eqref{eq:CommutationInclusions}. Let $x\in V_{12}^\pm\setminus\{0\}$ and put $D_0=[L(c_1),L(x)]\in\frakg_{12}$. We first collect some formulas for commutators between $D_0\in \frakg_{12}$ and elements in the above decomposition.

\begin{lemma}\label{lem:D0commutators}
    \begin{enumerate}[(1)]
        \item For $y\in V_{12}$ with $\tau_\FF(x,y)=0$ we have
        $$ [D_0,[L(c_1),L(y)]] = -\frac{1}{4}[L(x),L(y)] \quad \mbox{and} \quad [D_0,[L(x),L(y)]] = \frac{1}{2}\tau(x,x)[L(c_1),L(y)]. $$
        \item For $y\in V_{1j}$ and $z\in V_{2j}$, $j>2$, we have
        $$ [D_0,[L(c_1),L(y)]] = -\frac{1}{2}[L(c_2),L(xy)] \quad \mbox{and} \quad [D_0,[L(c_2),L(z)]] = \frac{1}{2}[L(c_1),L(xz)]. $$
        \item $[D_0,\frakg_{ij}]=\{0\}$ for $i,j>2$.
    \end{enumerate}
\end{lemma}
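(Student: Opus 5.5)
The plan is to reduce every bracket to the action of derivations on $V$, using two facts. First, for a derivation $D\in\aut(V)$ and $a\in V$ we have $[D,L(a)]=L(Da)$. Consequently
$$[D,[L(a),L(b)]]=[L(Da),L(b)]+[L(a),L(Db)].$$
Second, the Peirce rules \eqref{eq:VijkMultiplication} and \eqref{eq:VijMultiplication} let us compute the needed values of $D_0=[L(c_1),L(x)]$ explicitly. Since $c_1x=\frac12x$, we get
$$D_0c_1=c_1(xc_1)-x c_1=-\tfrac14x, \qquad D_0c_2=c_1(xc_2)-x(c_1c_2)=\tfrac14x.$$
From the proof of Lemma~\ref{lem:MaxTorus} we have $xy=\frac12\tau_\FF(x,y)(c_1+c_2)$ for $y\in V_{12}$. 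Note that $\tau_\FF(x,x)=\tau(x,x)$, because $x\in V^\pm_{12}$. Hence $x^2=\frac12\tau(x,x)(c_1+c_2)$, and so
$$D_0x=c_1x^2-\tfrac12x^2=\tfrac14\tau(x,x)(c_1-c_2).$$

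For (1), let $y\in V_{12}$ with $\tau_\FF(x,y)=0$. Then $xy=0$, so $D_0y=c_1(xy)-\frac12xy=0$. The expansion gives $[D_0,[L(c_1),L(y)]]=[L(-\frac14x),L(y)]$, which is the first formula. For the second formula, the expansion gives
$$[D_0,[L(x),L(y)]]=\tfrac14\tau(x,x)[L(c_1-c_2),L(y)].$$
We then use $[L(c_2),L(y)]=-[L(c_1),L(y)]$ for $y\in V_{12}$ (\cite[Lemma 1.4.4]{Moe10}), which turns the right-hand side into $\frac12\tau(x,x)[L(c_1),L(y)]$.

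For (3), take $i,j>2$. Then $D_0$ kills $c_i$, $c_j$ and $V_{ij}$. Indeed, $xc_i=0$, and $xv=0$ for $v\in V_{ij}$ because the index sets are disjoint (\cite[Proposition IV.1.4]{FK94}). Moreover $c_1c_i=0$ and $c_1v=0$. So both terms in the expansion vanish for $[D_0,[L(c_i),L(v)]]$.

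For (2) the expansion alone produces the mixed term $[L(x),L(y)]$, which is not obviously of the claimed form. This conversion is the main obstacle, and I would avoid it with an injectivity argument. By \eqref{eq:CommutationInclusions}, $[D_0,[L(c_1),L(y)]]$ lies in $\frakg_{2j}$ and $[D_0,[L(c_2),L(z)]]$ lies in $\frakg_{1j}$. Next, for $i\neq j$ and $w\in V_{ij}$ we have $[L(c_i),L(w)]c_i=\frac14w-\frac12w=-\frac14w$. Hence evaluation at $c_i$ is injective on $\frakg_{ij}=[L(c_i),L(V_{ij})]$, and both identities can be checked by evaluating at $c_2$, respectively at $c_1$.

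For the first identity in (2), write $D'=[L(c_1),L(y)]$ with $y\in V_{1j}$. Then
$$[D_0,D']c_2=D_0(D'c_2)-D'(D_0c_2)=0-\tfrac14D'x,$$
because $D'c_2=c_1(yc_2)-y(c_1c_2)=0$. Using $xy\in V_{2j}$, so that $c_1(xy)=0$, we get
$$D'x=c_1(yx)-y(c_1x)=-\tfrac12xy.$$
So the left-hand side sends $c_2$ to $\frac18xy$. On the other side, $-\frac12[L(c_2),L(xy)]c_2=-\frac12\cdot(-\frac14)xy=\frac18xy$. The two elements of $\frakg_{2j}$ agree at $c_2$, hence they are equal.

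The second identity in (2) is checked the same way at $c_1$. Write $D''=[L(c_2),L(z)]$ with $z\in V_{2j}$. Then $D''c_1=0$ and $D''x=c_2(zx)-z(c_2x)=-\frac12xz$. This gives
$$[D_0,D'']c_1=-D''(D_0c_1)=\tfrac14D''x=-\tfrac18xz,$$
which equals $\frac12[L(c_1),L(xz)]c_1=\frac12\cdot(-\frac14)xz$. Throughout, I would double-check signs and the Peirce vanishing facts such as $c_1(xy)=0$ for $xy\in V_{2j}$.
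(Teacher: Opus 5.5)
Your proposal is correct and follows essentially the paper's route. For (1) you use the derivation rule $[D,L(a)]=L(Da)$ together with the Peirce multiplication rules; for (2) you use the commutator inclusions and then evaluate at an idempotent on which $\frakg_{2j}$ resp. $\frakg_{1j}$ acts injectively. The differences are cosmetic: you expand the second formula of (1) directly via $D_0x=\frac14\tau(x,x)(c_1-c_2)$ and $[L(c_2),L(y)]=-[L(c_1),L(y)]$ where the paper evaluates at $c_1$, you evaluate at $c_2$ resp. $c_1$ in (2) where the paper uses $c_j$, and you verify (3) by hand instead of citing the commutator inclusions.
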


\begin{proof}
    (3) is clear by the commutator inclusions \eqref{eq:CommutationInclusions}. Let us show (1). Since $D_0c_1=-\frac{1}{4}x$ and $D_0y\in V_{11}\oplus V_{22}$ by \eqref{eq:VijMultiplication}, we have
    $$ [D_0,[L(c_1),L(y)]] = [L(D_0c_1),L(y)] + [L(c_1),L(D_0y)] = -\frac{1}{4}[L(x),L(y)]+0, $$
    because $[L(c_1),L(V_{11})]=[L(c_1),L(V_{22})]=\{0\}$ (use \cite[Proposition II.1.1]{FK94} with $y=c_1$). Moreover, by the commutator inclusions \eqref{eq:CommutationInclusions} we have
    $$ [D_0,[L(x),L(y)]] = [L(c_1),L(z)] $$
    for some $z\in V_{12}$. To determine $z$ we apply both sides to $c_1$. The right hand side gives $-\frac{1}{4}z$ while the left hand sides equals
    $$ D_0[L(x),L(y)]c_1-[L(x),L(y)]D_0c_1 = 0+\frac{1}{4}[L(x),L(y)]x = \frac{1}{4}(L(x)^2-L(x^2))y. $$
    Since $x^2=\frac{1}{2}\tau(x,x)(c_1+c_2)$ and $xy=\frac{1}{2}\tau_\FF(x,y)(c_1+c_2)=0$ by \eqref{eq:VijMultiplication}, it follows that $z=\frac{1}{2}\tau(x,x)y$.

    For (2) we argue similarly as above. We know that $[D_0,\frakg_{1j}]\subseteq\frakg_{2j}$ and $[D_0,\frakg_{2j}]\subseteq\frakg_{1j}$ and then apply $[L(c_1),L(y)]$ and $[L(c_1),L(z)]$ to $c_j$ to show the claimed formulas.
\end{proof}

\begin{proposition}\label{prop:RootSystem}
    \begin{enumerate}[(1)]
        \item\label{prop:RootSystem1} Let $h_0\in V_{12}^-$ with $\tau(h_0,h_0)=-32$ and put $H_0=[L(c_1),L(h_0)]$ and $\fraka=\RR H_0$. Then the root system of $(\frakg,\fraka)$ is of the form $\{\pm\alpha\}$, $\{\pm2\alpha\}$ or $\{\pm\alpha,\pm2\alpha\}$, where $\alpha(H_0)=1$, with root spaces
        \begin{align*}
            \frakg^0 &= \{D\in\frakg_0:Dh_0=0\}\oplus\FF H_0\oplus\bigoplus_{3\leq i<j\leq r}\frakg_{ij},\\
            \frakg^{\pm\alpha} &= \left\{[L(c_1),L(y)]\mp\frac{1}{2}[L(c_2),L(h_0y)]:y\in\bigoplus_{j=3}^{r}V_{1j}\right\},\\
            \frakg^{\pm2\alpha} &= \{[L(c_1),L(y)]\mp\frac{1}{8}[L(h_0),L(y)]:y\in V_{12},\tau_\FF(h_0,y)=0\}.
        \end{align*}
        In particular, $\dim_\FF\frakg^{\pm\alpha}=(r-2)d$ and $\dim_\FF\frakg^{\pm2\alpha}=d-1$, so
        $$ \rho_\fraka = \frac{1}{2}\left((\dim_\RR\frakg^{\alpha})\alpha+(\dim_\RR\frakg^{2\alpha})2\alpha\right) = \delta\left(\frac{rd}{2}-1\right)\alpha. $$
        \item\label{prop:RootSystem2} Assume $\FF=\RR$. Let $t_0\in V_{12}^+$ with $\tau(t_0,t_0)=32$ and put $T_0=[L(c_1),L(t_0)]$ and $\frakt=\RR T_0$. Then the root system of $(\frakg_\CC,\frakt_\CC)$ is of the form $\{\pm\beta\}$, $\{\pm2\beta\}$ or $\{\pm\beta,\pm2\beta\}$, where $\beta(iT_0)=1$, with root spaces
        \begin{align*}
            \frakg_\CC^0 &= \{D\in\frakg_{0,\CC}:Dt_0=0\}\oplus\CC T_0\oplus\bigoplus_{3\leq i<j\leq r}(\frakg_{ij})_\CC,\\
            \frakg_\CC^{\pm\beta} &= \left\{[L(c_1),L(y)]\mp\frac{1}{2}i[L(c_2),L(t_0y)]:y\in\bigoplus_{j=3}^{r}(V_{1j})_\CC\right\},\\
            \frakg_\CC^{\pm2\beta} &= \{[L(c_1),L(y)]\mp\frac{1}{8}i[L(t_0),L(y)]:y\in (V_{12})_\CC,\tau(t_0,y)=0\}.
        \end{align*}
        %In particular, $\exp(tT_0)=1$ if and only if
        %$$ t\in\begin{cases}2\pi\ZZ&\mbox{if $r>2$,}\\\pi\ZZ&\mbox{if $r=2$.}\end{cases} $$
        In particular, $\dim\frakg_{\CC}^{\pm\beta}=(r-2)d$ and $\dim\frakg_{\CC}^{\pm2\beta}=d-1$, so 
        $$ \rho_\frakt = \frac{1}{2}\left((\dim\frakg_{\CC}^{\beta})\beta+(\dim\frakg_{\CC}^{2\beta})2\beta\right) = \left(\frac{rd}{2}-1\right)\beta. $$
    \end{enumerate}
\end{proposition}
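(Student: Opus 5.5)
The plan is to diagonalize $\ad(H_0)$ on $\frakg$ directly, using the decomposition \eqref{eq:DecompositionLieAlgH} together with the commutator formulas of Lemma~\ref{lem:D0commutators} applied with $x=h_0$ (so $D_0=H_0$ and $\tau(x,x)=-32$). That $\fraka=\RR H_0$ is maximal abelian in $\frakg^{-\sigma}\cap\frakg^{-\theta}$ is already contained in the proof of Lemma~\ref{lem:MaxTorus}. It therefore remains to list the eigenvalues of $\ad(H_0)$ on $\frakg$ and identify the eigenspaces. Since $\ad(H_0)$ maps $\frakg_{12}\oplus\frakg_0$ into itself, maps $\frakg_{1j}\oplus\frakg_{2j}$ into itself for each $j\geq3$, and kills $\frakg_{ij}$ for $i,j\geq3$, the work splits into three blocks.

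\textbf{Block $\frakg_{1j}\oplus\frakg_{2j}$, $j\geq3$.} By Lemma~\ref{lem:D0commutators}(2), $\ad(H_0)$ sends $[L(c_1),L(y)]\mapsto-\frac12[L(c_2),L(h_0y)]$ and $[L(c_2),L(z)]\mapsto\frac12[L(c_1),L(h_0z)]$. Applying it twice to $[L(c_1),L(y)]$ gives $-\frac14[L(c_1),L(h_0(h_0y))]$. By \eqref{eq:Lxsquare} this equals $-\frac14\cdot\frac18\tau(h_0,h_0)[L(c_1),L(y)]=[L(c_1),L(y)]$. Hence $\ad(H_0)^2=1$ on this block, the eigenvalues are $\pm1$, and the eigenvectors are $[L(c_1),L(y)]\mp\frac12[L(c_2),L(h_0y)]$ as claimed. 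By Lemma~\ref{lem:VijkIso}, $L(h_0):V_{1j}\to V_{2j}$ is an isomorphism, so these eigenvectors exhaust the block and $\dim_\FF\frakg^{\pm\alpha}=(r-2)d$. Here I am using that $y\mapsto[L(c_1),L(y)]$ is injective on $V_{1j}$, which follows by evaluating at $c_j$.

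\textbf{Block $\frakg_{12}\oplus\frakg_0$.} Write $V_{12}=\FF h_0\oplus\{y:\tau_\FF(h_0,y)=0\}$. The element $[L(c_1),L(h_0)]=H_0$ has eigenvalue $0$. For $y$ with $\tau_\FF(h_0,y)=0$, Lemma~\ref{lem:D0commutators}(1) says that $\ad(H_0)$ swaps $[L(c_1),L(y)]$ and $[L(h_0),L(y)]$ with coefficients $-\frac14$ and $\frac12\tau(h_0,h_0)=-16$. Its square is therefore $4$, giving eigenvalues $\pm2$ with eigenvectors $[L(c_1),L(y)]\mp\frac18[L(h_0),L(y)]$; this is a short check. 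These lie in $\frakg_{12}\oplus\frakg_0$, since $[L(h_0),L(y)]\in\frakg_0$ by \eqref{eq:CommutationInclusions}. We get $\dim_\FF\frakg^{\pm2\alpha}=d-1$.

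\textbf{The centralizer.} I expect this to be the main obstacle. One has to show that the remaining part of $\frakg_0$ is exactly $\{D\in\frakg_0:Dh_0=0\}$, i.e.\ that there are no further eigenvectors in $\frakg_0$. For $D\in\frakg_0$ we have $[H_0,D]=-[L(c_1),L(Dh_0)]$, using $Dc_1=0$. This vanishes iff $Dh_0=0$, by the injectivity noted above. Then $\frakg_0\to V_{12}$, $D\mapsto Dh_0$, maps into $h_0^\perp$, because $\tau_\FF(Dh_0,h_0)=0$ by skew-symmetry. Its kernel is the centralizer part, and its image is hit by the elements $[L(h_0),L(y)]$. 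A dimension count then shows that every element of $\frakg_0$ is a sum of a centralizing element and a combination of the $\pm2\alpha$ vectors, so the eigenspace decomposition is complete and no other roots occur. The three possible shapes of the root system correspond to $r=2$ (no $\alpha$), $d=1$ (no $2\alpha$), and the general case. Finally, $\rho_\fraka$ follows by computing $\frac12\delta\bigl((r-2)d+2(d-1)\bigr)\alpha=\delta(\frac{rd}{2}-1)\alpha$.

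Part (2) is identical after complexifying. With $t_0\in V_{12}^+$ we have $\tau(t_0,t_0)=+32$, so the squares become $-1$ and $-4$. This yields eigenvalues $\pm i$ and $\pm2i$ for $\ad(T_0)$, i.e.\ $\beta(iT_0)=1$, with the stated eigenvectors carrying the extra factor $i$. The dimensions and $\rho_\frakt$ are unchanged, apart from the absence of the factor $\delta$, since $\FF=\RR$.
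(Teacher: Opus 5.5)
Your proposal is correct and is essentially the paper's argument: the paper's proof consists only of the instruction to combine Lemma~\ref{lem:D0commutators} with \eqref{eq:Lxsquare} on the blocks of \eqref{eq:DecompositionLieAlgH}, and that is what you do. Your treatment of the centralizer inside $\frakg_0$ via $D\mapsto Dh_0$ makes explicit a step the paper leaves to the reader. It works because $[L(h_0),L(y)]h_0=16y$ for $y\in V_{12}$ with $\tau_\FF(h_0,y)=0$, which gives $\frakg_0=\{D\in\frakg_0:Dh_0=0\}\oplus\{[L(h_0),L(y)]:\tau_\FF(h_0,y)=0\}$.
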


\begin{proof}
    Use Lemma~\ref{lem:D0commutators} and \eqref{eq:Lxsquare}.
\end{proof}

Finally, in the setting of Proposition~\ref{prop:RootSystem}~\eqref{prop:RootSystem2}, we also determine the intersection of the torus $\exp(\frakt)$ with the stabilizer $G_c$.

\begin{lemma}\label{lem:TorusStabilizer}
    Assume $\FF=\RR$, let $t_0\in V_{12}^+$ with $\tau(t_0,t_0)=32$ and put $T_0=[L(c_1),L(t_0)]$. Then $\exp(tT_0)\in G_c$ if and only if $t\in\pi\ZZ$.
\end{lemma}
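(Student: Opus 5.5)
The plan is to compute $\exp(tT_0)c_1$ explicitly by restricting $T_0$ to a small invariant subspace. Since $G_c$ is the stabilizer of $c=c_1$, we have $\exp(tT_0)\in G_c$ if and only if $\exp(tT_0)c_1=c_1$. It therefore suffices to find the orbit curve $t\mapsto\exp(tT_0)c_1$ in closed form.

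First we determine the action of $T_0$ on $W=\RR c_1\oplus\RR c_2\oplus\RR t_0$. Write $c=c_1$ and $x=t_0\in V_{12}$. The Peirce rules give $xc_1=xc_2=\frac12 x$ and $x^2=\frac12\tau(x,x)(c_1+c_2)=16(c_1+c_2)$, which is the formula used in the proof of Lemma~\ref{lem:MaxTorus}. From these we compute
$$ T_0c_1=[L(c_1),L(x)]c_1=c_1(\tfrac12x)-x c_1=\tfrac14x-\tfrac12x=-\tfrac14x. $$
Since $T_0$ is a derivation killing $e=c_1+c_2$, we get $T_0c_2=\frac14x$. Next,
$$ T_0x=c_1x^2-x(c_1x)=16c_1-\tfrac12x^2=16c_1-8(c_1+c_2)=8(c_1-c_2). $$
Set $u=c_1-c_2$. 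Then $T_0u=-\frac12x$ and $T_0x=8u$, and $T_0$ kills $e$. On the plane spanned by $u$ and $x$ we therefore have $T_0^2u=-\frac12T_0x=-4u$, so $T_0$ acts as a rotation of angular frequency $2$. Explicitly,
$$ \exp(tT_0)u=\cos(2t)\,u-\tfrac14\sin(2t)\,x, $$
and hence
$$ \exp(tT_0)c_1=\tfrac12e+\tfrac12\bigl(\cos(2t)u-\tfrac14\sin(2t)x\bigr). $$

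This equals $c_1=\frac12e+\frac12u$ if and only if $\cos(2t)=1$ and $\sin(2t)=0$, that is, if and only if $t\in\pi\ZZ$. Here we use that $u$ and $x$ are linearly independent, which holds because they lie in different Peirce spaces.

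The main obstacle is only bookkeeping: one must check that the normalization $\tau(t_0,t_0)=32$ really gives $x^2=16(c_1+c_2)$. This relies on the identity $xy=\frac12\tau_\FF(x,y)(c_1+c_2)$ for $x,y\in V_{12}$ from the proof of Lemma~\ref{lem:MaxTorus}, taken with $y=x$. One must also get the signs in $[L(c_1),L(x)]$ right. The sign conventions do not affect the period, since the period comes only from $T_0^2=-4$ on $\mathrm{span}(u,x)$. As a sanity check, at $t=\pi/2$ we get $\exp(tT_0)c_1=c_2\neq c_1$. This is consistent with the statement, and for $r=2$ it is consistent with $G^\sigma$ containing an element swapping $c_1$ and $c_2$.
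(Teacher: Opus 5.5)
Your proof is correct and follows the paper's argument essentially verbatim. The paper likewise uses $t_0^2=16(c_1+c_2)$ to get $T_0(c_1-c_2)=-\frac12 t_0$ and $T_0t_0=8(c_1-c_2)$, exponentiates to $\exp(sT_0)(c_1-c_2)=\cos(2s)(c_1-c_2)-\frac14\sin(2s)t_0$, and combines this with $\exp(sT_0)(c_1+c_2)=c_1+c_2$; your remark that $c_1-c_2$ and $t_0$ are linearly independent supplies a detail the paper leaves implicit.
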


\begin{proof}
    Using $t_0^2=\frac{1}{2}\tau(t_0,t_0)(c_1+c_2)=16(c_1+c_2)$, a short computation reveals that $T_0(c_1-c_2)=-\frac{1}{2}t_0$ and $T_0t_0=8(c_1-c_2)$, so
    $$ \exp(sT_0)(c_1-c_2) = \cos(2s)(c_1-c_2)-\frac{1}{4}\sin(2s)t_0. $$
    Since $\exp(sT_0)(c_1+c_2)=c_1+c_2$ for all $s\in\RR$, this shows the claim.
\end{proof}

\section{Theta correspondence vs. Plancherel formula}

In this section we relate the decomposition of $\Pi_\min$ into irreducible representations of $G\times G'$ to the Plancherel formula for $L^2(\calO_G)$. For this we first recall the representation theory of $\PGL(2,\FF)$ and then study intertwining operators between representations of $G\times G'$ and $\Pi_\min|_{G\times G'}$.

\subsection{Representations of $\PSL(2,\FF)$ and $\PGL(2,\FF)$}\label{sec:RepPGL2}

We recall the representation theory of $\PGL(2,\FF)$ in order to fix notation (see e.g. \cite[Chapter 2, \S4]{Kna86} for details). For $m\in\ZZ$ and $\nu\in\CC$ we consider the principal series representation $\sigma_{m,\nu}$ of $\PGL(2,\FF)$ induced from the character
$$ \begin{pmatrix}a&b\\0&1\end{pmatrix}\mapsto |a|_m^\nu := |a|^\nu\left(\frac{a}{|a|}\right)^m. $$
Note that for $\FF=\RR$ the representation only depends on $m$ modulo $2$, so we will consider $m$ as an element of $\ZZ/2\ZZ$ in this case.

\begin{fact}
    The infinite-dimensional irreducible unitary representations of $\PGL(2,\FF)$ are:
    \begin{itemize}
        \item The unitary principal series $\sigma_{m,\nu}$ with $\nu\in i\RR$ and $m\in\ZZ/2\ZZ$ for $\FF=\RR$ and $m\in\ZZ$ for $\FF=\CC$. The only equivalences among them are $\sigma_{m,\nu}\simeq\sigma_{-m,-\nu}$.
        \item The complementary series $\sigma_{m,\nu}$ with $\nu\in(-\frac{\delta}{2},\frac{\delta}{2})\setminus\{0\}$ and $m\in\ZZ/2\ZZ$ for $\FF=\RR$ and $m=0$ for $\FF=\CC$, where $\delta=\dim_\RR\FF\in\{1,2\}$. The only equivalences among them are $\sigma_{m,\nu}\simeq\sigma_{-m,-\nu}$.
        \item For $\FF=\RR$ the discrete series of parameter $k\geq2$ even, which is the quotient of $\sigma_{m,\nu}$ for $\nu=\frac{1-k}{2}$ and arbitrary $m\in\ZZ/2\ZZ$ by the kernel of the standard intertwining operator $A_{m,\nu}$.
    \end{itemize}
    All irreducible unitary representations are irreducible when restricted to a parabolic subgroup.
\end{fact}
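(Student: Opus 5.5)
This Fact is the classical classification of the unitary dual of $\PGL(2,\FF)$, so I would not reprove it from scratch. Instead I would reduce it to the known classifications for $\SL(2,\RR)$ and $\SL(2,\CC)$ (Bargmann, Gelfand--Naimark), as recorded in \cite[Chapter 2, \S4]{Kna86}, and then track what changes when passing to $\PGL(2,\FF)$.

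\emph{Complex case.} Here $\PGL(2,\CC)=\PSL(2,\CC)=\SL(2,\CC)/\{\pm1\}$. So the irreducible unitary representations of $\PGL(2,\CC)$ are exactly those of $\SL(2,\CC)$ on which $-1$ acts trivially.

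The Gelfand--Naimark list consists of the unitary principal series, induced from the characters $\mathrm{diag}(a,a^{-1})\mapsto|a|^{2\nu}(a/|a|)^{m'}$ with $\nu\in i\RR$, together with the complementary series, which has $m'=0$ and $\nu$ real in $(-1,1)$. The central element $-1$ acts by $(-1)^{m'}$, so we need $m'$ even.

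Next I would compare with the induction used in the paper, from $\begin{pmatrix}a&b\\0&1\end{pmatrix}$. Writing this element as a scalar times $\mathrm{diag}(a^{1/2},a^{-1/2})$ identifies $|a|_m^\nu$ with the $\SL(2,\CC)$-parameters $m'=2m$ and $\nu$ (up to the normalization $|a|_\CC=|a|^2$). This gives exactly the two lists claimed, with complementary series range $\nu\in(-1,1)=(-\frac{\delta}{2},\frac{\delta}{2})$ when $\delta=2$. The equivalences $\sigma_{m,\nu}\simeq\sigma_{-m,-\nu}$ come from the Weyl group element, and no other equivalences occur, by comparing infinitesimal characters and $K$-types.

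\emph{Real case.} Here $\PGL(2,\RR)\supset\PSL(2,\RR)$ with index two, the quotient being detected by $\sgn\det$. I would apply Clifford--Mackey theory to this index-two subgroup.

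The infinite-dimensional unitary dual of $\PSL(2,\RR)$ consists of the principal series (even $K$-types), the complementary series with $\nu\in(-\frac12,\frac12)$, and the holomorphic and antiholomorphic discrete series $D_k^\pm$ with $k\ge2$ even. Conjugation by $\mathrm{diag}(1,-1)$ fixes each principal and complementary series representation and swaps $D_k^+$ with $D_k^-$.

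Mackey theory then gives the following.
\begin{enumerate}[(i)]
\item Each fixed representation extends in exactly two ways, differing by twisting with $\sgn\det$. These extensions are $\sigma_{0,\nu}$ and $\sigma_{1,\nu}$, since the character $|a|_m^\nu$ restricted to $a>0$ forgets $m$.
\item Each swapped pair induces to a single irreducible representation of $\PGL(2,\RR)$, namely $D_k^+\oplus D_k^-$. This is the discrete series of parameter $k$.
\end{enumerate}

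To realize the discrete series as the quotient of $\sigma_{m,\nu}$ at $\nu=\frac{1-k}{2}$ by $\ker A_{m,\nu}$, I would use the $K$-type description. The representation $\sigma_{m,\nu}$ has $K$-types $2\ZZ$ (for $\PSL$), and at this $\nu$ it has the finite-dimensional subrepresentation with $K$-types $|n|<k$. That subrepresentation is precisely the kernel of the standard intertwiner, and the image is $D_k^+\oplus D_k^-$, which is independent of $m$. The only step requiring care is keeping the normalization of $\nu$ consistent between the induction from $\begin{pmatrix}a&b\\0&1\end{pmatrix}$ and the $\SL(2)$ conventions.

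\emph{Last claim (irreducibility on restriction to a parabolic subgroup).} I would use Kirillov/Mackey theory for the group $P=\{\begin{pmatrix}a&b\\0&1\end{pmatrix}\}\simeq\FF\rtimes\FF^\times$. Because $\FF^\times$ acts transitively on the nontrivial characters of $\FF$, $P$ has a unique infinite-dimensional irreducible unitary representation, $L^2(\FF^\times)$ with $N$ acting by characters.

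By Mackey's subgroup theorem, the restriction of a principal series to $P$ is supported on the open Bruhat cell. There it is $L^2(\FF)$ with $P$ acting affinely, and Fourier transform identifies this with the $\FF^\times$-induced model, which is irreducible. For the complementary and discrete series, I would use the Kirillov model: there is no $N$-fixed vector, and the $N$-spectrum is a single $\FF^\times$-orbit (for $\PGL(2,\RR)$ this uses the fact that $D_k^+\oplus D_k^-$ has both signs). Hence the restriction is the unique infinite-dimensional representation of $P$.

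I expect the main obstacle to be bookkeeping rather than any new idea: matching the parameters $(m,\nu)$ and the ranges $(-\frac{\delta}{2},\frac{\delta}{2})$ across the different conventions.
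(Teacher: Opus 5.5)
Your proposal is correct and matches what the paper does: it offers no argument beyond deferring to the classical $\mathrm{SL}(2,\RR)$/$\mathrm{SL}(2,\CC)$ classification in \cite[Chapter 2, \S4]{Kna86}. Your parameter matching ($m'=2m$, same $\nu$, so the complementary range becomes $(-\frac{\delta}{2},\frac{\delta}{2})$) and the Clifford--Mackey analysis of $\PSL(2,\RR)\subset\PGL(2,\RR)$ are the routine transfer the paper leaves implicit.

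The one loose step is the last one. ``No $N$-fixed vector and a single $\FF^\times$-orbit'' only shows that the restriction to the parabolic is a multiple of its unique infinite-dimensional irreducible representation. Multiplicity one needs a scalar-valued model on $L^2(\FF^\times)$. For the complementary and discrete series this is the Fourier picture $L^2(\FF^\times,|\zeta|^{-2\nu}\,d\zeta)$ that the paper writes down right after the Fact, where the unipotent radical of $P'$ acts by characters and the diagonal torus by dilations.
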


To exhibit these representations in the theta correspondence, we use explicit realizations. By restriction to the opposite unipotent radical, the representation $\sigma_{m,\nu}$ can be realized on a space of smooth functions on $\FF$ with the action
$$ \sigma_{m,\nu}\begin{pmatrix}a&b\\c&d\end{pmatrix}^{-1}f(z) = \left|\frac{(a+bz)^2}{ad-bc}\right|^{-\nu-\frac{\delta}{2}}_{-m}f\left(\frac{c+dz}{a+bz}\right), $$
where $\delta=\dim_\RR\FF$, i.e. $\delta=1$ for $\FF=\RR$ and $\delta=2$ for $\FF=\CC$. In particular, we find
$$ d\sigma_{m,\nu}\begin{pmatrix}0&u\\0&0\end{pmatrix} = \partial_{uz^2}+(2\nu+\delta)\Re(uz)+2mi\Im(uz). $$
%For $\FF=\RR$, the lowest $K$-type is the character $|\det|^m$ and spanned by the function
%$$ x\mapsto(1+x^2)^{-\nu-\frac{1}{2}}. $$
In this picture, the Hilbert space for the unitary principal series is simply $L^2(\FF)$ with respect to the Lebesgue measure $dz$ on $\FF$. For the invariant inner product on the complementary series and the discrete series, we need the standard intertwining operator $A_{m,\nu}:\sigma_{m,\nu}\to\sigma_{-m,-\nu}$ which is in this picture given by
$$ A_{m,\nu}f(w) = \int_\FF |z|^{2\nu-\delta}_{2m}f(w+z)\,dz. $$
The invariant inner product on complementary series and discrete series is induced by
$$ (f_1,f_2)\mapsto\int_\FF (A_{m,\nu}f_1)(z)\overline{f_2(z)}\,dz. $$

We denote by $\calF:\calS'(\FF)\to\calS'(\FF)$ the Euclidean Fourier transform given by
$$ \calF u(\zeta) = \int_\FF e^{-i\Re(z\zeta)}f(z)\,dz, $$
where we write $\zeta=\xi+i\eta\neq0$. Then $\widehat{\sigma}_{m,\nu}(g)=\calF\circ\sigma_{m,\nu}(g)\circ\calF^{-1}$ defines an equivalent representation on a subspace of $\calS'(\FF)$. In case $\widehat{\sigma}_{m,\nu}$ has a finite-dimensional subrepresentation, this subrepresentation consists of distributions supported at $\{0\}$ (i.e. the Fourier transforms of polynomials). If we replace $\widehat{\sigma}_{m,\nu}$ by its quotient by this finite-dimensional subrepresentation, we can consider the representation space to be a subspace of $\calS'(\FF^\times)$. The standard intertwining operator $\widehat{A}_{m,\nu}=\calF\circ A_{m,\nu}\circ\calF^{-1}:\widehat{\sigma}_{m,\nu}\to\widehat{\sigma}_{-m,-\nu}$ is up to scalar multiples given by
$$ \widehat{A}_{m,\nu}f(\zeta) = |\zeta|_{-2m}^{-2\nu}f(\zeta), $$
so the Hilbert spaces for complementary series representations $\widehat{\sigma}_{m,\nu}$ ($\nu\in(-\frac{\delta}{2},\frac{\delta}{2})$) and discrete series representations $\widehat{\sigma}_{m,\nu}$ ($\nu=\frac{1-k}{2}$, $k\geq2$ even) are given by $L^2(\FF^\times,|\zeta|^{-2\nu}\,d\zeta)$.

It is easy to see the action of the parabolic subgroup
$$ P' = \left\{\begin{pmatrix}a&0\\c&d\end{pmatrix}:ad\neq0\right\}\subseteq\PGL(2,\FF) $$
in this picture:
\begin{equation*}
    \widehat{\sigma}_{m,\nu}\begin{pmatrix}1&0\\v&1\end{pmatrix}f(\zeta) = e^{-i\Re(v\zeta)}f(\zeta), \qquad\qquad
    \widehat{\sigma}_{m,\nu}\begin{pmatrix}a&0\\0&d\end{pmatrix}f(\zeta) = \left|\frac{a}{d}\right|_m^{\nu-\frac{\delta}{2}}f\left(\frac{d}{a}\zeta\right).
\end{equation*}
To compute the action of the unipotent radical we note that
\begin{align*}
    \calF\circ\frac{d}{dx} &= i\xi\circ\calF, & \calF\circ x &= i\frac{d}{d\xi}\circ\calF,\\
    \calF\circ\frac{d}{dy} &= -i\eta\circ\calF, & \calF\circ y &= -i\frac{d}{d\eta}\circ\calF.
\end{align*}
Then a short computations shows that
\begin{align*}
    i\,d\widehat{\sigma}_{m,\nu}\begin{pmatrix}0&1\\0&0\end{pmatrix} &= \xi(\partial_\xi^2-\partial_\eta^2)+2\eta\partial_\xi\partial_\eta-(2\nu-\delta)\partial_\xi+2mi\partial_\eta,\\
    i\,d\widehat{\sigma}_{m,\nu}\begin{pmatrix}0&i\\0&0\end{pmatrix} &= 2\xi\partial_\xi\partial_\eta-\eta(\partial_\xi^2-\partial_\eta^2)-(2\nu-\delta)\partial_\eta-2mi\partial_\xi,
\end{align*}
where we interpret $\eta=0$ and $\partial_\eta=0$ in case $\FF=\RR$.

%For $\FF=\RR$, the function spanning the lowest $K$-type becomes the Bessel function
%$$ \xi\mapsto|\xi|^\nu K_\nu(|\xi|). $$

With the structure constants $r$ and $d$ of the Jordan algebra $V$ in mind, we conjugate the representation $\widehat{\sigma}_{m,\nu}$ by the linear isomorphism
$$ \Phi_{m,\nu}:L^2(\FF^\times,|\zeta|^{-2\nu}\,d\zeta) \to L^2(\FF^\times,|\zeta|^{\delta\frac{rd}{2}-\delta}\,d\zeta), \quad \Phi_{m,\nu} f(\zeta) = |\zeta|^{-(\nu+\delta\frac{rd}{4}-\frac{\delta}{2})}_{-m}f(\zeta) $$
to obtain $\tau_{m,\nu}(g)=\Phi_{m,\nu}\circ\widehat{\sigma}_{m,\nu}(g)\circ\Phi_{m,\nu}^{-1}$. The parabolic subgroup $P'\subseteq\PGL(2,\RR)$ acts by
\begin{equation}
    \tau_{m,\nu}\begin{pmatrix}1&0\\v&1\end{pmatrix}f(\zeta) = e^{-i\Re(v\zeta)}f(\zeta), \qquad\qquad
    \tau_{m,\nu}\begin{pmatrix}a&0\\0&d\end{pmatrix}f(\zeta) = \left|\frac{a}{d}\right|^{-\delta\frac{rd}{4}}f\left(\frac{d}{a}\zeta\right).\label{eq:GroupActionTau}
\end{equation}
Moreover, using
\begin{align*}
    \Phi_{m,\nu}\circ\xi &= \xi\circ\Phi_{m,\nu}, & \Phi_{m,\nu}\circ\partial_\xi &= \left(\partial_\xi+\frac{(\nu+\delta\tfrac{rd}{4}-\tfrac{\delta}{2})\xi-im\eta}{\xi^2+\eta^2}\right)\circ\Phi_{m,\nu},\\
    \Phi_{m,\nu}\circ\eta &= \eta\circ\Phi_{m,\nu}, & \Phi_{m,\nu}\circ\partial_\eta &= \left(\partial_\eta+\frac{(\nu+\delta\tfrac{rd}{4}-\tfrac{\delta}{2})\eta+im\xi}{\xi^2+\eta^2}\right)\circ\Phi_{m,\nu}.
\end{align*}
we obtain for $\FF=\RR$: 
\begin{equation}
    i\,d\tau_{m,\nu}\begin{pmatrix}0&1\\0&0\end{pmatrix} = \xi\partial_\xi^2+\frac{rd}{2}\partial_\xi+\frac{\left(\left(\frac{rd-2}{4}\right)^2-\nu^2\right)}{\xi},\label{eq:SL2Bessel}
\end{equation}
and for $\FF=\CC$:
\begin{align}
    i\,d\tau_{m,\nu}\begin{pmatrix}0&1\\0&0\end{pmatrix} &= \xi(\partial_\xi^2-\partial_\eta^2)+2\eta\partial_\xi\partial_\eta+rd\partial_\xi+\frac{\left(\left(\frac{rd-2}{2}\right)^2-\nu^2-m^2\right)\xi+2im\nu\eta}{\xi^2+\eta^2},\label{eq:SL2Bessel1}\\
    i\,d\tau_{m,\nu}\begin{pmatrix}0&i\\0&0\end{pmatrix} &= 2\xi\partial_\xi\partial_\eta-\eta(\partial_\xi^2-\partial_\eta^2)+rd\partial_\eta+\frac{\left(\left(\frac{rd-2}{2}\right)^2-\nu^2-m^2\right)\eta-2im\nu\xi}{\xi^2+\eta^2}.\label{eq:SL2Bessel2}
\end{align}

\begin{theorem}\label{thm:UnitaryDualPGL}
    The infinite-dimensional irreducible unitary representations of $\PGL(2,\FF)$ are:
    \begin{itemize}
        \item The unitary principal series $\tau_{m,\nu}$ with $\nu\in i\RR$ and $m\in\ZZ/2\ZZ$ for $\FF=\RR$ and $m\in\ZZ$ for $\FF=\CC$. The only equivalences among them are $\tau_{m,\nu}\simeq\tau_{-m,-\nu}$.
        \item The complementary series $\tau_{m,\nu}$ with $\nu\in(-\frac{\delta}{2},\frac{\delta}{2})\setminus\{0\}$ and $m\in\ZZ/2\ZZ$ for $\FF=\RR$ and $m=0$ for $\FF=\CC$.
        \item For $\FF=\RR$ the discrete series $\tau_k^{\textup{ds}}$ of parameter $k\geq2$ even which equals $\tau_{m,\nu}$ with $\nu=\frac{1-k}{2}$ and arbitrary $m\in\ZZ/2\ZZ$.
    \end{itemize}
    All these representations are unitary on $L^2(\FF^\times,|\zeta|^{\delta(\frac{rd}{2}-1)}\,d\zeta)$ and irreducible when restricted to the parabolic subgroup $P'$.
\end{theorem}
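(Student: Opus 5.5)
The plan is to deduce Theorem~\ref{thm:UnitaryDualPGL} from the preceding Fact by transporting everything through the unitary equivalences already set up. The list of representations and their equivalences is exactly the one in the Fact, and $\tau_{m,\nu}$ is by definition conjugate to $\sigma_{m,\nu}$ via $\calF$ followed by $\Phi_{m,\nu}$. The only things to verify are:
\begin{enumerate}[(a)]
    \item the composite intertwiner maps the respective invariant Hilbert space onto $L^2(\FF^\times,|\zeta|^{\delta(\frac{rd}{2}-1)}\,d\zeta)$ isometrically (up to a constant);
    \item irreducibility under $P'$.
\end{enumerate}

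For (a) I distinguish the three families. For the unitary principal series ($\nu\in i\RR$), Plancherel shows that $\calF$ maps $L^2(\FF,dz)$ isometrically (up to a constant) onto $L^2(\FF^\times,d\zeta)$, since $\{0\}$ is a null set; this agrees with $L^2(\FF^\times,|\zeta|^{-2\nu}d\zeta)$ because $|\zeta|^{-2\nu}$ has modulus one. For the complementary and discrete series, the invariant form $(A_{m,\nu}f_1,f_2)_{L^2}$ becomes, after Fourier transform and Plancherel, $\int|\zeta|^{-2\nu}_{-2m}\widehat f_1\overline{\widehat f_2}\,d\zeta$, using the formula $\widehat A_{m,\nu}=|\zeta|^{-2\nu}_{-2m}$. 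The unitary character factor $(\zeta/|\zeta|)^{-2m}$ must be positive for the form to be positive; this holds for $m=0$ when $\FF=\CC$, and for $\FF=\RR$ with $2m$ even it is automatic. In the discrete series case one first passes to the quotient by the finite-dimensional subrepresentation, i.e.\ removes the distributions supported at $0$. This is consistent with the kernel of $A_{m,\nu}$ described in the Fact, and I expect this identification to be the main point requiring care: one must check that the Fourier side of the quotient is dense in $L^2(\FF^\times,|\zeta|^{-2\nu}d\zeta)$. For that I would use that Fourier transforms of the relevant functions are smooth away from $0$ and that $C_c^\infty(\FF^\times)$ lies in the image. Finally, $\Phi_{m,\nu}$ is multiplication by a function of modulus $|\zeta|^{-(\nu+\delta\frac{rd}{4}-\frac\delta2)}$, with $\Re\nu$ replacing $\nu$ in the unitary case. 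A direct change of weights shows that it is unitary from $L^2(|\zeta|^{-2\Re\nu}d\zeta)$ onto $L^2(|\zeta|^{\delta\frac{rd}{2}-\delta}d\zeta)$, since $-2\Re\nu+2\Re\nu+\delta\frac{rd}{2}-\delta=\delta(\frac{rd}{2}-1)$.

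For (b), irreducibility under $P'$ is already part of the Fact and transfers through any unitary equivalence. Alternatively, it can be seen directly from \eqref{eq:GroupActionTau}. The characters $e^{-i\Re(v\zeta)}$ generate, by Stone's theorem, the full algebra of multiplication operators by $L^\infty(\FF^\times)$, so any closed $P'$-invariant subspace has the form $L^2(E)$ for a measurable $E\subseteq\FF^\times$. Invariance under dilations $\zeta\mapsto\frac da\zeta$ by all of $\FF^\times$ forces $E$ to be null or conull, since the dilation action is transitive. Note that for $\FF=\RR$ this uses $\PGL$ rather than $\PSL$, so that negative dilations are available.

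Finally, the discrete series statement "for arbitrary $m$" follows from the Fact together with the observation that $\tau_{m,\nu}$ on $P'$ is independent of $m$ by \eqref{eq:GroupActionTau}.
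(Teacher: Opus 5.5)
Your proposal is correct and follows the paper's route. The paper gives no separate proof: the theorem is just the Fact transported through $\calF$ and $\Phi_{m,\nu}$ as set up in the preceding discussion, and your density check for the discrete series and your direct Mackey-type argument for irreducibility under $P'$ only make explicit what the paper leaves implicit. In your final step, note that it is the Fact that supplies the equivalence for different $m$, since agreement on $P'$ alone proves nothing (cf.\ Remark~\ref{rem:AbstractTheta}); irreducibility on $P'$ together with Schur's lemma then upgrades that equivalence to literal equality.
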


\begin{remark}
    For $\FF=\RR$ the restriction of the discrete series $\tau_k^{\textup{ds}}$ ($k\geq2$ even) to $\PSL(2,\RR)$ decomposes into the direct sum of two irreducible unitary representations, the holomorphic discrete series $\tau_k^{\textup{hds}}$ on $L^2(\RR_+,|\xi|^{\frac{rd}{2}-1}\,d\xi)$ and the antiholomorphic discrete series $\tau_k^{\textup{ahds}}$ on $L^2(\RR_-,|\xi|^{\frac{rd}{2}-1}\,d\xi)$. Moreover, the universal cover $\widetilde{\PSL}(2,\RR)$ allows holomorphic and antiholomorphic discrete series $\tau_k^{\textup{hds}}$ and $\tau_k^{\textup{ahds}}$ for any real number $k>1$, realized on the same Hilbert spaces by the same formulas for the action of $P'$ and the Lie algebra $\sl(2,\RR)$ (see e.g. \cite{Kos00}).
\end{remark}

\subsection{Intertwining operators}

We define a map
$$ T:C^\infty(\calO_G)\otimes C^\infty(\FF^\times)\to C^\infty(\FF^\times\calO_G), \quad T(f_1\otimes f_2)(tg\cdot c)=f_1(g\cdot c)f_2(t), $$
where we use $\RR_+$ instead of $\RR^\times$ if $V$ is Euclidean. By Lemma~\ref{lem:MeasureInPolarCoordinates}, it gives rise to an isometric isomorphism
\begin{equation}\label{eq:IntertwinerIso}
    T:L^2(\calO_G)\otimeshat L^2(\FF^\times,|\zeta|^{\delta\frac{rd}{2}}\,d^\times\zeta)\to L^2(\calO),
\end{equation}
again with $\RR^\times$ replaced by $\RR_+$ for $V$ Euclidean.

The map $T$ clearly intertwines the left-regular representation $\ell$ of $G$ on $C^\infty(\calO_G)$ with the restriction $\Pi_\min|_G$ by \eqref{eq:ActionL} and \eqref{eq:DetTrivialOnAut}. It is also easy to show that it intertwines the action of the parabolic subgroup $P'\subseteq G'$:

\begin{lemma}\label{lem:IntertwinerQandH}
    The map $T$ intertwines the representations $\ell\boxtimes\tau_{m,\nu}|_{P'}$ and $\Pi_\min|_{G\times P'}$ of $G\times P'$ for any $m\in\ZZ$, $\nu\in\CC$.
\end{lemma}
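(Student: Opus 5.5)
The plan is to check the intertwining relation separately on the three factors of $G\times P'$: on $G$, on the unipotent subgroup $\{\begin{pmatrix}1&0\\v&1\end{pmatrix}\}$, and on the diagonal torus $\{\begin{pmatrix}a&0\\0&d\end{pmatrix}\}$. Since $P'$ is generated by the latter two subgroups and $T$ is linear, these three checks suffice. For $G$ the relation has already been noted: by \eqref{eq:ActionL} and \eqref{eq:DetTrivialOnAut}, $\Pi_\min(h)F(x)=F(h^{-1}x)$ for $h\in G$. Since $h$ commutes with dilations, $T(f_1\otimes f_2)(h^{-1}tg\cdot c)=f_1(h^{-1}g\cdot c)f_2(t)$, which is $T(\ell(h)f_1\otimes f_2)$. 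The element of $G\times P'$ acting on the second factor only, the $\tau_{m,\nu}$-part, must match the action of $P'\subseteq G'\subseteq\calG$ on $L^2(\calO)$.

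\textbf{Step 1: identify $P'$ inside $\calG$.} The fractional linear transformation attached to $\begin{pmatrix}1&0\\v&1\end{pmatrix}$ (with the convention of the paper, possibly up to an inverse) is $z\mapsto z(vz+1)^{-1}$. This is $j\circ\overline{n}_{-v e}\circ j$ or, depending on conventions, the translation $\overline{n}_{ve}=\exp(0,0,ve)$. Rather than tracking conventions at group level, I would compare Lie algebras. The lower-triangular unipotent corresponds to $F=(0,0,e)$ (times $v$), and the torus corresponds to $H=(0,2L(e),0)$, or for $\FF=\CC$ also $(0,2iL(e),0)$.
\begin{itemize}
\item Using the formula $d\Pi_\min(0,0,u)=-i\tau(x,u)$, the operator $d\Pi_\min(0,0,ve)$ is multiplication by $-i\tau(x,ve)=-i\Re(v\,\tau_\FF(x,e))$. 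At $x=tg\cdot c$ we have $\tau_\FF(x,e)=t\,\tau_\FF(g c,e)=t$, because $\tau_\FF(\cdot,e)$ is $G$-invariant and equals $1$ at $c$.
\item Hence $\Pi_\min(\overline{n}_{ve})$ acts on $T(f_1\otimes f_2)$ by multiplying $f_2(t)$ by $e^{-i\Re(vt)}$. This matches the formula \eqref{eq:GroupActionTau} for $\tau_{m,\nu}\begin{pmatrix}1&0\\v&1\end{pmatrix}$.
\end{itemize}

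\textbf{Step 2: the torus.} The diagonal element $\begin{pmatrix}a&0\\0&d\end{pmatrix}$ acts on $V$ as the dilation $z\mapsto (a/d)z$, i.e.\ as $l=(a/d)\id_V\in\Str(V)$. Here $-\id_V$ is handled by the extension lemma when $V$ is real non-Euclidean, and for $V$ Euclidean $P'\subseteq\PSL(2,\RR)$ gives $a/d>0$. By \eqref{eq:ActionL},
\[ \Pi_\min(l)F(x)=|\mathrm{det}_\RR(l)|^{-\frac{rd}{4n}}F((d/a)x), \qquad |\mathrm{det}_\RR(l)|=|a/d|^{\delta n}, \]
so the prefactor is $|a/d|^{-\delta\frac{rd}{4}}$. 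The factor $\tfrac{n}{\dim_\FF}$ should be checked: $\det_\RR$ of the $\FF$-scalar $s$ on $V\cong\FF^n$ is $|s|^{\delta n}$. Applying this to $F=T(f_1\otimes f_2)$ at $x=tg\cdot c$ gives $f_1(gc)f_2((d/a)t)$, which matches \eqref{eq:GroupActionTau}. For the nonidentity component, $\Pi_\min(-\id_V)F(x)=F(-x)$ agrees with $\tau_{m,\nu}(\mathrm{diag}(-1,1))f(\zeta)=f(-\zeta)$, consistently with the formula. For covers, one uses that $\Pi_\min$ on $\Str(V)_0\overline{N}$ splits.

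\textbf{Main obstacle.} The calculations are routine. The only delicate point is the bookkeeping of conventions: whether $\begin{pmatrix}1&0\\v&1\end{pmatrix}$ corresponds to $\overline{n}_{ve}$ or $\overline{n}_{-ve}$, and whether the diagonal element acts as the dilation by $a/d$ or by $d/a$. To settle this, I would check the two commutation relations on the level of the Lie algebras $\frakg'$ and $\sl(2,\FF)$, and observe that both sides are independent of $m$ and $\nu$ on $P'$.
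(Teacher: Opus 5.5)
Your proposal is correct and follows the paper's proof essentially step for step. The $G$-part comes from \eqref{eq:ActionL} and \eqref{eq:DetTrivialOnAut}; the lower unipotent subgroup $\exp(\FF F)$ acts by multiplication via \eqref{eq:ActionN} with $\tau(tg\cdot c,ve)=\Re(vt)$; the dilations carry the factor $|\mathrm{det}_\RR((a/d)\id_V)|^{-\frac{rd}{4n}}=|a/d|^{-\delta\frac{rd}{4}}$; and the non-identity component is handled by $\Pi_\min(-\id_V)$, with your explicit $|a/d|$ computation also covering the complex dilations cleanly.
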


\begin{proof}
    We first verify the intertwining property for $\exp(\FF F)$:
    \begin{align*}
        \Pi_\min(\exp(sF))\circ T(f_1\otimes f_2)(tg\cdot c) &= e^{-i\tau(tg\cdot c,se)}T(f_1\otimes f_2)(tg\cdot c) = f_1(g\cdot c)\cdot e^{-i\Re(st)}f_2(t)\\
        &= T(f_1\otimes\tau_{m,\nu}(\exp(sF))f_2)(tg\cdot c).
    \end{align*}
    Now we consider the action of $\exp(\FF H)$:
    \begin{align*}
        \Pi_\min(\exp(sH))\circ T(f_1\otimes f_2)(tg\cdot c) &= e^{-\delta\frac{rd}{2}s}T(f_1\otimes f_2)(e^{-2s}tg\cdot c) = e^{-\delta\frac{rd}{2}s}f_1(g\cdot c)f_2(e^{-2s}t)\\
        &= f_1(g\cdot c)\cdot\tau_{m,\nu}(\exp(sH))f_2(t)\\
        &= T(f_1\otimes\tau_{m,\nu}(\exp(sH))f_2)(tg\cdot c).
    \end{align*}
    In the case where $\FF=\RR$ and $V$ is non-Euclidean, we also need to check the intertwining property for an element of the non-identity component of $P'\subseteq G'\simeq\PGL(2,\RR)$. We do this for $-\id_V\in G'$:
    \begin{align*}
        \Pi_\min(-\id_V)\circ T(f_1\otimes f_2)(tg\cdot c) &= T(f_1\otimes f_2)(-tg\cdot c) = f_1(g\cdot c)f_2(-t)\\
        &= f_1(g\cdot c)\cdot \tau_{m,\nu}(-\id_V)f_2(t) = T(f_1\otimes \tau_{m,\nu}(-\id_V)f_2)(tg\cdot c).\qedhere
    \end{align*}
\end{proof}

It remains to study the action of $\exp(\FF E)$. This will be done via the Lie algebra action of $\FF E$. To formulate the statement we let $C\in U(\frakg)$ denote the Casimir element of $\frakg$ with respect to the Killing form $B$ normalized by
\begin{equation}
    B([L(c_i),L(v)],[L(c_i),L(w)]) = \tau(v,w) \qquad (v,w\in V_{ij}).\label{eq:NormalizationKillingForm}
\end{equation}
More precisely, if $(X_\alpha)$ is a basis of the (real) Lie algebra $\frakg$ and $(X_\alpha')$ its dual basis with respect to $B$, then
$$ C=\sum_\alpha X_\alpha X_\alpha'. $$

In the case $\FF=\CC$ we have $\frakg_\CC\simeq\frakg\times\frakg$ by the two complex-linear embeddings
$$ \frakg\hookrightarrow\frakg_\CC=\{X+IY:X,Y\in\frakg\}, \quad X\mapsto\frac{1}{2}(X\pm IiX), $$
so there are two linearly independent invariants of order two in $U(\frakg)$, $C$ and
$$ D = \sum_\alpha X_\alpha\cdot(iX_\alpha') = \sum_\alpha (iX_\alpha)\cdot X_\alpha', $$
where again $(X_\alpha)$ is a basis of the real Lie algebra $\frakg$ and $(X_\alpha')$ its dual basis with respect to $B$.

\begin{lemma}[Key Lemma]\label{lem:KeyLemma}
    \begin{enumerate}[(1)]
        \item For $\FF=\RR$:
        $$ i\,d\Pi_\min(e,0,0)\circ T(f_1\otimes f_2) = T(f_1\otimes(\xi f_2''+\tfrac{rd}{2}f_2')) + 8T(d\ell(C)f_1\otimes \xi^{-1}f_2). $$
        \item For $\FF=\CC$:
        \begin{align*}
            i\,d\Pi_\min(e,0,0)\circ T(f_1\otimes f_2) ={}& T\Big(f_1\otimes\big[\xi(\partial_\xi^2-\partial_\eta^2)f_2+2\eta\partial_\xi\partial_\eta f_2+rd\partial_\xi f_2\big]\Big)\\
            & \qquad+ 8T\Big(d\ell(C)f_1\otimes\tfrac{\xi}{\xi^2+\eta^2}f_2-d\ell(D)f_1\otimes\tfrac{\eta}{\xi^2+\eta^2}f_2\Big),\\
            i\,d\Pi_\min(ie,0,0)\circ T(f_1\otimes f_2) ={}& T\Big(f_1\otimes\big[2\xi\partial_\xi\partial_\eta f_2-\eta(\partial_\xi^2-\partial_\eta^2)f_2+rd\partial_\eta f_2\big]\Big)\\
            & \qquad+ 8T\Big(d\ell(C)f_1\otimes\tfrac{\eta}{\xi^2+\eta^2}f_2+d\ell(D)f_1\otimes\tfrac{\xi}{\xi^2+\eta^2}f_2\Big).
        \end{align*}
    \end{enumerate}
\end{lemma}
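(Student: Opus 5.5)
The plan is to compute $d\Pi_\min(e,0,0)=-i\tau(\calB,e)$ on functions of the form $T(f_1\otimes f_2)$. Since the Bessel operator is a second-order differential operator on $\calO$, it suffices to split it into a radial part and a part tangential to the $G$-orbits $t\calO_G$.

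By $G$-equivariance of $\calB$ (Bessel operators are $\Str(V)$-covariant and $G$ fixes $e$) and by homogeneity under $\FF^\times$, it is enough to evaluate at a single point $x=tc$ with $t\in\FF^\times$. Near $tc$ we use local coordinates adapted to the fibration $\FF^\times\calO_G$:
\[
    (s,X)\mapsto s\exp(X)\cdot c, \qquad s\in\FF^\times,\ X\in\frakg^{-\sigma}=[L(c),L(V(c,\tfrac12))].
\]
This is a local diffeomorphism onto a neighbourhood of $tc$ in $\calO$ by Lemma~\ref{lem:OHisSymmetricSpace} and Lemma~\ref{lem:MaxTorus}. Tangent vectors at $tc$ are then $\FF c$ (radial directions) together with $\{Xc: X\in\frakg^{-\sigma}\}=\{-\tfrac14 y : y\in V(c,\tfrac12)\}$, since $[L(c),L(y)]c=-\tfrac14 y$ for $y\in V(c,\tfrac12)$.

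Next we compute $\tau(\calB,e)$ at $tc$. Using $\tau(P(\widehat e_\alpha,\widehat e_\beta)x,e)=\tau(x,P(\widehat e_\alpha,\widehat e_\beta)e)$ and $P(u,v)e=uv$, we get
\[
    \tau(\calB F,e)(x)=\sum_{\alpha,\beta}\tau(x,\widehat e_\alpha\widehat e_\beta)\,\partial_\alpha\partial_\beta F+\tfrac{\delta d}{2}\sum_\alpha\tau(\widehat e_\alpha,e)\,\partial_\alpha F .
\]
At $x=tc$ the bilinear form $(u,v)\mapsto\tau(tc,uv)=\tau(t\,cu,v)$ is diagonal with respect to the Peirce decomposition $V(c,1)\oplus V(c,\tfrac12)\oplus V(c,0)$. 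It vanishes on $V(c,0)$, equals $t\tau$ on $V(c,1)$, and equals $\tfrac t2\tau$ on $V(c,\tfrac12)$. So the second-order part splits into two pieces:
\begin{itemize}
    \item a radial piece in the direction $c$, which yields $\xi\partial_\xi^2$ (or its complex analogue $\xi(\partial_\xi^2-\partial_\eta^2)+2\eta\partial_\xi\partial_\eta$, coming from $\Re$ of $t\,\partial_t^2$ in holomorphic form);
    \item a piece $\tfrac t2\sum\tau$-dual second derivatives along $V(c,\tfrac12)$.
\end{itemize}
Because $\calO$ is curved, the ambient second derivatives along $V(c,\tfrac12)$ differ from derivatives along the orbit by first-order terms coming from the second-order Taylor term of $\exp(X)c$. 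That is,
\[
    \tfrac12X^2c=\tfrac1{32}\,[L(c),L(y)]y\in\FF c\oplus V(c,0),
\]
and these corrections contribute extra radial first-order terms. Together with the first-order term $\tfrac{\delta d}{2}\tau(\widehat e_\alpha,e)\partial_\alpha$, they must combine to $\tfrac{rd}{2}\partial_\xi$ (respectively $rd\,\partial_\xi$). We would check this via the count $\dim V(c,\tfrac12)=(r-1)d$ and the identity $\tfrac12+\tfrac{(r-1)d}{2}\cdot\ldots$, consistent with the homogeneity $|t|^{\delta rd/2}$ from Lemma~\ref{lem:MeasureInPolarCoordinates}. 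As a cross-check, the radial part must be symmetric with respect to this measure.

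The tangential piece $\tfrac t2\sum_{\alpha}\partial_{y_\alpha}\partial_{\widehat y_\alpha}$ (with $y_\alpha$ running over a basis of $V(c,\tfrac12)$) is then rewritten in terms of $\frakg$. Since $y\mapsto-\tfrac14 y$ identifies $V(c,\tfrac12)$ with $\frakg^{-\sigma}\cdot c$, and the normalization \eqref{eq:NormalizationKillingForm} makes the dual basis for $B$ correspond to the $\tau$-dual basis, the factor $\tfrac12\cdot 16=8$ and the division by $t$ (from the scaling of the orbit $t\calO_G$) give $\tfrac{8}{t}\sum X_\alpha X'_\alpha$ over $\frakg^{-\sigma}$. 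For right-$G_c$-invariant functions (functions on $G/G_c$), the $\frakg^\sigma$ part of the Casimir acts trivially at the base point. Hence this sum equals $d\ell(C)$ evaluated at $c$, and by equivariance it equals $d\ell(C)$ at every point of $\calO_G$, up to the first-order terms already absorbed above.

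For $\FF=\CC$, the form $\tau=\Re\tau_\FF$ and the complex factor $t=\xi+i\eta$ mix $C$ and $D=\sum X_\alpha(iX_\alpha')$. Writing $\Re(t^{-1}\cdot\,)$ yields the coefficients $\tfrac{\xi}{|t|^2}$ and $\tfrac{\eta}{|t|^2}$, and the $ie$ formula follows by replacing $e$ with $ie$, which multiplies by $i$ inside $\tau_\FF$.

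The main obstacle is bookkeeping the first-order correction terms from the curvature of $\calO$, so that they exactly produce the coefficient $\tfrac{rd}{2}$ (respectively $rd$) and no stray tangential first-order terms. To control this, I would first verify the identity on functions with $f_1$ constant, where $C$ acts by zero. In that case, equivariance together with the known $\sl_2$-action forces the radial operator to be the one in \eqref{eq:SL2Bessel}. The Casimir term is then isolated by subtraction.
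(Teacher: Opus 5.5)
Your strategy is the paper's. You reduce by equivariance to the ray $\FF^\times c$ and use that $(u,v)\mapsto\tau(x,uv)$ is block-diagonal for the Peirce decomposition of $c$. You replace $d\ell(C)$ at the base point by its $\frakg^{-\sigma}$-part $C'=\sum_\alpha\varepsilon_\alpha[L(c),L(f_\alpha)]^2$, and you account for the curvature term $X^2c$.

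The paper reduces all the way to $x=c$ using the $G\times P'$-equivariance of Lemma~\ref{lem:IntertwinerQandH}. For $\FF=\CC$ this reduction is what produces the coefficients $\xi/(\xi^2+\eta^2)$ and $\eta/(\xi^2+\eta^2)$. At $c$ one then only compares $i\,d\Pi_\min(e,0,0)$ with $C'$, and $i\,d\Pi_\min(ie,0,0)$ (whose $V(c,\frac12)$-block is $\frac12\sum_\alpha\varepsilon_\alpha\partial_{f_\alpha}\partial_{if_\alpha}$) with $D'=\sum_\alpha\varepsilon_\alpha[L(c),L(f_\alpha)][L(c),L(if_\alpha)]$. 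Your sentence about $\Re(t^{-1}\,\cdot\,)$ does not yet show where $D$ comes from; working at $t=1$ is the clean way to get it.

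The real gap is that the step you call the main obstacle is left at ``must combine'', and what you wrote there is off. Take $X_\alpha=[L(c_1),L(f_\alpha)]$ with $f_\alpha\in V_{1j}$ and $\tau(f_\alpha,f_\beta)=\varepsilon_\alpha\delta_{\alpha\beta}$. Then $X_\alpha c_1=-\frac14 f_\alpha$ and $X_\alpha^2c_1=-\frac1{16}\varepsilon_\alpha(c_1-c_j)$. So for $X=[L(c),L(y)]$ one has $\frac12X^2c=-\frac18[L(c),L(y)]y$, not $\frac1{32}[L(c),L(y)]y$. Since $d\ell(X)^2F(c_1)=\partial_{Xc_1}^2F+\partial_{X^2c_1}F$ for any local extension of $F$, you get at $c_1$
\[
  \tfrac12\sum_\alpha\varepsilon_\alpha\partial_{f_\alpha}^2=8\,d\ell(C')+\tfrac{\delta d}{2}\sum_{j=2}^r\partial_{c_1-c_j}.
\]
The first-order part of $\tau(\calB,e)$ is $\frac{\delta d}{2}\partial_e=\frac{\delta d}{2}\sum_{j=1}^r\partial_{c_j}$. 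Adding the two, the non-tangential derivatives $\partial_{c_j}$ ($j\geq2$) cancel, and $\frac{\delta d}{2}+(r-1)\frac{\delta d}{2}=\delta\frac{rd}{2}$ remains; this is what your ``$\frac12+\frac{(r-1)d}{2}\cdots$'' should read. In particular the curvature corrections are not radial by themselves: their $V(c,0)$-components cancel only against those of $\frac{\delta d}{2}\partial_e$.

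Your fallback (calibrate with $f_1\equiv1$, then subtract) is sound only once you state exactly this structure:
\begin{itemize}
\item The total first-order remainder is $\partial_w$ for one fixed vector $w\in\FF c\oplus V(c,0)$, independent of $F$, because neither $X_\alpha^2c$ nor $e$ has a $V(c,\frac12)$-component.
\item Tangency of $\calB$ and of the $d\ell(X)$ then forces $w\in\FF c$, so the remainder acts on $f_2$ only.
\item The $\mathfrak{sl}_2$-relations fix the radial operator only up to a term $c_0\xi^{-1}$; $c_0=0$ because $\calB$ has no zeroth-order term.
\end{itemize}
For $\FF=\RR$ you can also see the last point directly: $T(1\otimes f_2)(x)=f_2(\tau(x,e))$, and $\tau(\calB(f_2\circ\tau(\cdot,e)),e)=\xi f_2''+\frac{rd}{2}f_2'$.
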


\begin{proof}
    We claim that it suffices to show the identities evaluated at $x=c$. Let us show this for $\FF=\RR$, the case $\FF=\CC$ is similar. For $s\in\RR$ and $g\in G$ we can write
    \begin{equation*}
        i\,d\Pi_\min(e,0,0)\circ T(f_1\otimes f_2)(e^{2s}g\cdot c) = ie^{\frac{rd}{2}s}\Pi_\min(\exp(-tH)g^{-1})\circ d\Pi_\min(e,0,0)\circ T(f_1\otimes f_2)(c).
    \end{equation*}
    Using $\Pi_\min(x)\circ d\Pi_\min(Y)=d\Pi_\min(\Ad(x)Y)\circ\Pi_\min(x)$ for $x=\exp(-tH)g^{-1}$ and $Y=(e,0,0)$ gives
    $$ = ie^{\frac{rd}{2}s}d\Pi_\min(e^{-2t}e,0,0)\circ\Pi_\min(\exp(-tH)g^{-1})\circ T(f_1\otimes f_2)(c). $$
    We can use that $T$ is intertwining for $G\times P'$ by Lemma~\ref{lem:IntertwinerQandH} to rewrite this as
    $$ = ie^{\frac{rd}{2}s}e^{-2t}d\Pi_\min(e,0,0)\circ T\Big(\ell(g^{-1})f_1\otimes \tau_{m,\nu}(\exp(-tH))f_2\Big)(c). $$
    If we now assume the claimed formula at $x=c$ and write $\xi f_2''+\frac{rd}{2}f_2'=d\tau_{m,\nu}(E)f_2$ with $\nu=\frac{rd-2}{4}$ and arbitrary $m\in\ZZ/2\ZZ$, then
    \begin{multline*}
        = e^{\frac{rd}{2}s}e^{-2t}T\Big(\ell(g^{-1})f_1\otimes\big(d\tau_{m,\nu}(E)\circ\tau_{m,\nu}(\exp(-tH))f_2\big)\Big)(c)\\
        +8e^{\frac{rd}{2}s}e^{-2t}T\Big(\big(d\ell(C)\circ\ell(g^{-1})f_1\big)\otimes \xi^{-1}\tau_{m,\nu}(\exp(-tH))f_2\Big)(c).
    \end{multline*}
    Using once more the compatibility between group and Lie algebra representation for the first term as well as \eqref{eq:GroupActionTau} for the second term, we find
    \begin{multline*}
        = e^{\frac{rd}{2}s}T\Big(\ell(g^{-1})f_1\otimes\big(\tau_{m,\nu}(\exp(-tH))\circ d\tau_{m,\nu}(E)f_2\big)\Big)(c)\\
        +8e^{\frac{rd}{2}s}T\Big(\big(\ell(g^{-1})\circ d\ell(C)f_1\big)\otimes \tau_{m,\nu}(\exp(-tH))\xi^{-1}f_2\Big)(c)
    \end{multline*}
    Another application of Lemma~\ref{lem:IntertwinerQandH} finally yields
    \begin{multline*}
        = e^{\frac{rd}{2}s}\Pi_\min(\exp(-tH)g^{-1})\circ\Big(T\big(f_1\otimes d\tau_{m,\nu}(E)f_2\big)(c)+8T\big(d\ell(C)f_1\otimes \xi^{-1}f_2\big)(c)\Big)\\
        = T\big(f_1\otimes d\tau_{m,\nu}(E)f_2\big)(e^{2t}g\cdot c)+8T\big(d\ell(C)f_1\otimes \xi^{-1}f_2\big)(e^{2t}g\cdot c).
    \end{multline*}
    This shows that if the claimed identity holds at $x=c$, then it holds on $\RR_+\calO_G$. In the non-Euclidean case, a similar computation with $-\id_V\in G'$ (using $\tau_{m,\nu}(-\id_V)\circ d\tau_{m,\nu}(E) =-d\tau_{m,\nu}(E)\circ \tau_{m,\nu}(-\id_V)$) extends this to $\RR^\times\calO_G$, which is open and dense in $\calO$ by Proposition~\ref{prop:FOHopendense}.
    
    To show the claimed identity evaluated at $x=c$, we observe that by Lemma~\ref{lem:MaxTorus} we can write $d\ell(C)f_1(c)=d\ell(C')f_1(c)$, and $d\ell(D)f_1(c)=d\ell(D')f_1(c)$ in the case $\FF=\CC$, where
    $$ C' = \sum_\alpha\varepsilon_\alpha[L(c),L(f_\alpha)]^2 \in U(\frakg) \qquad \mbox{and} \qquad D' = \sum_\alpha\varepsilon_\alpha[L(c),L(f_\alpha)]\cdot[L(c),L(if_\alpha)] $$
    with $(f_\alpha)$ a basis of $V(c,\frac{1}{2})=V(c,\frac{1}{2})^+\oplus V(c,\frac{1}{2})^-$ such that $\tau(f_\alpha,f_\beta)=\varepsilon_\alpha\delta_{\alpha\beta}$ and $\varepsilon_\alpha\in\{\pm1\}$, i.e. $\varepsilon_\alpha=\pm1$ for $f_\alpha\in V(c,\frac{1}{2})^\pm$. We start by computing the following expressions for $x=c$:
    \begin{align*}
        i\,d\Pi_\min(e,0,0) &= \sum_{\alpha,\beta}\tau(P(\widehat{e}_\alpha,\widehat{e}_\beta)x,e)\frac{\partial^2}{\partial x_\alpha\partial x_\beta}+\frac{\delta d}{2}\sum_\alpha\tau(\widehat{e}_\alpha,e)\frac{\partial}{\partial x_\alpha},\\
        i\,d\Pi_\min(ie,0,0) &= \sum_{\alpha,\beta}\tau(P(\widehat{e}_\alpha,\widehat{e}_\beta)x,ie)\frac{\partial^2}{\partial x_\alpha\partial x_\beta}+\frac{\delta d}{2}\sum_\alpha\tau(\widehat{e}_\alpha,ie)\frac{\partial}{\partial x_\alpha}.
    \end{align*}
    In the first term we can rewrite
    $$ \tau(P(\widehat{e}_\alpha,\widehat{e}_\beta)x,e) = \tau(x,P(\widehat{e}_\alpha,\widehat{e}_\beta)e) = \tau(x,\widehat{e}_\alpha\cdot\widehat{e}_\beta) $$
    and similar for $\tau(P(\widehat{e}_\alpha,\widehat{e}_\beta)x,ie)$. We complete $c_1=c$ to a Jordan frame $c_1,c_2,\ldots,c_r$ of $V^+$ and choose the basis $(e_\alpha)$ such that every $e_\alpha$ is contained in some $V_{ij}^\pm$. Then the same is true for $\widehat{e}_\alpha$. Recall that $V_{ij}\cdot V_{k\ell}=\{0\}$ for $i,j,k,\ell$ distinct, that $V_{ij}\cdot V_{jk}\subseteq V_{ik}$ for $i\neq k$, and that $V_{ij}\cdot V_{ij}\subseteq V_{ii}+V_{jj}=\FF c_i+\FF c_j$ (see \eqref{eq:VijkMultiplication} and \eqref{eq:VijMultiplication}). Moreover, $V^\pm\cdot V^\pm\subseteq V^+$ and $V^\pm\cdot V^\mp\subseteq V^-$. Hence, for $x=c=c_1$ the expression $\tau(x,\widehat{e}_\alpha\cdot\widehat{e}_\beta)$ can only be non-zero in the following cases:
    \begin{itemize}
        \item $\widehat{e}_\alpha,\widehat{e}_\beta\in V_{11}^\pm$. In the case $\FF=\RR$ we can choose $e_\alpha=e_\beta=c_1$, then $\widehat{e}_\alpha=\widehat{e}_\beta=c_1$ and hence the corresponding term in the sum becomes $\partial_{c_1}^2$. In the case $\FF=\CC$ we additionally have a contribution for $e_\alpha=e_\beta=ic_1$ and $\widehat{e}_\alpha=\widehat{e}_\beta=-ic_1$, so the term is $-\partial_{ic_1}^2$.
        \item $\widehat{e}_\alpha,\widehat{e}_\beta\in V_{1j}^\pm$. We can choose $e_\alpha=f_\alpha$, then $\widehat{e}_\alpha=\varepsilon_\alpha f_\alpha$, so that $\tau(c_1,\widehat{e}_\alpha \cdot \widehat{e}_\beta)=\varepsilon_\alpha\varepsilon_\beta \tau(c_1\cdot f_\alpha,f_\beta)=\frac{1}{2}\varepsilon_\alpha\delta_{\alpha,\beta}$. Thus, the sum over such $\alpha$ becomes $\frac{1}{2}\sum_\alpha\varepsilon_\alpha\partial_{f_\alpha}^2$.
    \end{itemize}
    In the second term we only get contributions from $\widehat{e}_\alpha=c_j$, so in total we find
    \begin{equation}
        i\,d\Pi_\min(e,0,0) = \partial_{c_1}^2-\partial_{ic_1}^2+\frac{1}{2}\sum_\alpha\varepsilon_\alpha\partial_{f_\alpha}^2+\frac{\delta d}{2}\sum_{j=1}^r\partial_{c_j}. \label{eq:BesselAtC}
    \end{equation}
    Similarly, we obtain in the case $\FF=\CC$:
    \begin{equation}
        i\,d\Pi_\min(ie,0,0) = 2\partial_{c_1}\partial_{ic_1}+\frac{1}{2}\sum_\alpha\varepsilon_\alpha\partial_{f_\alpha}\partial_{if_\alpha}+\frac{\delta d}{2}\sum_{j=1}^r\partial_{ic_j}\label{eq:BesselAtC2}
    \end{equation}

    We now compute $d\Pi_\min(C')f(c_1)$, and also $d\Pi_\min(D')f(c_1)$ in the case $\FF=\CC$. We have $[L(c_1),L(f_\alpha)]c_1=-\frac{1}{4}f_\alpha$, so we find
    $$
    d\Pi_\min(C')f(c_1)=-\frac{1}{4}\sum_\alpha\left.\varepsilon_\alpha\partial_{f_\alpha}\left(\partial_{[L(c_1),L(f_\alpha)]x}f(x)\right)\right|_{x=c_1}.
    $$
    By the product rule, this equals
    \begin{multline*}
        -\frac{1}{4}\sum_\alpha\left.\varepsilon_\alpha\Big(\partial_{[L(c_1),L(f_\alpha)]x}\partial_{f_\alpha}f(x)+\partial_{[L(c_1),L(f_\alpha)]f_\alpha}f(x)\Big)\right|_{x=c_1}\\
        = -\frac{1}{4}\sum_\alpha\varepsilon_\alpha\Big(-\frac{1}{4}\partial_{f_\alpha}^2f(c_1)+\partial_{[L(c_1),L(f_\alpha)]f_\alpha}f(c_1)\Big).
    \end{multline*}
    Since $f_\alpha^2=\frac{1}{2}\tau(f_\alpha,f_\alpha)(c_1+c_j)=\frac{1}{2}\varepsilon_\alpha(c_1+c_j)$ we find
    $$ [L(c_1),L(f_\alpha)]f_\alpha=c_1f_\alpha^2-\frac{1}{2}f_\alpha^2=\frac{1}{4}\varepsilon_\alpha(c_1-c_j) $$
    and the action of $d\Pi_\min(C)$ at $x=c_1$ becomes
    \begin{equation}
        d\Pi_\min(C')f(c_1) = \frac{1}{16}\sum_\alpha\varepsilon_\alpha\partial_{f_\alpha}^2f(c_1)-\frac{\delta d}{16}\sum_{j=2}^r\partial_{c_1-c_j}f(c_1).\label{eq:CasimirAtC}
    \end{equation}
    Putting \eqref{eq:BesselAtC} and \eqref{eq:CasimirAtC} together we conclude that
    $$ i\,d\Pi_\min(e,0,0)f(c_1) = \partial_{c_1}^2f(c_1)-\partial_{ic_1}^2f(c_1) +\delta\frac{rd}{2}\partial_{c_1}f(c_1) + 8d\Pi_\min(C)f(c_1). $$
    We can rewrite $\partial_{c_1}T(f_1\otimes f_2)=T(f_1\otimes\partial_\xi f_2)$ and $\partial_{ic_1}T(f_1\otimes f_2)=T(f_1\otimes\partial_\eta f_2)$ as well as $d\Pi_\min(C)T(f_1\otimes f_2)=T(d\ell(C)f_1\otimes f_2)$ by Lemma~\ref{lem:IntertwinerQandH}, so this is the claimed formula at $x=c_1$. Similarly, we find that
    $$ d\Pi_\min(D')f(c_1) = \frac{1}{16}\sum_\alpha\varepsilon_\alpha\partial_{f_\alpha}\partial_{if_\alpha}f(c_1)-\frac{\delta d}{16}\sum_{j=2}^r\partial_{i(c_1-c_j)}f(c_1), $$
    and together with \eqref{eq:BesselAtC2} this gives
    \begin{equation*}
        i\,d\Pi_\min(ie,0,0)f(c_1) = 2\partial_{c_1}\partial_{ic_1}f(c_1)+\delta\frac{rd}{2}\partial_{ic_1}f(c_1)+8d\Pi_\min(D')f(c_1),
    \end{equation*}
    which is the desired formula at $x=c_1$.
\end{proof}

\begin{theorem}\label{thm:AbstractTheta}
    If
    $$ L^2(\calO_G) \simeq \int^\oplus_{\widehat{G}} \pi\,d\mu(\pi) $$
    is the decomposition of the left-regular representation $\ell$ of $G$ on $L^2(\calO_G)$, then the restriction of $\Pi_\min$ to $G\times\widetilde{G'}$ is given by
    $$ \Pi_\min|_{G\times\widetilde{G'}} \simeq \int^\oplus_{\widehat{G}} \pi\boxtimes\theta(\pi)\,d\mu(\pi), $$
    where $\theta(\pi)$ is an irreducible unitary representation of $\widetilde{G'}$, a finite cover of $G'$, on $L^2(\RR_+,\xi^{\frac{rd}{2}}\,d^\times\xi)$ (for $V$ Euclidean) resp. $L^2(\FF^\times,|\zeta|^{\delta\frac{rd}{2}}\,d^\times\zeta)$ (for $V$ non-Euclidean or complex) that agrees with $\tau_{m,\nu}$ on $P'$ and whose derived representation of $\sl(2,\FF)$ agrees on the subspace $C_c^\infty(\RR_+)$ resp. $C_c^\infty(\FF^\times)$ with $d\tau_{m,\nu}$, where $m$ and $\nu$ are related to the eigenvalues $d\pi(C)$ and, in case $\FF=\CC$, also $d\pi(D)$ in the following way:
    \begin{itemize}
        \item If $\FF=\RR$ then $d\pi(C)=-\frac{1}{32}(4\nu^2-(\frac{rd}{2}-1)^2)$.
        \item If $\FF=\CC$ then $d\pi(C)=-\frac{1}{8}(\nu^2+m^2-(\frac{rd}{2}-1)^2)$ and $d\pi(D)=-\frac{1}{4}im\nu$.
    \end{itemize}
\end{theorem}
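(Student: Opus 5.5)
Via the isometric isomorphism \eqref{eq:IntertwinerIso} we identify $L^2(\calO)$ with $L^2(\calO_G)\otimeshat L^2(\FF^\times,|\zeta|^{\delta\frac{rd}{2}}\,d^\times\zeta)$, and we insert the direct integral decomposition of $L^2(\calO_G)$. This gives
$$ L^2(\calO)\simeq\int^\oplus_{\widehat{G}}\calH_\pi\otimeshat L^2(\FF^\times,|\zeta|^{\delta\frac{rd}{2}}\,d^\times\zeta)\,d\mu(\pi). $$
Here, for $\mu$-almost every $\pi$, the representation $\pi$ is irreducible, so $d\pi(C)$ (and $d\pi(D)$ if $\FF=\CC$) acts on smooth vectors by a scalar. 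The action of $G$ is $\pi\otimes\id$ fibrewise, so the commutant of $G$ is decomposable. Since $G'$ commutes with $G$, every $\Pi_\min(g')$, $g'\in\widetilde{G'}$, is a decomposable operator of the form $\id\otimes\theta_\pi(g')$. More precisely, the multiplicity space of $\pi$ in $L^2(\calO)$ is $L^2(\FF^\times,\ldots)$, and $\widetilde{G'}$ acts on it by some unitary representation $\theta(\pi)$. The whole task is to identify $\theta(\pi)$ and to prove that it is irreducible.

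For $P'$ this identification is immediate from Lemma~\ref{lem:IntertwinerQandH}: $\theta(\pi)|_{P'}=\tau_{m,\nu}|_{P'}$, which does not depend on $m,\nu$. For the Lie algebra, I apply the Key Lemma~\ref{lem:KeyLemma} to $f_1$ a smooth vector of the $\pi$-component (first for $f_1$ in a Gårding-type dense subspace, then passing to the direct integral fibrewise) and to $f_2\in C_c^\infty(\FF^\times)$. Replacing $d\ell(C)$ by its scalar $c_\pi$ shows that $i\,d\theta(\pi)(E)$ acts on $C_c^\infty(\FF^\times)$ by
$$ \xi\partial_\xi^2+\tfrac{rd}{2}\partial_\xi+8c_\pi\xi^{-1}, $$
and similarly for $\FF=\CC$, with both $C$ and $D$ entering. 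Comparing with \eqref{eq:SL2Bessel}, \eqref{eq:SL2Bessel1} and \eqref{eq:SL2Bessel2}, we choose $\nu$ (and $m$) so that $8c_\pi=(\frac{rd-2}{4})^2-\nu^2$, which is the stated relation $d\pi(C)=-\frac1{32}(4\nu^2-(\frac{rd}{2}-1)^2)$. In the complex case we match
$$ 8d\pi(C)=\bigl(\tfrac{rd-2}{2}\bigr)^2-\nu^2-m^2,\qquad -8d\pi(D)=2im\nu, $$
which gives the stated formulas. Such $(m,\nu)\in\CC^2$ always exist by solving a quadratic system; integrality of $m$ is not needed at this stage.

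Irreducibility then follows from the Fact/Theorem~\ref{thm:UnitaryDualPGL} principle that a unitary representation whose restriction to $P'$ is $\tau_{m,\nu}|_{P'}$ is already irreducible. Indeed, $\tau_{m,\nu}|_{P'}$ on $L^2(\FF^\times,\ldots)$ is irreducible: it is the Mackey-type representation of $\FF^\times\ltimes\FF$, with the $\FF$-characters $e^{-i\Re(v\zeta)}$ acting transitively on $\FF^\times$ (on $\RR_+$ in the Euclidean case, where $P'$ is taken inside $\PSL(2,\RR)$). Hence $\theta(\pi)$ is irreducible for almost every $\pi$.

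The main obstacle is the measure-theoretic passage from the Key Lemma, an identity of differential operators on $C_c^\infty$ functions, to the statement that the decomposable unitary operators $\Pi_\min(g')$ act fibrewise via a representation whose derived action on $C_c^\infty(\FF^\times)$ equals $d\tau_{m,\nu}$. I would handle this by working with the closures of the symmetric operators $i\,d\Pi_\min(E)$ on the algebraic tensor product of $G$-smooth vectors with $C_c^\infty(\FF^\times)$. These are decomposable, and their fibres coincide with the closure of the operator on $C_c^\infty$. The passage from $G'$ to the finite cover $\widetilde{G'}$ is harmless: $\Pi_\min$ restricted to the cover of $G\times G'$ factors through $G\times\widetilde{G'}$, and we may simply define $\theta(\pi)$ as the fibre of this representation.
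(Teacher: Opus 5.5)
Your proposal is correct and follows the paper's proof. The isometry $T$ together with Lemma~\ref{lem:IntertwinerQandH} gives the fibrewise representation $\theta(\pi)$ with $\theta(\pi)|_{P'}=\tau_{m,\nu}|_{P'}$, and comparing the Key Lemma~\ref{lem:KeyLemma} with \eqref{eq:SL2Bessel}--\eqref{eq:SL2Bessel2} gives the stated relations for $d\pi(C)$ and $d\pi(D)$. You also spell out two points the paper leaves implicit: irreducibility of $\theta(\pi)$ via the Mackey argument for $P'$, and the fibrewise passage for the unbounded generators. For the latter you only need that the Key Lemma operator on $L^2(\calO_G)^\infty\odot C_c^\infty(\FF^\times)$ is contained in the generator of $\Pi_\min(\exp tE)$. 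You do not need, and should not claim, that its fibres equal the closures of the fibre operators on $C_c^\infty$: those closures are in general not self-adjoint (cf.\ Remark~\ref{rem:AbstractTheta}).
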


\begin{proof}
    It follows from the isomorphism \eqref{eq:IntertwinerIso} and Lemma~\ref{lem:IntertwinerQandH} that $\Pi_\min|_{G\times\widetilde{G'}}$ decomposes into $\pi\boxtimes\theta(\pi)$ with $\theta(\pi)$ a representation of $\widetilde{G'}$ on $L^2(\RR_+,|\xi|^{\frac{rd}{2}}\,d^\times\xi)$ resp. $L^2(\FF^\times,|\zeta|^{\delta\frac{rd}{2}}\,d^\times\zeta)$ on which $P'$ acts by $\tau_{m,\nu}|_{P'}$. The remaining part follows by comparing the formulas in Lemma~\ref{lem:KeyLemma} to \eqref{eq:SL2Bessel}, \eqref{eq:SL2Bessel1} and \eqref{eq:SL2Bessel2}.
\end{proof}

\begin{remark}\label{rem:AbstractTheta}
If $V$ is either Euclidean or complex, it turns out that the properties in Theorem~\ref{thm:AbstractTheta} characterize $\theta(\pi)$ uniquely, and this is made explicit in the next section. Unfortunately, for $V$ non-Euclidean the two inequivalent representations $\tau_{m,\nu}$ ($m\in\ZZ/2\ZZ$, $\nu\in i\RR$) of $G'=\PGL(2,\RR)$ both have the same restriction to $P'$ and the same infinitesimal action on $C_c^\infty(\RR^\times)$, so the information in Theorem~\ref{thm:AbstractTheta} is not sufficient to determine the theta lifts of all representations $\pi$. In the next section, we will consider some small $K$-types to solve this problem.
\end{remark}

\section{The Plancherel formula for \texorpdfstring{$\calO_G$}{OH}}\label{sec:ExplicitTheta}

In this section we use results from the literature to make the Plancherel formula for $\calO_G$ explicit in all cases. As a consequence, the theta correspondence from Theorem~\ref{thm:AbstractTheta} becomes completely explicit and is shown to be one-to-one.

\subsection{$V$ Euclidean}

If $V$ is Euclidean, $G$ is compact and the decomposition of $L^2(\calO_G)=L^2(G/G_c)$ is discrete and given by the Cartan--Helgason Theorem. For this, recall from Lemma~\ref{lem:MaxTorus} that $\frakt=\RR T_0$ is a maximal torus in $\frakg^{-\sigma}$. The Cartan--Helgason Theorem~\cite[Ch. V, Theorem 4.1]{Hel84} asserts that $L^2(\calO_G)$ decomposes into the multiplicity-free direct sum over all irreducible representations of $G$ of highest weight $\lambda\in\frakt_\CC^*$ such that
\begin{itemize}
    \item $\frac{\langle\lambda,\mu\rangle}{\langle\mu,\mu\rangle}\in\ZZ_{\geq0}$ for all $\mu\in\Sigma^+(\frakg_\CC,\frakt_\CC)$,
    \item $e^{\lambda}$ is trivial on $\exp(\frakt)\cap G_c$.
\end{itemize}
In view of Proposition~\ref{prop:RootSystem}~\eqref{prop:RootSystem2}, the first condition leads to $\lambda=k\beta$ such that $k\in\ZZ_{\geq0}$ if $d=1$ and $k\in2\ZZ_{\geq0}$ if $d>1$. But $e^{tT_0}\in G_c$ if and only if $t\in\pi\ZZ$ (see Lemma~\ref{lem:TorusStabilizer}), so the second condition forces $k\in2\ZZ_{\geq0}$ in all cases. This implies
$$ L^2(\calO_G) \simeq \widehat{\bigoplus_{k\in2\ZZ_{\geq0}}} \,\pi_{k\beta}, $$
where $\pi_{k\beta}$ is the irreducible representation of $G$ with highest weight $k\beta$.

Since $\rho_\frakt=(\frac{rd}{2}-1)\beta$ (see Proposition~\ref{prop:RootSystem}~\eqref{prop:RootSystem2}), the eigenvalue $d\pi_{k\beta}(C)$ of the Casimir element $C$ on $\pi_{k\beta}$ is given by
$$ d\pi_{k\beta}(C) = B(k\beta+2\rho_\frakt,k\beta) = -\frac{1}{32}k(k+rd-2), $$
where we have used that under the identification $\frakt\simeq\frakt^*$ in terms of the Killing form $B$, we have $B(\beta,\beta)=-\frac{1}{32}$.

Theorem~\ref{thm:AbstractTheta} implies that $\theta(\pi_{k\beta})$ is an irreducible unitary representation of a finite cover of $G'=\PSL(2,\RR)$ on $L^2(\RR_+,|\xi|^{\frac{rd}{2}}d^\times\xi)$ that agrees with $\tau^{\textup{hds}}_{k+\frac{rd}{2}}$ on $P'$ and whose derived representation agrees on $C_c^\infty(\RR_+)$ with $d\tau^{\textup{hds}}_{k+\frac{rd}{2}}$. In view of Theorem~\ref{thm:UnitaryDualPGL} this implies:

\begin{corollary}\label{cor:ExplicitThetaEuclidean}
    $\theta(\pi_{k\beta})=\tau^{\textup{hds}}_{k+\frac{rd}{2}}$ is the holomorphic discrete series representation of $\widetilde{\PSL}(2,\RR)$ with parameter $k+\frac{rd}{2}$.
\end{corollary}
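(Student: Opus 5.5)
The plan is to read off $\theta(\pi_{k\beta})$ from Theorem~\ref{thm:AbstractTheta} and then identify it through the uniqueness implicit in Theorem~\ref{thm:UnitaryDualPGL} and the subsequent remark on $\widetilde{\PSL}(2,\RR)$.

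First, insert the Casimir eigenvalue into the relation $d\pi(C)=-\frac{1}{32}(4\nu^2-(\frac{rd}{2}-1)^2)$. With $d\pi_{k\beta}(C)=-\frac{1}{32}k(k+rd-2)$ this gives $4\nu^2=(k+\frac{rd}{2}-1)^2$, so we may take
$$ \nu=-\tfrac{1}{2}\big(k+\tfrac{rd}{2}-1\big)=\tfrac{1-m}{2}, \qquad m=k+\tfrac{rd}{2}. $$
This is exactly the parameter of the (holomorphic) discrete series of parameter $m$. Because $r\geq2$ and $d\geq1$, we have $m\geq\frac{rd}{2}\geq1$. The equality $m=1$ occurs only when $k=0$ and $rd=2$, i.e.\ $r=2$, $d=1$; that case will be handled separately by inspection, or excluded because $\Pi_\min$ does not exist there. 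In all remaining cases $m>1$.

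Second, compare the actions. Theorem~\ref{thm:AbstractTheta} tells us that $\theta(\pi_{k\beta})$ is realized on $L^2(\RR_+,\xi^{\frac{rd}{2}}\,d^\times\xi)=L^2(\RR_+,\xi^{\frac{rd}{2}-1}d\xi)$. On this space $P'$ acts by \eqref{eq:GroupActionTau}, and $E$ acts on $C_c^\infty(\RR_+)$ by the operator \eqref{eq:SL2Bessel} with the $\nu$ just found. By the remark following Theorem~\ref{thm:UnitaryDualPGL}, $\tau^{\textup{hds}}_m$ of $\widetilde{\PSL}(2,\RR)$ is realized on the same Hilbert space by the same formulas. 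Hence the two unitary representations coincide on $P'$ and on the Lie algebra acting on the dense subspace $C_c^\infty(\RR_+)$.

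Third, conclude equality of the group representations. The subspace $C_c^\infty(\RR_+)$ is invariant under $P'$ and under $\frakg'$ (the operators are differential operators with smooth coefficients on $\RR_+$). The derived representations agree there, so their closures agree; this requires knowing that $C_c^\infty(\RR_+)$ is a core, or more directly that it contains enough analytic vectors. For the holomorphic discrete series the $K'$-finite vectors are $\xi^j e^{-\xi}$-type Laguerre functions, which are not compactly supported. Instead I would argue with essential self-adjointness. The operator $i\,d\tau(E)$ is a Bessel-type Sturm--Liouville operator on $\RR_+$ which, together with the dilation generator, generates the group. Alternatively, invoke Theorem~\ref{thm:UnitaryDualPGL}: among irreducible unitary representations of the cover, the restriction to $P'$ (on which the positive half-line $\RR_+$ is the spectrum of $F$) combined with the infinitesimal character pins down the representation. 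The spectrum of $-iF$ being contained in $\RR_+$ forces a holomorphic discrete series, and the Casimir $\frac{1}{4}(m-1)^2$-type value singles out the parameter $m$.

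The main obstacle is this identification step: justifying that agreement on $P'$ and of the infinitesimal action on $C_c^\infty$ determines the irreducible representation of the finite cover. I would rely on the classification of the unitary dual of $\widetilde{\PSL}(2,\RR)$ via the Casimir eigenvalue and the spectral support of $F$. The Casimir acts on smooth vectors by the scalar computed from $\nu$; the unitary representations with $F$-spectrum in one half-line are exactly the holomorphic discrete series (and the trivial representation); the Casimir value then fixes the parameter to $m=k+\frac{rd}{2}$, with the sign choice $m>1$ as above.
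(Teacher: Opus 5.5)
\textbf{Where your argument matches the paper, and where it breaks.} Your first two steps are the paper's argument. You insert $d\pi_{k\beta}(C)=-\frac{1}{32}k(k+rd-2)$ into Theorem~\ref{thm:AbstractTheta}, so $\theta(\pi_{k\beta})$ agrees with $\tau^{\textup{hds}}_m$, $m=k+\frac{rd}{2}$, on $P'$ and on $C_c^\infty(\RR_+)$. The paper then simply invokes the classification. For $m\geq 2$ your third step is also fine.

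The gap is your claim that a half-line $F$-spectrum plus the Casimir value singles out $\tau_m$. The Casimir only fixes $\nu^2$, i.e.\ $(m-1)^2$. So your ``sign choice $m>1$'' is exactly what has to be proved. Unitary lowest weight representations $\tau_\lambda$ of the covers also exist for $0<\lambda\leq 1$, and $\tau_\lambda$ and $\tau_{2-\lambda}$ have the same Casimir. Hence your criterion fails for $1<m<2$, and this range does occur.

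\textbf{The counterexample.} For $V=\Sym(3,\RR)$ and $k=0$ you get $m=\frac32$. Then $\tau_{1/2}$, the even Weil representation of $\mathrm{Mp}(2,\RR)$, is a genuine competitor on the same cover. Transport $\tau_{1/2}$ to $L^2(\RR_+,\xi^{1/2}\,d\xi)$ by multiplication with $\xi^{-1/2}$. It then has literally the same $P'$-action \eqref{eq:GroupActionTau} and the same operator $\xi\partial_\xi^2+\frac32\partial_\xi$ on $C_c^\infty(\RR_+)$. This is because \eqref{eq:SL2Bessel} depends only on $\nu^2$, and here $\nu=\pm\frac14$.

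This is precisely where your core worry bites. The local solutions $\xi^{-\frac{rd-2}{4}\pm\nu}$ are both square integrable at $0$ for $\xi^{\frac{rd}{2}-1}d\xi$ as soon as $|\nu|<\frac12$. So the operator is limit circle at $0$ and $C_c^\infty(\RR_+)$ is not a core. Concretely, $\tau_{3/2}$ and $\tau_{1/2}$ have lowest weight vectors $e^{-\xi}$ and $\xi^{-1/2}e^{-\xi}$, both in the Hilbert space and both eigenfunctions of the same formal $K'$-operator.

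\textbf{How to close the gap.} You need information beyond $P'$ and $C_c^\infty(\RR_+)$. One option: $e^{-\tr(x)}=T(1\otimes e^{-\xi})$ spans the lowest $\widetilde{K}$-type of $\Pi_\min$. It is therefore a smooth vector on which $E-F=(e,0,-e)\in\frakk$ acts by the differential operator, with $K'$-weight $\frac{rd}{2}=\frac32$. That weight does not occur in $\tau_{1/2}$, whose weights are $\frac12+2\ZZ_{\geq0}$. Alternatively, quote the classical $(\upO(3),\mathrm{Mp}(2,\RR))$ correspondence. For the Euclidean list, $\Sym(3,\RR)$ with $k=0$ is the only case with $1<m<2$.

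The paper's one-line appeal to Theorem~\ref{thm:UnitaryDualPGL} and the following remark passes over the same point, since that remark only lists parameters $>1$.

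\textbf{A smaller error.} Your aside that $\Pi_\min$ might not exist when $rd=2$ is wrong. $V=\Sym(2,\RR)\simeq\RR^{1,2}$ with $\co(V)\simeq\sp(2,\RR)\simeq\so(2,3)$ is allowed, since the existence exception requires $p,q\geq2$. For $k=0$ it gives $m=1$. There is no ambiguity there because $2-m=m$, but $\theta(\pi_0)$ is then a limit of holomorphic discrete series, not a discrete series.
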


\subsection{$V$ non-Euclidean}\label{sec:PlancherelNonEuclidean}

Assume that $V$ is real and non-Euclidean, and that $V\not\simeq\RR^{p,q}$ with $p+q$ odd. In this case $\calO_G=G/G_c$ is a non-Riemannian semisimple symmetric space with $\rank(G/G_c)=1$ by Lemma~\ref{lem:MaxTorus}, so its Plancherel formula is known by e.g. \cite{Mol92,vdB05}. It consists of a continuous and a discrete part which we treat separately.

For the continuous part, recall from Lemma~\ref{lem:MaxTorus} the maximal torus $\fraka=\RR H_0\subseteq\frakg^{-\sigma}\cap\frakg^{-\theta}$ with root system $\Sigma(\frakg,\fraka)$ of the form $\{\pm\alpha\}$, $\{\pm2\alpha\}$ or $\{\pm\alpha,\pm2\alpha\}$. Then the subalgebra $\frakp=\frakg^0\oplus\frakg^{\alpha}\oplus\frakg^{2\alpha}$ of $\frakg$ is parabolic, and we let $P=N_G(\frakp)=MAN$ denote the corresponding parabolic subgroup of $G$. The intersection $M\cap G_c$ is equal to the stabilizer $M_c$ of $c$ in $M$. Recall that we extend $c_1=c$ to a Jordan frame $c_1,c_2,\ldots,c_r$.

\begin{lemma}\label{lem:MHquotientForNonEuclidean}
    $M/M_c=\{1,m_0\}$, where $m_0\in M$ is an element with $m_0c_1=c_2$, $m_0c_2=c_1$ and $m_0h_0=-h_0$.
\end{lemma}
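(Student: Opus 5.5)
Our plan is to first pin down $M$ through its action on $V$, then show that every $m\in M$ either fixes $c_1$ or swaps $c_1$ and $c_2$, and finally exhibit an element that swaps them.

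\textbf{Step 1: what $M$ preserves.} We have $M=Z_K(\fraka)\exp(\frakm)$ with $\frakm=\frakg^0\cap\fraka^\perp$. The key is to show that every $m\in M$ preserves the two-dimensional subspace
$$ W=\RR(c_1-c_2)\oplus\RR h_0. $$
At the Lie algebra level, $\fraka=\RR H_0$ is centralized by $\frakg^0$. From the computation in Lemma~\ref{lem:TorusStabilizer}, done with $h_0$ in place of $t_0$, we get
$$ H_0(c_1-c_2)=-\tfrac12 h_0, \qquad H_0h_0=-8(c_1-c_2). $$
So $\exp(sH_0)$ acts on $W$ hyperbolically, as $\cosh$/$\sinh$ in $s$. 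Since $H_0^2=4$ on $W$, $W$ is the $4$-eigenspace of $H_0^2$ inside $e^\perp$.

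To confirm this, we check the other eigenvalues. By Proposition~\ref{prop:RootSystem} and \eqref{eq:Lxsquare}, $H_0^2$ acts by $1$ on $V_{1j}\oplus V_{2j}$ for $j\geq3$. On the part of $V_{12}$ orthogonal to $h_0$ it acts by $4$, and on $V_{ij}$ for $i,j\geq3$ it acts by $0$. So $W$ is not yet isolated; we refine the choice to
$$ U=\ker(H_0^2-4)\cap\{x: x\cdot c_j=0\ \mbox{for } j\geq 3\}\cap e^\perp, $$
or simply work with $V_{11}\oplus V_{12}\oplus V_{22}=V(c_1+c_2,1)$.

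Every $m\in M$ centralizes $\fraka$, hence commutes with $H_0$ up to sign. Being an automorphism that fixes $e$, $m$ therefore preserves the eigenspaces of $H_0^2$, and in particular preserves $\ker H_0^2\cap V(c_1+c_2,0)$. This forces $m$ to preserve $c_1+c_2$, because $c_1+c_2$ is the identity of the subalgebra orthogonal to $V(c_3+\dots+c_r,1)$. Consequently $m$ preserves $V(c_1+c_2,1)$, which is a rank-two Jordan algebra of spin-factor type, and it commutes with $H_0$ restricted there.

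\textbf{Step 2: classify the image of $c_1$.} Inside $V(c_1+c_2,1)$, the primitive idempotents $y$ with $H_0$-compatibility have the form
$$ y=\tfrac12(c_1+c_2)+a(c_1-c_2)+bh_0+u, $$
subject to idempotency. The condition that $m$ commutes with $\exp(\RR H_0)$, together with $mc_1$ being idempotent and $\tau(mc_1,e)=1$, should leave only $mc_1\in\{c_1,c_2\}$. Concretely, $mH_0m^{-1}=H_0$ gives
$$ [L(mc_1),L(mh_0)]=[L(c_1),L(h_0)], $$
and evaluating both sides on suitable vectors, for instance $c_1,c_2,h_0$, should yield $mc_1=c_1,\ mh_0=h_0$ or $mc_1=c_2,\ mh_0=-h_0$. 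This gives $[M:M_c]\leq2$, since $M_c$ is exactly the stabilizer of $c_1$.

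\textbf{Step 3: the nontrivial coset.} We exhibit $m_0$ explicitly, for example as $m_0=\exp(\pi T)$ with $T=[L(c_1),L(t_0)]$ for some $t_0\in V_{12}^+$ orthogonal to $h_0$. Then $T$ lies in $\frakg^{\pm2\alpha}\oplus\frakg^{-2\alpha}$-related compact directions. By Lemma~\ref{lem:TorusStabilizer}, $\exp(\tfrac{\pi}{2}\cdot)$ of such an element rotates $c_1-c_2$ to $-(c_1-c_2)$, which gives $m_0c_1=c_2$. It acts on $h_0$ by $-1$, because $h_0$ lies in the $V_{12}$ part orthogonal to $t_0$, where the element acts as a rotation by $\pi$. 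Since $m_0$ fixes $e$, it also centralizes $H_0$ up to the sign computation: $m_0H_0m_0^{-1}=[L(c_2),L(-h_0)]=[L(c_1),L(h_0)]=H_0$, so $m_0\in Z_K(\fraka)\subseteq M$.

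When $d=1$ there is no such $t_0$. In that case we instead use the Weyl-type element coming from $V_{1j}$, $j\geq3$, or, for $r=2$, the element $h_0$ used in Lemma~\ref{lem:OHisSymmetricSpace}.

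\textbf{Main obstacle.} We expect the hardest part to be Step 2: ruling out other images of $c_1$, and handling the degenerate cases $d=1$ and $r=2$ uniformly.
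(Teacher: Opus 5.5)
\textbf{There is a genuine gap in Step~2, the decisive step, and Step~3 constructs an element that is not in $M$.}

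\textbf{Step 2.} Every condition you plan to use is also satisfied by $m=e^{sH_0}\in A$: that $m$ is an automorphism fixing $e$, that $m$ commutes with $\exp(\RR H_0)$ (equivalently $[L(mc_1),L(mh_0)]=[L(c_1),L(h_0)]$), and that $mc_1$ is an idempotent with $\tau(mc_1,e)=1$. Yet $e^{sH_0}c_1=\frac12(c_1+c_2)+\frac12(\cosh(2s)(c_1-c_2)-\frac14\sinh(2s)h_0)$ is neither $c_1$ nor $c_2$ for $s\neq0$. These conditions only see $Z_G(\fraka)=MA$, so no evaluation on test vectors can give $mc_1\in\{c_1,c_2\}$. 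You need an input that separates $M$ from $MA$.

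The paper's input is this. $\frakm$ annihilates $c_1-c_2$ and $M$ has finitely many components, so $M\cdot(c_1-c_2)$ is finite. Commuting with $H_0$ and preserving $\tau$ force $m(c_1-c_2)=\pm e^{sH_0}(c_1-c_2)$, and looking at $m^n(c_1-c_2)$ then forces $s=0$. Finally $mh_0=\pm h_0$ together with $h_0^2=-16(c_1+c_2)$ identifies the stabilizers of $c_1-c_2$ and of $c_1$ in $M$.

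Your decomposition $M=Z_K(\fraka)\exp(\frakm)$ gives an equally short fix. $\exp(\frakm)$ fixes $c_1-c_2$. For $m\in Z_K(\fraka)$ write $m(c_1-c_2)=a(c_1-c_2)+bh_0$. Then $m(c_1+c_2)=(m(c_1-c_2))^2=(a^2-16b^2)(c_1+c_2)$ is a nonzero idempotent, so $a^2-16b^2=1$. Orthogonality of $m$ for $(x|y)=\tau(x,\vartheta y)$ gives $a^2+16b^2=1$. Hence $b=0$ and $a=\pm1$.

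This squaring identity also replaces your circular argument in Step~1 that $m$ fixes $c_1+c_2$. Your eigenvalue check there is wrong as well. For $y\in V_{12}$ with $\tau(y,h_0)=0$ one has $h_0y=\frac12\tau(h_0,y)(c_1+c_2)=0$, hence $H_0y=0$, so $H_0^2$ acts by $0$ there, not by $4$. Thus $W$ is exactly $\ker(H_0^2-4)$, as you first said, and it is $M$-stable, so no refinement is needed.

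\textbf{Step 3.} The same computation shows that $T=[L(c_1),L(t_0)]$ annihilates $h_0$, because $h_0\in V_{12}^-$ is $\tau$-orthogonal to $t_0\in V_{12}^+$. With the normalization of Lemma~\ref{lem:TorusStabilizer}, the swap happens at $\exp(\frac\pi2T_0)$ rather than $\exp(\pi T_0)$. This swapping element fixes $h_0$ instead of negating it. Consequently $\Ad(\exp(\frac\pi2T_0))H_0=[L(c_2),L(h_0)]=-H_0$: it represents the nontrivial Weyl group element in $N_K(\fraka)$ and does not lie in $M$.

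The paper instead takes $m_0$ to be the product of $w=\exp(\pi(c_1,0,-c_1))$ from Lemma~\ref{lem:OHisSymmetricSpace} with an element swapping $c_1,c_2$ and fixing $h_0$. Here $w$ fixes $c_1,c_2$ and acts by $-1$ on $V(c_1,\frac12)\ni h_0$. (The paper writes that second factor as $\exp(\frac\pi2(0,H_0,0))$, but $H_0$ acts hyperbolically on $W$; your compact element $\exp(\frac\pi2T_0)$ is the one that actually swaps.) The product satisfies $m_0c_1=c_2$, $m_0c_2=c_1$ and $m_0h_0=-h_0$, so $\Ad(m_0)H_0=[L(c_2),L(-h_0)]=H_0$ and $m_0\in Z_K(\fraka)\subseteq M$.
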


\begin{proof}
    We claim that $M\cdot(c_1-c_2)=\{\pm(c_1-c_2)\}$ and that $M_c=M_{c_1}$ is the stabilizer of $c_1-c_2$ in $M$. This will imply that $M/M_c\simeq M\cdot(c_1-c_2)=\{\pm(c_1-c_2)\}$.\\
    To this end, we first observe that $M\cdot(c_1-c_2)$ must be finite. In fact, from Proposition~\ref{prop:RootSystem}~\eqref{prop:RootSystem1} we know that
    $$ \frakm = \frakg_{0,h_0}\oplus\bigoplus_{3\leq i<j\leq r}\frakg_{ij}, $$
    where $\frakg_{0,h_0}$ denotes the stabilizer of $h_0$ in $\frakg_0$ and $\frakg_{ij}=[L(c_i),L(V_{ij})]$. This implies $\frakm\cdot(c_1-c_2)=\{0\}$. Since $M$ has finitely many connected components, it follows that $M\cdot(c_1-c_2)$ is finite.\\
    Next we argue that $M\cdot(c_1-c_2)\subseteq\{\pm(c_1-c_2)\}$. For this, we first observe (in a similar way to Lemma~\ref{lem:TorusStabilizer}) that the one parameter group $A=e^{\RR H_0}$ is acting on $V$ by
    \begin{align*}
        e^{tH_0}(c_1-c_2) &= \cosh(2t)(c_1-c_2)-\frac{1}{4}\sinh(2t)h_0,\\
        e^{tH_0}(c_1+c_2) &= c_1+c_2,\\
        e^{tH_0}h_0 &= \cosh(2t)h_0-4\sinh(2t)(c_1-c_2),\\
        e^{tH_0}x &= x && (x\in V_{12},\tau(x,h_0)=0),\\
        e^{tH_0}y &= \cosh(t)y+\sinh(t)z && (y\in V_{1j},z=-\frac{1}{2}L(h_0)y\in V_{2j}),\\
        e^{tH_0}z &= \cosh(t)z+\sinh(t)y && (z\in V_{2j},y=\frac{1}{2}L(h_0)z\in V_{1j}),\\
        e^{tH_0}w &= w && (w\in \bigoplus_{3\leq i\leq j\leq r}V_{ij}).
    \end{align*}
    So if $g\in M$, then $g$ centralizes $H_0$ and hence $g(c_1-c_2)$ must satisfy
    $$ e^{tH_0}g(c_1-c_2) = ge^{tH_0}(c_1-c_2) = \cosh(2t)g(c_1-c_2)-\frac{1}{4}\sinh(2t)gh_0. $$
    A similar expression holds for $gh_0$, so we conclude that $u=g(c_1-c_2)$ and $v=gh_0$ are solutions of the system
    \begin{align}
        e^{tH_0}u &= \cosh(2t)u-\frac{1}{4}\sinh(2t)v,\\
        e^{tH_0}v &= -4\sinh(2t)u+\cosh(2t)v.
    \end{align}
    In view of the action of $A=\exp(\RR H_0)$ on $V$ determined above, this shows that $u,v\in\RR(c_1-c_2)+\RR h_0$, more precisely $u=a(c_1-c_2)+bh_0$ and $v=16b(c_1-c_2)+ah_0$ for some $a,b\in\RR$. In particular, $g(c_1-c_2)=a(c_1-c_2)+bh_0$. Moreover, $g$ preserves the trace form $\tau$, which is on $\RR(c_1-c_2)+\RR h_0$ given by
    $$ \tau(a_1(c_1-c_2)+b_1h_0,a_2(c_1-c_2)+b_2h_0) = 2a_1a_2-32b_1b_2, $$
    so we know that $a^2-16b^2=1$. Writing $(a,b)=\pm(\cosh(2s),\frac{1}{4}\sinh(2s))$ for some $s\in\RR$ gives $g(c_1-c_2)=\pm e^{sH_0}(c_1-c_2)$ and hence
    $$ g^n(c_1-c_2)=(-1)^n e^{snH_0}(c_1-c_2) = (-1)^n\left(\cosh(2ns)(c_1-c_2)-\frac{1}{4}\sinh(2ns)h_0\right) \qquad (n\in\ZZ). $$
    For $\{g^n(c_1-c_2):n\in\ZZ\}$ to be finite, we clearly need $s=0$, so $g(c_1-c_2)=\pm(c_1-c_2)$.\\
    Now, this also implies that if $g\in M$ stabilizes $c_1$, then it also stabilizes $c_1-c_2$. Conversely, if $g\in M$ stabilizes $c_1-c_2$, then $gh_0= h_0$ by the same argument as above. As shown in the proof of Lemma~\ref{lem:D0commutators}, we have $h_0^2=-16(c_1+c_2)$, so $g(c_1+c_2)=-\frac{1}{16}g(h_0^2)=-\frac{1}{16}(gh_0)^2=-\frac{1}{16}h_0^2=c_1+c_2$ and hence $g$ also stabilizes $c_1+c_2$, whence $c_1$. This shows that $M_c$ is the stabilizer of $c_1-c_2$ in $M$.\\
    It remains to construct $m_0\in M$ with $m_0c_1=c_2$, $m_0c_2=c_1$ and $m_0h_0=-h_0$. Note that any $m_0\in G$ with these properties is automatically contained in $M$. We define $m_0$ to be the product (in whichever order) of the elements $\exp(\pi(c_1,0,-c_1))$ and $\exp(\frac{\pi}{2}(0,H_0,0))$. The latter element is contained in $G$ since $H_0\in\frakg$, and it maps $c_1\mapsto c_2$, $c_2\mapsto c_1$ and $h_0\mapsto h_0$. The former element is contained in $G$ and acts on $V(c_1,1)\oplus V(c_1,0)$ by $+1$ and on $V(c_1,\frac{1}{2})$ by $-1$ due to Lemma~\ref{lem:OHisSymmetricSpace}. It therefore maps $c_1\mapsto c_1$, $c_2\mapsto c_2$ and $h_0\mapsto-h_0$, so the claim follows.
\end{proof}

Identifying $\widehat{M/M_c}\simeq\xi\in\ZZ/2\ZZ$ we define for $\xi\in\ZZ/2\ZZ$ and $\lambda\in i\fraka^*$ the unitary principal series representation $\pi_{\xi,\lambda}$ by (normalized parabolic induction)
$$ \pi_{\xi,\lambda} = \Ind_{P}^G(\xi\otimes e^\lambda\otimes1). $$
We write $i\fraka_+^*$ for all $\lambda=\mu\alpha$ with $\mu\in i\RR_+$.

To describe the discrete part of the Plancherel formula, recall from Lemma~\ref{lem:MaxTorus} the maximal torus $\frakt=\RR T_0\subseteq\frakg^{-\sigma}\cap\frakg^\theta$ as well as the corresponding root system $\Sigma(\frakg_\CC,\frakt_\CC)$ that is expressed using $\beta\in\frakt_\CC^*$. The subalgebra $\frakq=\frakg_{\CC}^{0}\oplus\frakg_{\CC}^{\beta}\oplus\frakg_{\CC}^{2\beta}$ of $\frakg_\CC$ is parabolic and $\theta$-stable. For $\lambda\in\frakt_\CC^*$ we let $A_{\mathfrak{q}}(\lambda)$ denote Zuckerman's derived functor module.

The Plancherel formula for $\calO_G$ can now be written as (see e.g. \cite[equation (9.3)]{Mol92} or \cite[Theorem 10.21]{vdB05})
$$ L^2(\calO_G) \simeq \bigoplus_{\xi\in\ZZ/2\ZZ}\int^\oplus_{i\mathfrak{a}^*_+}\pi_{\xi,\lambda}\,d\lambda \oplus \widehat{\bigoplus_\lambda} A_{\mathfrak{q}}(\lambda). $$
The direct sum is over all discrete series parameters $\lambda$ which we now make explicit:

\begin{lemma}\label{lem:DSparameters}
    The discrete series parameters for $G/G_c$ are $\lambda=k\beta$ with $k+\frac{rd}{2}\in2\ZZ_{>0}$.
\end{lemma}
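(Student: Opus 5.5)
We plan to read off the discrete series parameters from the general description of the discrete spectrum for rank one semisimple symmetric spaces, as in Matsuki--Oshima type results in the form given in \cite{Mol92,vdB05}. For $G/G_c$ of rank one with compact Cartan subspace $\frakt=\RR T_0\subseteq\frakg^{-\sigma}\cap\frakg^\theta$, the discrete series representations are the $A_\frakq(\lambda)$ with $\lambda\in i\frakt^*$ subject to two requirements. The first is the \emph{integrality condition}: $\lambda+\rho_\frakt$ lifts to a character of the torus $\exp(\frakt)$ that is trivial on $\exp(\frakt)\cap G_c$, which is the same condition as in the Cartan--Helgason theorem for the compact dual. The second is the \emph{positivity (regularity) condition} $\langle\lambda+\rho_\frakt,\beta\rangle>0$.

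The first step is to make the integrality condition explicit. Write $\lambda=k\beta$. By Lemma~\ref{lem:TorusStabilizer}, $\exp(tT_0)\in G_c$ exactly when $t\in\pi\ZZ$. By Proposition~\ref{prop:RootSystem}~(2), $\beta(iT_0)=1$, so $e^{(\lambda+\rho_\frakt)}(\exp(tT_0))=e^{i(k+\frac{rd}{2}-1)t}$. The requirement that the relevant character is trivial on $\exp(\pi\ZZ T_0)$ therefore translates into a parity condition on $k+\frac{rd}{2}$. Here one must be careful whether the literature condition is stated for $\lambda+\rho$ or for $\lambda+2\rho$ (equivalently $\lambda$ together with a $\rho$-shift by $\rho(\frak u\cap\frakp)$ versus the full $\rho$), and we must normalize accordingly. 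We will match with van den Ban's formulation: the $\frakt$-parameter $\mu=\lambda+\rho_\frakt$ must satisfy $\mu\in\rho_\frakt+\Lambda$, where $\Lambda$ is the lattice of weights trivial on $\exp(\frakt)\cap G_c$, namely $2\ZZ\beta$ by Lemma~\ref{lem:TorusStabilizer}. Together with the shift by $\rho_\frakt=(\frac{rd}{2}-1)\beta$, this yields that $k+\frac{rd}{2}-1$ is odd relative to the appropriate base point, i.e.\ $k+\frac{rd}{2}\in2\ZZ$. As a consistency check we will compare with the compact case, where $k\in2\ZZ_{\ge0}$ and the corresponding $\widetilde{G'}$-parameter $k+\frac{rd}{2}$ is compatible with Corollary~\ref{cor:ExplicitThetaEuclidean}. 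We will also check against the theta side: via Theorem~\ref{thm:AbstractTheta} with $\nu=\frac{1-(k+rd/2)}{2}$, $\theta(A_\frakq(k\beta))$ must be a discrete series of $\PGL(2,\RR)$, which forces $k+\frac{rd}{2}$ to be even by Theorem~\ref{thm:UnitaryDualPGL}. This gives independent confirmation of the parity.

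The second step is positivity. The condition $\langle\lambda+\rho_\frakt,\beta\rangle>0$ gives $k+\frac{rd}{2}-1>0$, so combined with evenness we get $k+\frac{rd}{2}\in2\ZZ_{>0}$; note that $k$ may be negative when $rd$ is large. We also need to make sure that all such parameters actually occur, i.e.\ that the corresponding modules are nonzero and there are no extra vanishing constraints. For rank one this follows from the explicit lists in \cite{Mol92}, which only exclude singular parameters. Finally, $M/M_c$ (Lemma~\ref{lem:MHquotientForNonEuclidean}) plays no role for the discrete part.

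The main obstacle is the bookkeeping of the $\rho$-shift and of the precise group $\exp(\frakt)\cap G_c$ versus $\exp(\frakt)\cap G^\sigma$ (which differ when $r=2$ by Lemma~\ref{lem:OHisSymmetricSpace}). We will resolve this by using the Casimir eigenvalue $d\pi(C)=B(\lambda+2\rho_\frakt,\lambda)$ and the $\PGL(2,\RR)$ consistency check above to fix the correct normalization.
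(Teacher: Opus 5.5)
\textbf{There is a genuine gap in the integrality step.} This is exactly the step that produces the parity in the statement, and neither condition you write down is the correct one.
\begin{itemize}
\item If you require $e^{\lambda+\rho_\frakt}$ to be trivial on $\exp(\frakt)\cap G_c=\exp(\pi\ZZ T_0)$, you get $k+\frac{rd}{2}-1\in2\ZZ$. That is the \emph{opposite} parity to the lemma in every case.
\item If you instead require $\lambda\in 2\ZZ\beta$, you get $k\in2\ZZ$. This disagrees with the lemma whenever $\frac{rd}{2}$ is odd. For example, for $V=M(3,\RR)$ we have $\frac{rd}{2}=3$, and the lemma requires $k$ odd.
\end{itemize}
The phrase ``odd relative to the appropriate base point'' does not bridge this.

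The condition the paper uses (from \cite{Mat88,OM84}) is imposed on $\mu=\lambda+2\rho_\frakt-2\rho_{\frakt,\textup{cpt}}$. This is $\lambda$ shifted by the sum of the \emph{noncompact} roots in $\fraku$, i.e.\ the highest weight of the lowest $K$-type of $A_\frakq(\lambda)$. One needs $\langle\mu,2\beta\rangle/\langle2\beta,2\beta\rangle\in\ZZ$ and $e^\mu$ trivial on $\exp(\frakt)\cap G_c$.

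Evaluating this requires the compact roots, which your plan never computes:
\begin{itemize}
\item $\theta$ acts on the root vectors of Proposition~\ref{prop:RootSystem}~(2) through $y\mapsto\vartheta y$ (because $\vartheta c_1=c_1$ and $t_0\in V^+$). Hence $\rho_{\frakt,\textup{cpt}}=(\frac{rd_+}{2}-1)\beta$ with $d_+=\dim V_{ij}^+$.
\item Therefore $\mu=(k+r(d-d_+))\beta$, and the condition reads $k+r(d-d_+)\in2\ZZ$.
\item Turning this into $k+\frac{rd}{2}\in2\ZZ$ needs the structural input $d=2d_+$ for $r>2$, and $d=p+q-2$ even for $r=2$.
\end{itemize}
Only then does Lemma~\ref{lem:TorusStabilizer} take care of the remaining condition. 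Your positivity step, $k+\frac{rd}{2}-1>0$, is fine and matches the fair-range condition in the paper.

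\textbf{Your proposed fallback does not repair this.}
\begin{itemize}
\item The Casimir eigenvalue $B(\lambda+2\rho_\frakt,\lambda)$ only determines $\nu=\pm\frac12(k+\frac{rd}{2}-1)$. It is blind to the integrality lattice.
\item The $\PGL(2,\RR)$ argument via Theorem~\ref{thm:AbstractTheta} could at most show that $k+\frac{rd}{2}$ even is \emph{necessary}. Even that needs the extra fact that $\theta(\pi)$ factors through $G'=\PGL(2,\RR)$. The theorem only gives a representation of a finite cover $\widetilde{G'}$, and for Euclidean $V$ the parameters $k+\frac{rd}{2}$ need not be even integers.
\item It can never show that every $k$ with $k+\frac{rd}{2}\in2\ZZ_{>0}$ actually contributes to $L^2(\calO_G)$. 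That is what the lemma asserts, and what Corollary~\ref{cor:ExplicitThetaNonEuclidean}(b) and the one-to-one statement rely on.
\item Combined with your first integrality condition, the theta check would leave no discrete spectrum at all. It would signal the error without supplying the correct parameter set.
\end{itemize}
In the paper's logic this lemma is an input to the explicit theta correspondence, so the theta side cannot substitute for deriving it from the Matsuki/Oshima--Matsuki description.
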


\begin{proof}
    By \cite{Mat88,OM84} the discrete series parameters are all $\lambda\in\frakt_\CC^*$ such that
    \begin{itemize}
        \item (fair range) $\langle\lambda+\rho_\frakt,\gamma\rangle>0$ for all positive roots $\gamma$,
        \item $\frac{\langle\mu,\gamma\rangle}{\langle\gamma,\gamma\rangle}\in\ZZ$ for all positive roots $\gamma$, where $\mu=\lambda+2\rho_\frakt-2\rho_{\frakt,\textup{cpt}}$, where $2\rho_{\frakt,\textup{cpt}}$ is the sum of all positive roots of $(\frakk_\CC,\frakt_\CC)$,
        \item $e^\mu$ is trivial on $\exp(\frakt)\cap G_c$.
    \end{itemize}
    Since $\rho_\frakt=(\frac{rd}{2}-1)\beta$ by Proposition~\ref{prop:RootSystem}~\eqref{prop:RootSystem2}, the first condition becomes $k+\frac{rd}{2}>1$. (Here the form $\langle\cdot,\cdot\rangle$ is normalized such that $\langle\beta,\beta\rangle>0$.) The second condition is sufficient for $\gamma=2\beta$ which is always a root (in contrast to $\beta$ itself). Since $\rho_\frakt=(\frac{rd}{2}-1)\beta$ and $\rho_{\frakt,\textup{cpt}}=(\frac{rd_+}{2}-1)\beta$ with $d_+=\dim V_{ij}^+$ ($i<j$) this condition becomes $k+r(d-d_+)\in2\ZZ$. For $r=2$ we obtain $k\in2\ZZ$, which is equivalent to $k+\frac{rd}{2}\in2\ZZ$ since $d=p+q-2$ is even. For $r>2$ we always have $d=2d_+$, so the condition is $k+\frac{rd}{2}\in2\ZZ$. Under these assumptions, the third condition is always satisfied by Lemma~\ref{lem:TorusStabilizer}.
    %We use the formulation in \cite[Theorem 2.9]{Vog88} (note that $A_\frakq(\lambda)$ is $A(X)$ with $X=\CC_{\lambda+\rho_\frakt}$ in the notation of \cite{Vog88}). The discrete series parameters are all $\lambda\in\frakt_\CC^*$ in the fair range which integrate to a character of the corresponding Levi subgroup of $H$ and are trivial on its intersection with $G^\sigma$ (see e.g. \cite[Theorem 2.9]{Vog88}).
    %For $\lambda=k\beta$ to be in the fair range, we need $\Re\langle\lambda+\rho_\frakt,\beta\rangle>0$. Since $\rho_t=(\frac{rd}{2}-1)\beta$ by Proposition~\ref{prop:RootSystem}~\eqref{prop:RootSystem2}, this condition becomes $\Re(k+\frac{rd}{2}-1)>0$. (Here we normalize the form $\langle\cdot,\cdot\rangle$ such that $\langle\beta,\beta\rangle>0$.)
    %Next, we determine those $\lambda=k\beta$ that lift to a character of the Levi subgroup. By Proposition~\ref{prop:RootSystem}~\eqref{prop:RootSystem2} we know that $\exp(tT_0)=1$ if and only if $t\in2\pi\ZZ$ for $r>2$, and $t\in\pi\ZZ$ for $r=2$. Hence, $k\in\ZZ$ for $r>2$ and $k\in2\ZZ$ for $r=2$.
    %Finally, $\exp(\frakt)\cap G^\sigma=\exp(\ZZ T_0)$, so the condition that $e^\lambda$ is trivial on $\exp(\frakt)\cap G^\sigma$ is equivalent to $k\in2\ZZ$.
\end{proof}

We first compute the eigenvalue of the Casimir element $C\in U(\frakg)$ in the representations $\pi$ occurring in the Plancherel formula for $\calO_G$.

\begin{lemma}\label{lem:CasimirEigenvalues}
    Let $C\in U(\frakg)$ be the Casimir element with respect to the invariant bilinear form $B$ (see \eqref{eq:NormalizationKillingForm}).
    \begin{enumerate}[(1)]
        \item\label{lem:CasimirEigenvaluesInd} For $\pi=\pi_{\xi,\lambda}$ with $\xi\in\ZZ/2\ZZ$ and $\mu=\lambda(H_0)\in\CC\simeq\fraka_\CC^*$ the Casimir eigenvalue is given by
        $$ d\pi(C) = -\frac{1}{32}\left(\mu+\frac{rd}{2}-1\right)\left(\mu-\frac{rd}{2}+1\right). $$
        \item\label{lem:CasimirEigenvaluesAqL} For $\pi=A_{\frakq}(\lambda)$ with $\lambda=k\beta$, the Casimir eigenvalue is given by
        $$ d\pi(C) = -\frac{1}{32}k(k+rd-2). $$
    \end{enumerate}
\end{lemma}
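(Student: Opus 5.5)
Both Casimir eigenvalues come from the standard formula $d\pi(C)=B(\Lambda,\Lambda+2\rho)$ (or $B(\Lambda,\Lambda)-B(\rho,\rho)$) for the infinitesimal character. We use the same normalization of $B$ as in the Euclidean case, where we already obtained $d\pi_{k\beta}(C)=-\frac{1}{32}k(k+rd-2)$ from $B(\beta,\beta)=-\frac{1}{32}$.

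\emph{Part (2).} The infinitesimal character of $A_\frakq(\lambda)$ is that of the finite-dimensional-type parameter $\lambda+\rho$, with $\rho$ a $\rho$ for $\frakg_\CC$ compatible with $\frakq$. Since $\lambda=k\beta$ is orthogonal to the roots of the Levi factor, $B(\lambda,\rho)=B(\lambda,\rho_\frakt)$, and so
$$ d\pi(C)=B(\lambda+\rho,\lambda+\rho)-B(\rho,\rho)=B(k\beta,k\beta+2\rho_\frakt). $$
Substituting $\rho_\frakt=(\frac{rd}{2}-1)\beta$ from Proposition~\ref{prop:RootSystem}~\eqref{prop:RootSystem2} gives $k(k+rd-2)B(\beta,\beta)$. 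To get $B(\beta,\beta)=-\frac{1}{32}$, compute $B(T_0,T_0)=\tau(t_0,t_0)=32$ from \eqref{eq:NormalizationKillingForm}. Since $\beta(T_0)=-i$, the element of $\frakt_\CC$ dual to $\beta$ is $-iT_0/32$, whose $B$-square is $-\frac{1}{32}$. This is exactly the Euclidean computation and gives the claimed formula.

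\emph{Part (1).} The infinitesimal character of $\pi_{\xi,\lambda}=\Ind_P^G(\xi\otimes e^\lambda\otimes 1)$ is $\lambda+\rho_\frakm$, up to the Weyl group, on a Cartan subalgebra $\frakh=\frakt_\frakm\oplus\fraka$. Here $\rho=\rho_\frakm+\rho_\fraka$ with $\rho_\fraka=(\frac{rd}{2}-1)\alpha$ by Proposition~\ref{prop:RootSystem}~\eqref{prop:RootSystem1}, where $\delta=1$. Because $\xi$ is a character of $M/M_c$ trivial on $M_0$, the $\frakm$-part is the trivial representation and contributes $\rho_\frakm$. Hence
$$ d\pi(C)=B(\lambda+\rho_\frakm,\lambda+\rho_\frakm)-B(\rho,\rho)=B(\lambda,\lambda)-B(\rho_\fraka,\rho_\fraka), $$
using that $\fraka$ and $\frakt_\frakm$ are $B$-orthogonal. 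Now $B(H_0,H_0)=\tau(h_0,h_0)=-32$. Since $\alpha(H_0)=1$, the vector dual to $\alpha$ is $-H_0/32$, so $B(\alpha,\alpha)=-\frac{1}{32}$. With $\lambda=\mu\alpha$ we get
$$ d\pi(C)=-\tfrac{1}{32}\Bigl(\mu^2-\bigl(\tfrac{rd}{2}-1\bigr)^2\Bigr), $$
which factors as claimed.

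\emph{Main obstacle.} The delicate point is the signs and normalization of $B$. Formula \eqref{eq:NormalizationKillingForm} fixes $B$ only on the spaces $\frakg_{ij}$, and $B$ has to be extended invariantly. One must also check that the dual-vector conventions give $B(\beta,\beta)$ and $B(\alpha,\alpha)$ with the same sign $-\frac{1}{32}$, even though $T_0$ and $H_0$ lie in the $+$ and $-$ eigenspaces of $\theta$ respectively.

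As a cross-check that avoids infinitesimal characters, I would evaluate $C$ on an $M$-invariant vector of the principal series via the radial part: on $\fraka$-homogeneous functions of degree $\lambda-\rho_\fraka$, $C$ acts by $B(\lambda,\lambda)-B(\rho_\fraka,\rho_\fraka)$. Agreement with the Euclidean result under $\mu\leftrightarrow -k-\frac{rd}{2}+1$, which is the same polynomial, confirms the signs.
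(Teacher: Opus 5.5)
Your proposal is correct and follows essentially the same route as the paper: it inserts the infinitesimal characters $\lambda+\rho_\frakm$ of $\pi_{\xi,\lambda}$ (Knapp, Prop.~8.22) and $\lambda+\rho$ of $A_\frakq(\lambda)$ (Knapp--Vogan, Cor.~5.25) into $d\pi(C)=B(\Lambda,\Lambda)-B(\rho,\rho)$, with $B(\alpha,\alpha)=B(\beta,\beta)=-\frac{1}{32}$ and $\rho_\fraka$, $\rho_\frakt$ taken from Proposition~\ref{prop:RootSystem}. The one step you leave implicit is $\rho=\rho_\frakm+\rho_\fraka$, i.e.\ $\tr(\ad X|_{\mathfrak{n}})=0$ for $X\in\frakm$; it needs a word because $\frakm$ can have a split center here (e.g.\ $\frakg=\sl(r,\RR)$), but it does hold: it is clear on $\frakm\cap\frakg^{\theta}$, and on $\frakm\cap\frakg^{-\theta}\subseteq\frakg^{\sigma}$ it follows since $\sigma\theta$ fixes $\fraka$ (hence preserves $\mathfrak{n}$) while $\sigma\theta X=-X$.
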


\begin{proof}
    We identify $\fraka_\CC\simeq\fraka_\CC^*$ and $\frakt_\CC\simeq\frakt_\CC^*$ via the form $B$ and use these identifications to define invariant bilinears form on $\fraka_\CC^*$ and $\frakt_\CC^*$, also denoted by $B$. By \eqref{eq:NormalizationKillingForm} these forms satisfy $B(\alpha,\alpha)=B(\beta,\beta)=-\frac{1}{32}$. By Lemma~\ref{prop:RootSystem} we have $\rho_\fraka=(\frac{rd}{2}-1)\alpha$ and $\rho_\frakt=(\frac{rd}{2}-1)\beta$. Then \eqref{lem:CasimirEigenvaluesInd} follows from \cite[Proposition 8.22]{Kna86} and \eqref{lem:CasimirEigenvaluesAqL} follows from \cite[Corollary 5.25]{KV95}, together with the fact that the Casimir element acts on an irreducible representation with infinitesimal character $\lambda$ by $B(\lambda+\rho,\lambda-\rho)$.
\end{proof}

Unfortunately, the Casimir eigenvalues of $\pi_{\xi,\lambda}$ for the two different $\xi\in\ZZ/2\ZZ$ are equal, so this information is not sufficient to determine $\theta(\pi_{\xi,\lambda})$. We therefore consider the restriction of some small $K$-types. For this, let $K=G^\theta$ denote the maximal compact subgroup of $G$ corresponding to the Cartan involution $\theta$, and $K'=\operatorname{PO}(2)=\operatorname{O}(2)/\{\pm I\}\subseteq G'=\PGL(2,\RR)$.

\begin{lemma}\label{lem:KtypeAnalysisNonEuclidean}
    The restriction of $\Pi_\min$ to $K\times K'$ contains representations of the form $W_0\boxtimes\CC_{\triv}$ and $W_1\boxtimes\CC_{\sgn}$, where $\CC_{\triv}$ resp. $\CC_{\sgn}$ denotes the trivial resp. non-trivial character of $K'=\operatorname{PO}(2)$ and $W_\xi$ is a $K$-type of $\pi_{\xi,\lambda}$ and not of $\pi_{\xi+1,\lambda}$ ($\xi\in\ZZ/2\ZZ$), except in the case $V\simeq\RR^{p,q}$ with $p-q\equiv2\pmod4$ where $W_\xi$ is a $K$-type of $\pi_{\xi+1,\lambda}$ and not of $\pi_{\xi,\lambda}$ ($\xi\in\ZZ/2\ZZ$).
\end{lemma}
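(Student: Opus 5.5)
We will exhibit explicit $K\times K'$-finite vectors in $L^2(\calO)$ and identify their $K$-types. Under $\Pi_\min$, the maximal compact subgroup of $\calG$ acts by the $K$-finite vectors of the $L^2$-model, which are of the form $p(x)\,\psi(x)$. Here $\psi(x)=\widetilde K(|x|)$ is a radial Bessel-type function in the Hilbert norm $|x|^2=(x|x)$, the analogue of the lowest $K$-type $\xi\mapsto|\xi|^\nu K_\nu(|\xi|)$ of $\tau_{m,\nu}$, and $p$ is a polynomial (cf.\ \cite{HKM14}). The minimal $K_{\calG}$-type is spanned by the single radial function $\psi_0$. The first step is to check that $\psi_0$ is invariant under $K\times\operatorname{SO}(2)/\{\pm1\}$. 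The $K$-invariance holds because $K$ preserves $(\cdot|\cdot)$. The $\operatorname{SO}(2)$-invariance holds because $\frakk\cap\frakg'$ is spanned by $(e,0,-e)$, which lies in $\frakk$ by \eqref{eq:LieAlgK} since $\vartheta e=e$. We then check how $-\id_V$ acts on $\psi_0$: it acts trivially since $\psi_0$ is even. This gives $W_0=\CC\psi_0\boxtimes\CC_{\triv}$ with $W_0$ the trivial $K$-type. The second step is to use the next $K_{\calG}$-type, spanned by $\ell(x)\psi_1(x)$ with $\ell$ linear. On the $K'$-invariant piece this should be $\tau(x,e)\psi_1(x)$ or, better, the piece $\tau(x,v)\psi_1$ with $v$ ranging over a $K$-stable subspace. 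We look for the vectors there that are odd under $-\id_V$ and annihilated by $(e,0,-e)$. Odd functions give $\CC_{\sgn}$ on the non-identity component of $\operatorname{PO}(2)$, and the $K$-representation they span defines $W_1$. A natural candidate is the $K$-module $V^-$ or $V^+\cap e^\perp$, on which $K$ acts via its defining action on $V$.

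The third step is to decide which of $\pi_{0,\lambda}$, $\pi_{1,\lambda}$ contains $W_\xi$. By Frobenius reciprocity, $\operatorname{Hom}_K(W,\pi_{\xi,\lambda})\simeq\operatorname{Hom}_{K\cap M}(W|_{K\cap M},\xi)$, where $\xi$ is viewed as a character of $M/M_c=\{1,m_0\}$ from Lemma~\ref{lem:MHquotientForNonEuclidean}, extended trivially on $M_c$. Note that $K\cap M$ meets both cosets, since $m_0$ can be taken in $K$ as a product of elliptic elements. For $W_0$ trivial, the character must be trivial on $m_0$, so $W_0$ lies in $\pi_{0,\lambda}$ and not in $\pi_{1,\lambda}$. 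For $W_1$ we need a vector in $W_1$ fixed by $K\cap M_c$ on which $m_0$ acts by $-1$, and no such vector on which $m_0$ acts by $+1$. Taking $W_1$ to be the span of $V^\pm$ inside $V$, the $M_c$-fixed vectors should be spanned by $c_1-c_2$ and $c_1+c_2$, projected to the right $\vartheta$-eigenspace. By Lemma~\ref{lem:MHquotientForNonEuclidean}, $m_0$ acts on $c_1-c_2$ by $-1$, and on $c_1+c_2$ by $+1$. Because $\tau(x,e)$ is $K'$-nontrivial, the relevant $K$-type should be the one containing $c_1-c_2$ but orthogonal to $e$. This would place $W_1$ in $\pi_{1,\lambda}$ only.

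The main obstacle is the exceptional case $V\simeq\RR^{p,q}$ with $p-q\equiv2\pmod4$. Here $r=2$, $M/M_c$ interacts with the center $Z(G)$, and the element $m_0$ may act on the relevant fixed vectors with an additional sign coming from $\det$ on $\upO(p)\times\upO(q)$. I would treat rank $2$ separately and explicitly. In that case $G$ is essentially $\upO(p-1,q-1)$ acting on $e^\perp$, and the $K$-types are harmonic polynomials on $\RR^{p-1}\oplus\RR^{q-1}$. I would then compute the action of $m_0$, a reflection-type element, on the lowest $K$-types, and find a parity that flips precisely when $p-q\equiv2\pmod4$. Verifying the annihilation by $(e,0,-e)$ via the Bessel operator, and checking that no other vector of the same $K$-type sneaks in, is routine but tedious.
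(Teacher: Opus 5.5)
Your treatment of $W_0$ for $r>2$ matches the paper: the spherical $K_\calG$-type gives $\CC_{\triv}\boxtimes\CC_{\triv}$, and the trivial $K$-type lies only in $\pi_{0,\lambda}$. Your Frobenius reciprocity criterion is also correct. The construction of $W_1$, however, fails.

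\textbf{The choice of $W_1$ for $r>2$.} The input you need is that for $r>2$ the $K_\calG$-types of $\Pi_\min$ have highest weights $k\beta$. Hence $\frakp_\CC=\{(u,T,\vartheta u)\}$ itself occurs, and you restrict it via $\Ad$ to $K\times K'$. Your description of this $K_\calG$-type as spanned by functions $\ell(x)\psi_1(x)$ with $\ell$ linear cannot be right, since $\dim\frakp_\CC>n$.

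Your test for $\CC_{\sgn}$ (odd under $-\id_V$, annihilated by $(e,0,-e)$) is the right one. By \eqref{eq:LieBracketInCO},
$[(e,0,-e),(u,0,\vartheta u)]=(0,2L(u+\vartheta u),0)$.
So inside $\frakp_\CC$ the test singles out exactly the vectors $(u,0,-u)$ with $u\in V^-$. For \emph{every} $u\in V^+$, not just $u\in\RR e$, the vector $(u,0,u)$ is rotated into $(0,L(u),0)$ and lies in a two-dimensional $K'$-type. Hence $V^+\cap e^\perp$, which you select because it contains $c_1-c_2$, is not paired with $\CC_{\sgn}$ at all.

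It would not separate the two series either. We have $M=M_c\cup m_0M_c$, with $M_c$ fixing $c_1,c_2$ and $m_0$ swapping them. So for $r>2$ the vector $(r-2)(c_1+c_2)-2(c_3+\cdots+c_r)\in V^+\cap e^\perp$ is $M$-invariant, and this $K$-type also occurs in $\pi_{0,\lambda}$. Projecting to $e^\perp$ does not remove the $m_0$-even direction.

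The correct choice is $W_1=V^-_\CC$. Its $M_c$-fixed, $m_0$-odd vector is $h_0\in V_{12}^-$; note that $c_1\pm c_2\in V^+$ have zero projection to $V^-$. The step missing from your proposal is that $V^-_\CC$ has \emph{no} $M\cap K$-fixed vector. The elements $m_j=\exp(\pi(c_j,0,-c_j))$, $j\geq3$, lie in $M\cap K$ and act by $-1$ on $V(c_j,\tfrac12)\supseteq V_{ij}$. This forces any fixed vector into $V_{12}^-$, where $m_0$ acts by $-1$.

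\textbf{The rank-two case.} For $r=2$ your Step~1 does not apply, and the exception is not explained. Here $G=\SO(p-1,q)$ with $K=\textup{S}(\upO(p-1)\times\upO(q))$; the group $\upO(p-1,q-1)$ you name is essentially $G_c$. For $p\geq q$ the $K_\calG$-types are $V_k=\calH^k(\RR^{p+1})\boxtimes\calH^{k+\frac{p-q}{2}}(\RR^{q+1})$. So for $p\neq q$ the lowest one is not one-dimensional, and there is no radial spherical vector.

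Use the branching law $\calH^k(\RR^n)|_{\upO(n-1)\times\upO(1)}\simeq\bigoplus_{\ell=0}^k\calH^\ell(\RR^{n-1})\boxtimes\CC_{\sgn}^{k-\ell}$. It gives $W_0=\calH^0(\RR^{p-1})\boxtimes\calH^{\frac{p-q}{2}}(\RR^q)$ paired with $\CC_{\triv}$ inside $V_0$. It gives $W_1=\calH^1(\RR^{p-1})\boxtimes\calH^{\frac{p-q}{2}}(\RR^q)$ paired with $\CC_{\sgn}$ inside $V_1$.

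Moreover, $\calH^a(\RR^{p-1})\boxtimes\calH^b(\RR^q)$ is a $K$-type of $\pi_{\xi,\lambda}$ iff $a+b\equiv\xi\pmod 2$. So the exception for $p-q\equiv2\pmod4$ comes from the parity of the degree $\frac{p-q}{2}$ in the lowest $K_\calG$-type, not from an extra determinant sign of $m_0$. Your proposal leaves this computation undone.
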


\begin{proof}
    We first show the statement for $r=2$, i.e. $\calG=\SO(p+1,q+1)$ with $G=\SO(p-1,q)$ and $G'\simeq\SO(2,1)$ with $p+q$ even. Then $K=\textup{S}(\upO(p-1)\times\upO(q))$ and $K'\simeq\textup{S}(\upO(2)\times\upO(1))\simeq\upO(2)$. Let us assume that $p\geq q$, then the $K$-types of $\Pi_\min$ are given by $V_k=\calH^k(\RR^{p+1})\boxtimes\calH^{k+\frac{p-q}{2}}(\RR^{q+1})$, $k\in\ZZ_{\geq0}$, where we write $\calH^k(\RR^n)$ for the irreducible representation of $\upO(n)$ or $\SO(n)$ on the space of harmonic homogeneous polynomials on $\RR^n$ of degree $k$. They satisfy the following classical branching law, which can be extracted from \cite[Appendix 7.5]{KobaMano11} together with classical results on Gegenbauer polynomials:
    $$ \calH^k(\RR^n)|_{\upO(n-1)\times\upO(1)} \simeq \bigoplus_{\ell=0}^k \calH^\ell(\RR^{n-1})\boxtimes\CC_{\sgn}^{k-\ell}, $$
    where use the notation $\CC_{\sgn}^m$ for $\CC_{\triv}$ if $m$ is even and $\CC_{\sgn}$ if $m$ is odd. Restricting the $K$-types for $k=0$ and $k=1$ to $K\times K'$ gives 
    \begin{align*}
        V_0|_{K\times K'} &\simeq \bigoplus_{\ell=0}^{\frac{p-q}{2}}\big(\calH^0(\RR^{p-1})\boxtimes\calH^\ell(\RR^q)\big)\boxtimes\CC_{\sgn}^{\frac{p-q}{2}-\ell},\\
        V_1|_{K\times K'} &\simeq \bigoplus_{\ell=0}^{\frac{p-q}{2}+1}\Big[\big(\calH^1(\RR^{p-1})\boxtimes\calH^\ell(\RR^q)\big)\boxtimes\CC_{\sgn}^{\frac{p-q}{2}+1-\ell}\Big]\oplus\Big[\big(\calH^0(\RR^{p-1})\boxtimes\calH^\ell(\RR^q)\big)\boxtimes\calH^1(\RR^2)\Big].
    \end{align*}
    It is known (see \cite[Lemma 2.2 and the discussion thereafter]{HT93}) that $\calH^k(\RR^{p-1})\boxtimes\calH^\ell(\RR^q)$ is contained in $\pi_{\xi,\lambda}$ iff $k+\ell\equiv\xi\pmod2$. We can therefore choose $W_0=\calH^0(\RR^{p-1})\boxtimes\calH^{\frac{p-q}{2}}(\RR^q)$ and $W_1=\calH^1(\RR^{p-1})\boxtimes\calH^{\frac{p-q}{2}}(\RR^q)$. %If $\frac{p-q}{2}$ is even, we can therefore choose $W_0=\calH^0(\RR^{p-1})\boxtimes\calH^{0}(\RR^q)$ and $W_1=\calH^1(\RR^{p-1})\boxtimes\calH^{\O}(\RR^q)$. If $\frac{p-q}{2}$ is odd, we exchange the spaces and choose $W_0=\calH^1(\RR^{p-1})\boxtimes\calH^{0}(\RR^q)$ and $W_1=\calH^0(\RR^{p-1})\boxtimes\calH^{0}(\RR^q)$.    
    The argument for $p<q$ is similar, in this case we can choose $W_0=\calH^{\frac{q-p}{2}}(\RR^{p-1})\boxtimes\calH^0(\RR^q)$ and $W_1=\calH^{\frac{q-p}{2}+1}(\RR^{p-1})\boxtimes\calH^0(\RR^q)$.

    Now let us assume that $r>2$, then the $K$-types of $\Pi_\min$ have highest weight $k\beta$ ($k\in\ZZ_{\geq0}$), where $\beta$ is the highest weight of $\frakp_\CC$ (see e.g. \cite[Theorem 3.8]{HKM14}). In particular, the trivial representation $\CC_{\triv}$ and the representation $\frakp_\CC$ are $K$-types of $\Pi_\min$.
    
    Restricting the trivial representation to $K\times K'$ clearly yields $\CC_{\triv}\boxtimes\CC_{\triv}$, and the trivial representation only occurs in those parabolically induced representations whose induction data for $M$ are trivial, i.e. it occurs in $\pi_{0,\lambda}$ and not in $\pi_{1,\lambda}$. We can therefore choose $W_0=\CC_{\triv}$.

    We now restrict $\frakp_\CC$ to $K\times K'$. Recall from \eqref{eq:LieAlgP} that
    $$ \frakp_\CC = \{(u,T,\vartheta(u)):u\in V_\CC,T=T^*\}. $$
    Then $K'=\operatorname{PO}(2)$ acts on the subspace $\{(u,0,-u):u\in V^-\}$ by the sign character and $K\subseteq G\subseteq\Aut(V)$ acts by its restriction to $V^-$. So $\frakp_\CC|_{K\times K'}$ contains $V_\CC^-\boxtimes\CC_{\sgn}$. It remains to show that $W_1=V^-_\CC$ is a $K$-type of $\pi_{1,\lambda}$, but not of $\pi_{0,\lambda}$. By Frobenius reciprocity this is equivalent to the statement that the representation of $M\cap K$ on $V^-_\CC$ contains the non-trivial character of $M\cap K$, but not the trivial representation. It clearly contains the non-trivial character as $\CC h_0$, so we have to show that it does not contain the trivial representation.

    Note that for every $1\leq j\leq r$ the element $m_j=\exp(\pi(c_j,0,-c_j))$ is contained in $G$ and acts on $V(c_j,1)\oplus V(c_j,0)$ by $+1$ and on $V(c_j,\frac{1}{2})$ by $-1$ (see Lemma~\ref{lem:OHisSymmetricSpace}). For $j\geq3$ it commutes with $H_0$, so it is contained in $M$. Since $V_{ij}\subseteq V(c_j,\frac{1}{2})$ for every $i\neq j$, it follows that any copy of the trivial representation in $V^-=\bigoplus_{1\leq i<j\leq r}V_{ij}^-$ has to be contained in $V_{12}^-$. But the element $m_0\in M\cap K$ constructed in the proof of Lemma~\ref{lem:MHquotientForNonEuclidean} acts on $V_{12}^-$ by $-1$. So $V^-$ does not contain a copy of the trivial representation of $M\cap K$.
\end{proof}

\begin{corollary}\label{cor:ExplicitThetaNonEuclidean}
    \begin{enumerate}[(a)]
        \item For $\xi\in\ZZ/2\ZZ$ and $\lambda\in i\fraka^*$ we have $\theta(\pi_{\xi,\lambda})=\tau_{\xi,\nu}$, where $\nu=\frac{1}{2}\lambda(H_0)\in i\RR$, except for $V\simeq\RR^{p,q}$ with $p-q\equiv2\pmod4$ where $\theta(\pi_{\xi,\lambda})=\tau_{\xi+1,\nu}$.
        \item For $\lambda=k\beta$, $k\in\ZZ$, $k>-(\frac{rd}{2}-1)$, we have $\theta(A_{\frakq}(\lambda))=\tau_{k+\frac{rd}{2}}^{\textup{ds}}$.
    \end{enumerate}
\end{corollary}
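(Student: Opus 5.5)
We would combine Theorem~\ref{thm:AbstractTheta} with the Casimir eigenvalues of Lemma~\ref{lem:CasimirEigenvalues} and the $K$-type information of Lemma~\ref{lem:KtypeAnalysisNonEuclidean}. By Theorem~\ref{thm:AbstractTheta}, for each $\pi$ in the support of the Plancherel measure, $\theta(\pi)$ is an irreducible unitary representation of $\widetilde{G'}$ on $L^2(\RR^\times,|\xi|^{\frac{rd}{2}}\,d^\times\xi)$. It agrees with $\tau_{m,\nu}$ on $P'$ and infinitesimally on $C_c^\infty(\RR^\times)$, where $d\pi(C)=-\frac{1}{32}(4\nu^2-(\frac{rd}{2}-1)^2)$. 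Since $-\id_V\in G'$ acts as in $\tau_{m,\nu}$ and $G'=\PGL(2,\RR)$ is generated by $P'$ and the identity component, $\theta(\pi)$ factors through $\PGL(2,\RR)$ itself (or its relevant cover). Theorem~\ref{thm:UnitaryDualPGL} then says it is one of the $\tau_{m,\nu}$, determined up to the ambiguity of Remark~\ref{rem:AbstractTheta}.

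\textbf{Part (b).} For $\pi=A_\frakq(k\beta)$, Lemma~\ref{lem:CasimirEigenvalues}\eqref{lem:CasimirEigenvaluesAqL} gives $-\frac{1}{32}k(k+rd-2)$. Equating this with $-\frac{1}{32}(4\nu^2-(\frac{rd}{2}-1)^2)$ gives $4\nu^2=(k+\frac{rd}{2}-1)^2$, so $\nu=\pm\frac{1-(k+rd/2)}{2}$. By Lemma~\ref{lem:DSparameters}, $k+\frac{rd}{2}\in2\ZZ_{>0}$, and in fact it is at least $2$, since $k+\frac{rd}{2}>1$. Among the representations in Theorem~\ref{thm:UnitaryDualPGL} with this differential-operator realization (note that \eqref{eq:SL2Bessel} depends only on $\nu^2$ and not on $m$), the only unitary one is the discrete series $\tau^{\textup{ds}}_{k+\frac{rd}{2}}$. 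The principal and complementary series would require $\nu\in i\RR$ or $|\nu|<\frac12$, both excluded since $|\nu|\geq\frac12$. This proves (b), and no $m$-ambiguity arises because $\tau^{\textup{ds}}$ does not depend on $m$.

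\textbf{Part (a).} For $\pi=\pi_{\xi,\lambda}$ with $\mu=\lambda(H_0)\in i\RR$, Lemma~\ref{lem:CasimirEigenvalues}\eqref{lem:CasimirEigenvaluesInd} gives $d\pi(C)=-\frac{1}{32}(\mu^2-(\frac{rd}{2}-1)^2)$. Hence $4\nu^2=\mu^2$, i.e.\ $\nu=\pm\frac12\mu$, and $\theta(\pi_{\xi,\lambda})=\tau_{m,\nu}$ with $\nu=\frac12\lambda(H_0)$ (using $\tau_{m,\nu}\simeq\tau_{m,-\nu}$ for $\FF=\RR$) for some $m\in\ZZ/2\ZZ$. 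The main obstacle is pinning down $m$, and for this we use $K\times K'$-types.

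\textbf{Determining $m$.} The lowest $K'$-type of $\tau_{m,\nu}$ is the character $\CC_{\sgn}^m$ of $\operatorname{PO}(2)$, which is the only one-dimensional $K'$-type of $\tau_{m,\nu}$; the other $K'$-types are two-dimensional $\operatorname{O}(2)$-modules. By Lemma~\ref{lem:KtypeAnalysisNonEuclidean}, $\Pi_\min$ contains $W_0\boxtimes\CC_{\triv}$. In the direct integral decomposition, the $W_0$-isotypic part lies in the integral over $\pi_{0,\lambda}$ only, which has positive measure. The discrete summands $A_\frakq$ cannot account for this, since their $\theta$-lifts are discrete series, which have no one-dimensional $K'$-types. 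Hence, on a set of positive measure, $\theta(\pi_{0,\lambda})$ contains $\CC_{\triv}$, so $m=0$ there. Similarly $W_1\boxtimes\CC_{\sgn}$ forces $m=1$ for $\pi_{1,\lambda}$.

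To pass from "a set of positive measure" to all $\lambda$, we would use that the multiplicity space of $W_\xi$ varies measurably, or argue by continuity in $\lambda$. Concretely, we would show that the $K'$-type $\CC_{\sgn}^m$ occurs in $\theta(\pi_{\xi,\lambda})$ either for almost every $\lambda$ or for none. This is the delicate point: it needs the non-vanishing of the $W_\xi$-component for almost every $\lambda$, which we expect to obtain from the explicit $K$-finite vectors in $L^2(\calO)$ and the fact that their spectral projections are real-analytic in $\lambda$. In the exceptional case $V\simeq\RR^{p,q}$ with $p-q\equiv2\pmod4$, Lemma~\ref{lem:KtypeAnalysisNonEuclidean} swaps the roles of $\xi$ and $\xi+1$, and the same argument yields $\theta(\pi_{\xi,\lambda})=\tau_{\xi+1,\nu}$.
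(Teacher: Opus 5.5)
Your proposal is correct and follows the paper's proof essentially step for step. For (b) you match Casimir eigenvalues and invoke the classification of the unitary dual of $\PGL(2,\RR)$; for (a) you use the $K\times K'$-types of Lemma~\ref{lem:KtypeAnalysisNonEuclidean}, the absence of one-dimensional $K'$-types in the discrete series, and analyticity in $\lambda$ to pass from ``a set of positive measure'' to ``almost all $\lambda$''. The analyticity you leave as an expectation is what the paper supplies via \cite{Fra22} and \cite{Mol92}: the spectral projections are intertwining operators given by spherical distributions $u_{\xi,\lambda}$ on $\calO_G$ that depend holomorphically on $\lambda$, so the image of the relevant $(K\times K')$-type, once nonzero on a set of positive measure, is nonzero for almost all $\lambda$.
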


\begin{proof}
    We first show (b). By Theorem~\ref{thm:AbstractTheta} and Lemma~\ref{lem:CasimirEigenvalues}, the unitary representation $\theta(A_{\frakq}(\lambda))$ of $G'$ has an irreducible infinite-dimensional restriction to $P'$, and its Lie algebra action on $C_c^\infty(\RR^\times)$ is the same as the one of the discrete series representation $\tau^{\textup{ds}}_{k+\frac{rd}{2}}$. This implies that the irreducible unitary representations $\theta(A_{\frakq}(\lambda))$ and $\tau^{\textup{ds}}_{k+\frac{rd}{2}}$ have the same infinitesimal character, so by the classification of the unitary dual of $\PGL(2,\RR)$, it follows that they are equivalent.

    To show (a) we first observe that by the same argument as above, $\theta(\pi_{\xi,\lambda})$ is isomorphic to $\tau_{\eta,\nu}$ for some $\eta\in\ZZ/2\ZZ$. We claim that $\eta=\xi$ (resp. $\eta=\xi+1$ in the case $V\simeq\RR^{p,q}$ with $p-q\equiv2\pmod4$). Note that by Lemma~\ref{lem:KtypeAnalysisNonEuclidean} there is a $(K\times K')$-representation $W$ in $\Pi_\min|_{K\times K'}$ which is a $(K\times K')$-type of $\pi_{\xi,\lambda}\boxtimes\tau_{\eta,\nu}$ for $\eta=\xi$ (resp. $\eta\neq\xi$), but not for $\eta\neq\xi$ (resp. $\eta=\xi$). Since this $K$-type does not occur in the discrete spectrum (the discrete series for $G'=\PGL(2,\RR)$ does never contains $\CC_{\triv}$ or $\CC_{\sgn}$ as $K'$-type), it has to occur for some part of the continuous spectrum. So for a subset of $\lambda\in i\fraka^*$ of positive measure, we must have $\eta=\xi$ (resp. $\eta=\xi+1$). To show that this implies $\eta=\xi$ (resp. $\eta=\xi+1$) for almost all $\lambda\in i\fraka^*$, we use an analyticity argument.

    By \cite{Fra22}, the projections to the representations in the direct integral are given in terms of a family of intertwining operators
    $$ L^2(\calO)^\infty \to \pi_{\xi,\lambda}\boxtimes L^2(\RR^\times,|\zeta|^{\frac{rd}{2}-1}\,d^\times\zeta), $$
    and by \cite{Mol92} these intertwining operators can be written as (after possibly renormalizing them)
    \begin{equation}
        f\mapsto\left[(h,\xi)\mapsto\int_{\calO_G}u_{\xi,\lambda}(x)f(\xi h^{-1}x)\,d\mu_G(x)\right],\label{eq:FamilyOfIntertwiners}
    \end{equation}
    where $u_{\xi,\lambda}$ is a family of spherical distributions on $\calO_G$ depending holomorphically on $\lambda\in i\fraka^*$. Now let $W\subseteq L^2(\calO)^\infty$ be the $(K\times K')$-type from above, embedded into $L^2(\calO)$. By the argument above, the image of $W$ under \eqref{eq:FamilyOfIntertwiners} is non-zero for some set of $\lambda\in i\fraka^*$ of positive measure. This implies that the above family of intertwining operators cannot be constant equal to zero. But since it depends holomorphically on $\lambda$, the image of $W$ is non-trivial for almost all $\lambda\in i\fraka^*$. This shows that the trivial representation resp. the sign character is a $K'$-type in $\tau_{\eta,\nu}$ ($\nu=\frac{1}{2}\lambda(H_0)$) for almost all $\lambda$. But this can only be the case if $\eta=\xi$ (resp. $\eta=\xi+1$).
\end{proof}

\subsection{$V$ complex}

Now assume that $V$ is complex, then $\calO_G=G/G_c$ is a complex symmetric space and its Plancherel formula is purely continuous.

Recall the maximal split torus $\fraka=\RR H_0$ of $\frakg^{-\sigma}\cap\frakg^{-\theta}$, where $H_0=[L(c_1),L(h_0)]$, and form the parabolic subgroup $P$ with Lie algebra $\frakp=\frakg^0\oplus\frakg^\alpha\oplus\frakg^{2\alpha}$. Write $P=MAN$ for the according Langlands decomposition with $A=\exp\fraka$. Then $\frakm$ has the one-dimensional compact torus $\RR(iH_0)$ as ideal. More precisely:

\begin{lemma}
    The map
    $$ M/M_c\to\{\cos(t)(c_1-c_2)+\tfrac{1}{4}\sin(t)h_0:t\in\RR\}, \quad mM_c\mapsto m\cdot(c_1-c_2), $$
    is a diffeomorphism. In particular, we have an isomorphism of Lie groups
    $$ \upU(1)\to M/M_c, \quad e^{2it}\mapsto e^{itH_0}(M_c). $$
\end{lemma}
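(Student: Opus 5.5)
We adapt the proof of Lemma~\ref{lem:MHquotientForNonEuclidean} to the complex case. The plan is to show that $M_c$ is exactly the stabilizer of $c_1-c_2$ in $M$. Then $M/M_c$ is identified with the orbit $M\cdot(c_1-c_2)$, and it remains to determine that orbit.

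\emph{Step 1: the orbit lies in the two-dimensional real plane spanned by $c_1-c_2$ and $h_0$.} Let $g\in M$. Since $g$ centralizes $H_0$, the vectors $u=g(c_1-c_2)$ and $v=gh_0$ satisfy the same linear system under $e^{tH_0}$ as in the proof of Lemma~\ref{lem:MHquotientForNonEuclidean}. The action of $e^{tH_0}$ on the Peirce spaces is given by the same formulas as there; these are complex linear since $H_0$ is complex linear. Comparing growth rates shows that $u,v\in\CC(c_1-c_2)+\CC h_0$, with $u=a(c_1-c_2)+bh_0$ and $v=16b(c_1-c_2)+ah_0$ for some $a,b\in\CC$. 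The form $\tau_\FF$ is $G$-invariant and equals $2a^2-32b^2$ on this pair, so $a^2-16b^2=1$.

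\emph{Step 2: cutting down to the circle.} We cannot argue by finiteness here, because $\frakm\ni iH_0$ does not annihilate $c_1-c_2$. Instead we use compactness modulo $M_c$. We have $M=(M\cap K)\cdot M_0'$, where the Lie algebra of $M_0'$ annihilates $c_1-c_2$. Write $(a,b)=(\cosh z,\tfrac14\sinh z)$ with $z\in\CC$, so that $u=e^{(z/2)H_0}(c_1-c_2)$; this holds up to a sign, which can be absorbed into $z$. The orbit is then the image of a closed subgroup of the complex one-parameter group $\{e^{wH_0}\}$ applied to $c_1-c_2$. Since $g$ is also unitary, $\theta(g)=g$ for $g\in M\cap K$, and $\theta$ acts as $-1$ on $H_0$. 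Compatibility with $\theta$, equivalently the requirement that $u$ remain bounded under powers $g^n$ as in the non-Euclidean proof, forces $\Re z=0$. Hence $u=\cos(t)(c_1-c_2)+\tfrac14\sin(t)h_0$ up to sign conventions, for $t$ real; here we use $\cosh(it)=\cos t$ and $\sinh(it)=i\sin t$, and the factor $i$ is absorbed into the parametrization.

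Conversely, $e^{isH_0}\in M$ because $iH_0$ commutes with $H_0$ and lies in $\frakg$, since $\frakg$ is complex. So $e^{isH_0}(c_1-c_2)=\cos(2s)(c_1-c_2)+\tfrac14\sin(2s)h_0$ realizes the entire circle. This also shows that $-(c_1-c_2)$ is attained at $s=\pi/2$, so no separate element $m_0$ is needed.

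\emph{Step 3: the stabilizer.} We show that the stabilizer of $c_1-c_2$ in $M$ equals $M_c$, exactly as before. If $g$ fixes $c_1-c_2$, then $gh_0=h_0$ by Step 1, and hence $g(c_1+c_2)=-\tfrac1{16}(gh_0)^2=c_1+c_2$, so $g$ fixes $c_1$. Conversely, if $g\in M$ fixes $c_1$, then $g$ preserves $c_1+c_2$ and therefore fixes $c_1-c_2$; the preservation of $c_1+c_2$ uses that $M$ preserves $h_0$ up to the linear relation from Step 1. We must also check that the orbit map is a diffeomorphism. It is an injective immersion of homogeneous spaces onto an embedded circle, hence a diffeomorphism. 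Composing with $s\mapsto e^{isH_0}M_c$ gives the group isomorphism, since $e^{isH_0}\in M_c$ exactly when $s\in\pi\ZZ$, as in Lemma~\ref{lem:TorusStabilizer}.

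The main obstacle is Step 2: ruling out non-real $z$ rigorously. We would try first the boundedness argument with powers $g^n$, using that $M$ has compact image modulo its $\frakm$-trivial part. Failing that, we would use that $M\subseteq K\exp(\frakp\cap\frakm)$ and that $\frakp\cap\frakm$ annihilates $c_1-c_2$ after removing $\RR H_0\subseteq\fraka$.
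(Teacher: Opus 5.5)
Your route is the one the paper has in mind: it only says the proof is analogous to Lemma~\ref{lem:MHquotientForNonEuclidean}. Steps~1 and~3 are fine. In fact Step~1 gives $g(c_1+c_2)=-\tfrac{1}{16}(gh_0)^2=(a^2-16b^2)(c_1+c_2)=c_1+c_2$ for every $g\in M$, which is exactly what Step~3 uses.

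\textbf{The gap: Step~2.} This is the only new point, and you leave it open. The one concrete reason you give is also misdirected: $H_0\notin\frakm$ (since $\frakm\cap\fraka=\{0\}$), so ``removing $\RR H_0$'' is vacuous. What is missing is the structure of $\frakm$.
\begin{itemize}
\item By Proposition~\ref{prop:RootSystem} with $\FF=\CC$ one has $\frakm=\frakm'\oplus\RR\,iH_0$, where $\frakm'=\{D\in\frakg_0:Dh_0=0\}\oplus\bigoplus_{3\le i<j\le r}\frakg_{ij}$. This $\frakm'$ is $\theta$-stable and kills $\CC(c_1-c_2)+\CC h_0$.
\item The element $iH_0=[L(c_1),L(ih_0)]$ is central in $\frakm$ and lies in $\frakk$. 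The reason is that $\vartheta$ is conjugate-linear (a complex-linear $\vartheta$ would make $\tau(x,\vartheta y)$ indefinite), so $\vartheta(ih_0)=ih_0$.
\item Hence $\frakm\cap\frakp\subseteq\frakm'$, and therefore $M\cdot(c_1-c_2)=(M\cap K)\cdot(c_1-c_2)$.
\end{itemize}
With this in place, your ``compatibility with $\theta$'' finishes at once. On $G$ we have $\theta(g)=\vartheta g\vartheta$, so each $g\in M\cap K$ commutes with $\vartheta$. Thus $u=a(c_1-c_2)+bh_0$ is $\vartheta$-fixed, which means $a\in\RR$ and $b\in i\RR$. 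Then $a^2-16b^2=1$ becomes $a^2+16|b|^2=1$. Two alternatives also work:
\begin{itemize}
\item $\|e^{(z/2)H_0}(c_1-c_2)\|^2=2\cosh(2\Re z)$, and unitarity of $M\cap K$ forces $\Re z=0$.
\item Closer to the paper: $M$ has finitely many components and its identity component acts on the plane only through the compact group $\exp(\RR\,iH_0)$, so the power argument with $g^n$ goes through.
\end{itemize}

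\textbf{The factor $i$.} It cannot be ``absorbed into the parametrization'', and this exposes a misprint in the statement. Since $(iH_0)^2=-H_0^2$ on $\CC(c_1-c_2)+\CC h_0$, one has $e^{isH_0}(c_1-c_2)=\cos(2s)(c_1-c_2)-\tfrac{i}{4}\sin(2s)h_0$, not $+\tfrac14\sin(2s)h_0$. Your own Step~1 shows the printed set cannot be the orbit: $a=\cos t$, $b=\tfrac14\sin t$ gives $a^2-16b^2=\cos(2t)$, i.e.\ $\tau_\FF(u,u)=2\cos(2t)\neq 2$. The correct circle is $\{\cos(t)(c_1-c_2)+\tfrac{i}{4}\sin(t)h_0:t\in\RR\}$, i.e.\ $h_0$ replaced by $ih_0\in V_{12}^+$. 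That is what your repaired argument proves.

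\textbf{The group isomorphism.} You still need $M_c$ to be normal. In the basis $(c_1-c_2,\tfrac{i}{4}h_0)$, each $g\in M$ acts on this real plane by a rotation. Restriction therefore gives a homomorphism $M\to\SO(2)$ whose kernel is the stabilizer of $c_1-c_2$, which is $M_c$ by Step~3. Surjectivity comes from the elements $e^{isH_0}$, and these lie in the kernel exactly when $s\in\pi\ZZ$.
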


\begin{proof}
    The proof is similar to the proof of Lemma~\ref{lem:MHquotientForNonEuclidean}, so we omit it.
\end{proof}

For $k\in\ZZ$ we consider the character
$$ \chi_k:M\to\CC^\times, \quad \chi_k(e^{itH_0}m)=e^{2ikt} \qquad (t\in\RR,m\in M_c). $$
For $k\in\ZZ$ and $\lambda\in i\fraka^*$ we form the principal series representation $\pi_{k,\lambda}$ (normalized parabolic induction)
$$ \pi_{k,\lambda} = \Ind_{P}^G(\chi_k\otimes e^\lambda\otimes1). $$
Then the Plancherel formula for $\calO_G$ reads (see e.g. \cite[Theorem 10.21]{vdB05})
$$ L^2(\calO_G) \simeq \widehat{\bigoplus_{k\in\ZZ}}\int_{i\fraka^*_+}^\oplus\pi_{k,\lambda}\,d\lambda, $$
where we write $i\fraka^*_+$ for all $\lambda=\mu\alpha$ with $\mu\in i\RR_+$.

Just as in Lemma~\ref{lem:CasimirEigenvalues} one shows:

\begin{lemma}\label{lem:EigenvaluesOfCandDcomplex}
    The Casimir elements $C,D\in U(\frakg)$ act on $\pi_{k,\lambda}$ by the scalars
    $$ -\frac{1}{32}(\mu^2-\rho^2)-\frac{1}{8}k^2, \qquad \mbox{and} \qquad -\frac{1}{8}i\mu k, $$
    where $\mu=\lambda(H_0)$ and $\rho=\rho_\fraka(H_0)=rd-2$.
\end{lemma}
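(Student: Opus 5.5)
The plan is to follow the proof of Lemma~\ref{lem:CasimirEigenvalues}\eqref{lem:CasimirEigenvaluesInd}, now keeping track of the complex structure. Since $V$ is complex, $\frakg$ is a complex Lie algebra viewed as real, and $\frakg_\CC\simeq\frakg\times\frakg$ via $X\mapsto\frac12(X\mp IiX)$. Under this splitting, $C$ and $D$ become combinations of the two Casimir elements $C_\pm$ of the two factors, namely $C=C_++C_-$ and $D=\pm i(C_+-C_-)$ up to the normalisation fixed by \eqref{eq:NormalizationKillingForm}. So it suffices to compute the infinitesimal character of $\pi_{k,\lambda}$ as a pair $(\Lambda_+,\Lambda_-)$ of weights for a Cartan subalgebra of $\frakg$ (complex). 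Then each $C_\pm$ acts by $B_\pm(\Lambda_\pm+\rho_\pm,\Lambda_\pm-\rho_\pm)$, i.e.\ $B_\pm(\Lambda_\pm,\Lambda_\pm)-B_\pm(\rho_\pm,\rho_\pm)$.

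\textbf{Step 1.} Compute the infinitesimal character of the induced representation $\pi_{k,\lambda}=\Ind_P^G(\chi_k\otimes e^\lambda\otimes1)$ via \cite[Proposition 8.22]{Kna86}: it is the infinitesimal character of the $M$-representation shifted by $\lambda$. Only the $\fraka\oplus\RR(iH_0)$-part matters for the difference from the spherical case $k=0,\lambda=0$. The relevant complex one-dimensional torus is $\CC H_0=\fraka\oplus i\fraka$. The character $\chi_k\otimes e^\lambda$ restricted to it is the real-linear functional $sH_0+tiH_0\mapsto \mu s+2ikt$, using $\chi_k(e^{itH_0})=e^{2ikt}$. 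Decompose this into its complex-linear and antilinear parts,
\[
\Lambda_\pm(H_0)=\tfrac12(\mu\pm 2k),
\]
with the exact signs to be fixed by the convention for $I$. The $\rho$-shift is handled just as in Lemma~\ref{lem:CasimirEigenvalues}: $\rho_\fraka(H_0)=rd-2$ (here $\delta=2$ in Proposition~\ref{prop:RootSystem}), split equally between the two factors.

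\textbf{Step 2.} Evaluate the forms. From \eqref{eq:NormalizationKillingForm} we have $B(\alpha,\alpha)=-\frac1{32}$ on $\fraka^*$, where $B$ is the real part of the complex form. Hence the corresponding complex forms on each factor carry the normalisation constant accordingly. Expanding gives:
\begin{itemize}
\item $C$ acts by $-\frac1{32}\bigl(\mu^2+(2k)^2-\rho^2\bigr)=-\frac1{32}(\mu^2-\rho^2)-\frac18k^2$;
\item $D$, being the difference of the two factor Casimirs times $i$, acts by the cross term $-\frac1{32}\cdot i\cdot 4\mu k=-\frac18 i\mu k$.
\end{itemize}
The $\rho$ contributions cancel in $D$ because they are equal in both factors.

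The main obstacle is bookkeeping of normalisations. This includes the factor $2$ in $\chi_k$, the identification $\frakg_\CC\simeq\frakg\times\frakg$, and the precise relation between the real form $B$ and the two complex forms. Together these fix the constants $\frac18$ and the sign in $D$. To pin these down, I would check the result directly on the smallest case: compute $d\pi(C)$ and $d\pi(D)$ on the $M$-representation using the explicit basis $H_0, iH_0$ and their $B$-dual basis $-32H_0$, $32\, iH_0$ (up to sign). Only the $\CC H_0$ directions contribute beyond the $\rho$-shift, and this verifies the coefficients.
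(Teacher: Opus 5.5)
This is essentially the paper's argument. The paper proves the lemma ``just as in Lemma~\ref{lem:CasimirEigenvalues}'', i.e.\ via the infinitesimal character of the induced representation (Knapp, Proposition~8.22) and the fact that a Casimir element acts by $B(\Lambda+\rho,\Lambda-\rho)$. Your splitting $C=C_++C_-$, $D=I(C_+-C_-)$, with each factor carrying the form $\tfrac12 B_\CC$ ($B_\CC$ the complex-bilinear form with real part $B$), together with $\Lambda_\pm(H_0)=\tfrac12(\mu\pm2k)$ and $\rho$ split evenly, makes this explicit and gives the stated constants.

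One slip in your proposed check: since $B(H_0,H_0)=-32$ and $B(iH_0,iH_0)=32$, the $B$-dual basis of $H_0,iH_0$ is $-\tfrac1{32}H_0,\ \tfrac1{32}iH_0$, not $-32H_0,\ 32\,iH_0$. With the correct dual basis, the $\CC H_0$-parts are $-\tfrac1{32}H_0^2+\tfrac1{32}(iH_0)^2$ for $C$ and $-\tfrac1{16}H_0\,(iH_0)$ for $D$, and on $\chi_k\otimes e^\lambda$ they give exactly $-\tfrac1{32}\mu^2-\tfrac18k^2$ and $-\tfrac18 i\mu k$.
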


In view of Theorem~\ref{thm:UnitaryDualPGL}, this leads to the following explicit $\theta$ correspondence. 

\begin{corollary}\label{cor:ExplicitThetaComplex}
    For $k\in\ZZ$ and $\lambda\in i\fraka^*$, we have $\theta(\pi_{k,\lambda})=\tau_{k,\nu}$ with $\nu=\frac{1}{2}\lambda(H_0)\in i\RR$.
\end{corollary}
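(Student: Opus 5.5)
The plan is to combine Theorem~\ref{thm:AbstractTheta} with Lemma~\ref{lem:EigenvaluesOfCandDcomplex} and the classification in Theorem~\ref{thm:UnitaryDualPGL}, mirroring the argument for Corollary~\ref{cor:ExplicitThetaNonEuclidean}(b). The difficulty of the non-Euclidean case, where $C$ alone cannot see the parity $\xi$, should disappear here, because for $\FF=\CC$ we have two independent invariants $C$ and $D$.

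\textbf{Step 1: the pair $(m,\nu)$ attached to $\pi_{k,\lambda}$.} By Theorem~\ref{thm:AbstractTheta}, $\theta(\pi_{k,\lambda})$ is an irreducible unitary representation on $L^2(\CC^\times,|\zeta|^{rd}\,d^\times\zeta)$. It agrees with $\tau_{m,\nu}$ on $P'$, and its derived action agrees with $d\tau_{m,\nu}$ on $C_c^\infty(\CC^\times)$. Here $(m,\nu)$ is any solution of
$$ d\pi(C)=-\tfrac18\big(\nu^2+m^2-(\tfrac{rd}{2}-1)^2\big), \qquad d\pi(D)=-\tfrac14 im\nu . $$
Set $\mu=\lambda(H_0)$ and insert the eigenvalues from Lemma~\ref{lem:EigenvaluesOfCandDcomplex}, namely $-\frac1{32}(\mu^2-\rho^2)-\frac18k^2$ and $-\frac18 i\mu k$. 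Then $m=k$, $\nu=\mu/2$ solves both equations. I will check that $\rho^2/32$ matches $\frac18(\frac{rd}{2}-1)^2$ under the paper's normalization of $\rho$; it is exactly this constant that must agree. I will also note which other pairs solve the system. Since $m\nu$ and $m^2+\nu^2$ are fixed, the solutions are $(m,\nu)$, $(-m,-\nu)$, $(\nu,m)$ and $(-\nu,-m)$. With $\nu\in i\RR$ and $m\in\ZZ$ real, the swapped pairs are inadmissible unless both entries vanish. The remaining pair gives an equivalent representation, since $\tau_{m,\nu}\simeq\tau_{-m,-\nu}$.

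\textbf{Step 2: pin down the representation from its $P'$-action and derived action.} I will compare $\theta(\pi_{k,\lambda})$ with $\tau_{k,\mu/2}$. Both are irreducible unitary representations of $\widetilde{G'}$ on the same Hilbert space, with the same restriction to $P'$ and the same infinitesimal action of $\sl(2,\CC)$ on the dense $P'$-stable subspace $C_c^\infty(\CC^\times)$. In particular they have the same infinitesimal character, which is determined by the two Casimirs of $\sl(2,\CC)$ acting via $(m,\nu)$. By Theorem~\ref{thm:UnitaryDualPGL}, the infinite-dimensional irreducible unitary representations of $\PGL(2,\CC)$ are determined by their infinitesimal character. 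Concretely, $(m,\nu)$ up to $(m,\nu)\sim(-m,-\nu)$, with the unitary principal series and the complementary series separated by whether $\nu\in i\RR$. Hence $\theta(\pi_{k,\lambda})\simeq\tau_{k,\mu/2}$.

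\textbf{Main obstacle.} The step that needs care is excluding covers, i.e.\ showing $\theta(\pi_{k,\lambda})$ factors through $\PGL(2,\CC)$ itself. This should be automatic, since $\PGL(2,\CC)\simeq\PSL(2,\CC)$ has fundamental group $\ZZ/2$, and its double cover $\SL(2,\CC)$ has extra representations with $m$ half-integral. The integrality $m=k\in\ZZ$ obtained in Step~1 puts us in the $\PGL$ case. I also have to exclude finite-dimensional representations, and this is immediate from the fact that the restriction to $P'$ is the infinite-dimensional irreducible $\tau_{m,\nu}|_{P'}$. I would also note, for the injectivity claim in Theorem~\ref{introthmB}, that $(k,\lambda)\mapsto(k,\lambda(H_0)/2)$ is injective on $\ZZ\times i\fraka^*_+$ modulo the stated equivalences.
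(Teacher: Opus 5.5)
Your proposal is correct and is essentially the paper's argument: substitute the eigenvalues of $C$ and $D$ from Lemma~\ref{lem:EigenvaluesOfCandDcomplex} into Theorem~\ref{thm:AbstractTheta} to read off $(m,\nu)=(k,\frac12\lambda(H_0))$, then identify $\theta(\pi_{k,\lambda})$ via the classification in Theorem~\ref{thm:UnitaryDualPGL}. Your extra details are all valid and make explicit what the paper's one-line proof leaves implicit: the constant check $\rho^2/32=\frac18(\frac{rd}{2}-1)^2$ with $\rho=rd-2$, discarding the swapped solutions $(\pm\nu,\pm m)$ that give the same Casimir data, and ruling out genuine representations of $\mathrm{SL}(2,\CC)$ because $m=k$ is an integer.
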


\begin{proof}
    Substituting the eigenvalues of $C$ and $D$ from Lemma~\ref{lem:EigenvaluesOfCandDcomplex} into Theorem~\ref{thm:AbstractTheta} shows the claim.
\end{proof}

\section{Examples}\label{sec:Tables}

We list the Jordan algebras together with the Lie algebras of some of the relevant groups as well as the structure constants $r$ and $d$ that show up in the theta correspondence.

\subsection{$V$ Euclidean}

All entries of the table except $\frakg^\sigma$ can be found in the table in \cite[Chapter V.3]{FK94}. The information about $\frakg^\sigma$ can easily be computed for the classical cases, while in the exceptional case there are only two symmetric subalgebras of $\frakf_4$ and only one of them has the correct dimension.

\begin{center}
\begin{tabular}{||c c c c c c||} 
 \hline
 $V$ & $\mathfrak{co}(V)$ & $\frakg$ & $\frakg^\sigma$ & $r$ & $d$\\
 \hline\hline
 $\Sym(r,\RR)$ & $\sp(r,\RR)$ & $\so(r)$ & $\so(r-1)$ & $r$ & $1$\\ 
 \hline
 $\Herm(r,\CC)$ & $\su(r,r)$ & $\su(r)$ & $\fraks(\fraku(1)+\fraku(r-1))$ & $r$ & $2$\\
 \hline
 $\Herm(r,\HH)$ & $\so^*(4r)$ & $\sp(r)$ & $\sp(1)+\sp(r-1)$ & $r$ & $4$\\
 \hline
 $\RR^{1,n-1}$ & $\so(2,n)$ & $\so(n-1)$ & $\so(n-2)$ & $2$ & $n-2$\\
 \hline
 $\Herm(3,\OO)$ & $\frake_{7(-25)}$ & $\frakf_4$ & $\so(9)$ & $3$ & $8$\\
 \hline
\end{tabular}
\end{center}

\subsection{$V$ non-Euclidean}

In the classical cases this is again an easy computation. In the exceptional case there is only one symmetric space of rank one for $\frakf_{4(4)}$.

\begin{center}
\begin{tabular}{||c c c c c c||} 
 \hline
 $V$ & $\mathfrak{co}(V)$ & $\frakg$ & $\frakg^\sigma$ & $r$ & $d$\\
 \hline\hline
 $M(r,\RR)$ & $\sl(2r,\RR)$ & $\sl(r,\RR)$ & $\fraks(\gl(1,\RR)+\gl(r-1,\RR))$ & $r$ & $2$\\
 \hline
 $\Skew(2r,\RR)$ & $\so(2r,2r)$ & $\sp(r,\RR)$ & $\sp(1,\RR)+\sp(r-1,\RR)$ & $r$ & $4$\\
 \hline
 $\RR^{p,q}$ & $\so(p+1,q+1)$ & $\so(p-1,q)$ & $\so(p-1,q-1)$ & $2$ & $p+q-2$\\
 \hline
 $\Herm(3,\OO_s)$ & $\frake_{7(7)}$ & $\frakf_{4(4)}$ & $\so(4,5)$ & $3$ & $8$\\
 \hline
\end{tabular}
\end{center}

Note that for $V=\RR^{p,q}$ we assume that $p+q$ is even, otherwise no group with Lie algebra $\co(V)=\so(p+1,q+1)$ has a minimal representation (at least for $p,q>2$).

\subsection{$V$ complex}

Both $V$, $\co(V)$, $\frakg$ and $\frakg^\sigma$ are the complexifications of the corresponding objects for Euclidean $V$.

\begin{center}
\begin{tabular}{||c c c c c c||} 
 \hline
 $V$ & $\mathfrak{co}(V)$ & $\frakg$ & $\frakg^\sigma$ & $r$ & $d$\\
 \hline\hline
 $\Sym(r,\CC)$ & $\sp(r,\CC)$ & $\so(r,\CC)$ & $\so(r-1,\CC)$ & $r$ & $1$\\
 \hline
 $M(r,\CC)$ & $\sl(2r,\CC)$ & $\sl(r,\CC)$ & $\fraks(\gl(1,\CC)+\gl(r-1,\CC))$ & $r$ & $2$\\
 \hline
 $\Skew(2r,\CC)$ & $\so(4r,\CC)$ & $\sp(r,\CC)$ & $\sp(1,\CC)+\sp(r-1,\CC)$ & $r$ & $4$\\
 \hline
 $\CC^n$ & $\so(n+2,\CC)$ & $\so(n-1,\CC)$ & $\so(n-2,\CC)$ & $2$ & $n-2$\\
 \hline
 $\Herm(3,\OO)_\CC$ & $\frake_7(\CC)$ & $\frakf_4(\CC)$ & $\so(9,\CC)$ & $3$ & $8$\\
 \hline
\end{tabular}
\end{center}

\bibliographystyle{amsplain}
\bibliography{bibdb}

\bigskip

\textsc{Department of Mathematics, Aarhus University, Ny Munkegade 118, 8000 Aarhus, Denmark}

\textit{E-mail address:} \texttt{frahm@math.au.dk}\\

\textsc{Centre de Recherches Math\'{e}matiques, Universit\'{e} de Montr\'{e}al,
P.O. Box 6128, Centre-ville Station, Montr\'{e}al (Qu\'{e}bec), Canada}

\textit{E-mail address:} \texttt{labriet.quentin@gmail.com}

\end{document}